\numberwithin{equation}{section}
\newcommand{\al}{\alpha}
\newcommand{\be}{\beta}
\newcommand{\de}{\delta}
\newcommand{\De}{\Delta}
\newcommand{\ep}{\epsilon}
\newcommand{\ga}{\gamma}
\newcommand{\Ga}{\Gamma}
\newcommand{\la}{\lambda}
\newcommand{\La}{\Lambda}
\newcommand{\pa}{/\hspace{-.2em}/}
\newcommand{\ot}{\otimes}
\newcommand{\si}{\sigmaup}
\newcommand{\te}{\theta}
\newcommand{\sh}{~\raisebox{-0.08em}{\rotatebox{90}{$\in$}}~}
\newcommand{\shq}{~\raisebox{-0.08em}{\rotatebox{90}{$\in$}$_q$}~}
\newcommand{\osh}{~\raisebox{-0.08em}{\rotatebox{90}{$\in$}$_{-1}$}~}
\newcommand{\mb}[1]{\mbox{\bfseries #1}}
\newcommand{\ms}[1]{\mbox{\sffamily #1}}
\newcommand{\vs}{\varsigmaup}
\newcommand{\s}[1]{{\scriptsize #1}}
\newcommand{\ti}[1]{{\tiny #1}}
\newcommand{\stt}[1]{{\scriptstyle #1}}
\newcommand{\sst}[1]{{\scriptscriptstyle #1}}
\newcommand{\ol}[1]{\overline{#1}}
\newcommand{\us}[1]{\mbox{\upshape #1}}
\newcommand{\lan}{\langle}
\newcommand{\ran}{\rangle}
\newcommand{\lb}{\left(}
\newcommand{\rb}{\right)}
\begin{document}

\newtheorem{theorem}{Theorem}[section]

\newtheorem{lemma}[theorem]{Lemma}

\newtheorem{corollary}[theorem]{Corollary}
\newtheorem{proposition}[theorem]{Proposition}

\theoremstyle{remark}
\newtheorem{remark}[theorem]{Remark}

\newtheorem{definition}[theorem]{Definition}

\newtheorem{example}[theorem]{Example}

%%%%%%%%%%%%%%%%%%%%%%%%%%%%%%%%%%%%%%%%%%%%%%%
%%%%%%%%%%%%%%%%%%%%%%%%%%%%%%%%%%%%%%%%%%%%%%%%%

\title[$q$-symmetric functions and $q$-quasisymmetric functions]{$q$-symmetric functions and $q$-quasisymmetric functions}
\author[Li]{Yunnan Li}
\address{Department of Mathematics, East China Normal University,
Minhang Campus, Dong Chuan Road 500, Shanghai 200241, PR China}
\email{yunnan814@163.com}

\begin{abstract}
In this paper, we construct the $q$-analogue of Poirier-Reutenauer algebras, related deeply with other $q$-combinatorial Hopf algebras. As an application, we use them to realize the odd Schur functions defined in \cite{EK}, and naturally obtain the odd Littlewood-Richardson rule concerned in \cite{Ell}. Moreover, we construct the refinement of the odd Schur functions, called odd quasisymmetric Schur functions, parallel to the consideration in \cite{HLMW1}. All the $q$-Hopf algebras we discuss here provide the corresponding $q$-dual graded graphs in the sense of \cite{BLL}.

\noindent\textit{Keywords:} $q$-Hopf algebra, odd quasisymmetric Schur function, Littlewood-Richardson
\end{abstract}
\date{February 21th, 2012}
\maketitle

\section{Introduction}
The graded Hopf algebra of symmetric functions, $\ms{Sym}$, has many fascinating properties related with several aspects of mathematics, involving the representation theory of symmetric groups, the theory of combinatorial Hopf algebras, the geometry of the Grassmannian, etc. Meanwhile, $\ms{Sym}$ has two of the most important generalizations: the algebra of quasisymmetric functions, $\ms{QSym}$, as the nonsymmetric analogue and the algebra of noncommutative symmetric functions, $\ms{NSym}$ as the noncommutative analogue. $\ms{QSym}$ was first introduced by I. Gessel \cite{Ge} as a source of generating functions for P-partitions, while $\ms{NSym}$ was first studied extensively in \cite{GKL} and proved to be isomorphic to Solomon's descent algebra of symmetric groups. The duality between $\ms{QSym}$ and $\ms{NSym}$ has been concerned in \cite{MR}, where the authors also constructed a remarkable self-dual Hopf algebra structure on the free abelian group generated by permutations $\mathfrak{S}$, now named after them as the Malvenuto-Reutenauer algebra. The MR-algebra deeply relates to many algebras central to algebraic combinatorics. For instance, $\ms{NSym}$ serves as a subalgebra of the MR-algebra and thus $\ms{QSym}$ as a Hopf quotient.

On the other hand, the MR-algebra nicely possesses a Hopf quotient or dually a Hopf subalgebra, called the Poirier-Reutenauer algebra, with a canonical basis indexed by standard Young tableaux \cite{PR}. It's significant to see that when restricting the projection from the MR-algebra to $\ms{QSym}$ on the PR-algebra, it exactly has the image $\ms{Sym}$, with the canonical basis mapped to the Schur functions.

Bearing the interplay above in mind, we want to generalize the whole picture to the $q$-Hopf algebra version. Note that the $q$-Hopf algebras are just a special kind of braided Hopf algebras endowed with the twisted braiding. In \cite{TU}, Thibon and Ung first introduced the algebra of quantum quasisymmetric functions $\ms{QSym}_q$, which also realized as the graded characteristics of finitely generated 0-Hecke algebra $H_N(0)$-modules \cite{KT}. Recently, a new basis for $\ms{QSym}$, called quasisymmetric Schur functions, has been introduced by Haglund, Luoto, Mason van Willigenburg in \cite{HLMW1}. It serves as a refinement of the Schur functions and behaves particularly well when multiplying with the Schur functions \cite{HLMW}.
 As very little papers investigate $\ms{QSym}_q$, we do one step further here by mainly focusing on the odd case, where $\ms{QSym}_{-1}$ is a Hopf super algebra. Specializing the construction of the $q$-PR-algebra to $q=-1$, we realize the odd Schur functions defined in \cite{EK,EKL}.  Meanwhile, we find the odd version of quasisymmetric Schur functions as the refinement of the odd Schur functions, parallel to the original construction in \cite{BLW,HLMW1}.

 On the other hand, Bergeron, Lam and Li considered the following composition of two constructions in \cite{BLL}: filtered tower of algebras $\longrightarrow$ $q$-combinatorial Hopf algebras $\longrightarrow$ $q$-dual graded graphs.
As a result, we can adopt their methods to realize plenty of $q$-analogues of well-known dual graded graphs, while the representation aspect of them still need to be investigated.

The paper is organized as follows.
In Section 2 we first introduce $q$-symmetric functions, $\ms{QSym}_q$ and the dual $\ms{NSym}$, then relate them from a canonical bilinear form explicitly. In Section 3 we define the $q$-analogues of the MR-algebras. In Section 4 we construct the $q$-analogues of the PR-algebras, thus bring in the combinatorics of Young tableaux. Consequently, we can build the relation diagram of all the $q$-Hopf algebras above generalizing the classical case. In Section 5 we realize the odd Schur functions from the $q$-PR algebras with the odd Littlewood-Richardson rule derived naturally. In Section 6 we construct the odd quasisymmetric Schur functions and thus the odd version of the Pieri rules, the Littlewood-Richardson rules, etc.
In Section 7 we use the construction mentioned in \cite{BLL} to obtain the $q$-dual graded graphs associated with all the $q$-Hopf algebras we concern.

\section{Introduction to $q$-(quasi)symmetric functions}
\subsection{Compositions and tableaux}
Denote $\mathbb{Z}^+$ the set of positive integers,  $[n]=\{1,\dots,n\},~n\in\mathbb{N}$ and $[0]=\emptyset$ for convenience. Let $C(n)$ be the \textit{composition} set of $n\in\mathbb{Z}^+$, consisting of those ordered tuples of positive integers summed to be $n$. Take $C=\bigcup_{n\in\mathbb{Z}^+}C(n)$. Write $\al\vDash n$ when $\al\in C(n)$. For $\al=(\al_1,\dots,\al_r)\in C$, all the entries in $\al$ are \textit{parts} of it. Take the \textit{length} $\ell(\al)$ of $\al$ to be $r$ and the \textit{weight} $|\al|$ of $\al$ to be $\sum\limits_{i=1}^r\al_i$. Meanwhile, the \textit{descent set} $\us{des}(\al)$ of $\al$ is defined by
\[\us{des}(\al)=\{\al_1,\al_1+\al_2,\dots,\al_1+\cdots\al_{r-1}\}.\]
Conversely, for any $S\subseteq [n-1],~n\in\mathbb{Z}^+$, we can associate a composition $c(S)\vDash n$ with $S$, defined by $c(S)=(s_1,s_2-s_1,\cdots,s_{r-1}-s_{r-2},n-s_{r-1})$,
where $S=\{s_1,\dots,s_{r-1}\},~0<s_1<\cdots<s_{r-1}<n$. We describe such subset $S$ by $\{s_1<\cdots<s_{r-1}\}$ for simplicity.

Besides, let $C_w(n)$ be the \textit{weak composition} set of $n\in\mathbb{Z}^+$, consisting of all the ordered tuples of nonnegative integers with the sum equal to $n$. Take $C_w=\bigcup_{n\in\mathbb{Z}^+}C_w(n)$. For any $\ga\in C_w(n)$, its \textit{collapse} $\al(\ga)\vDash n$ is obtained by removing all the zero parts of $\ga$.

Now we can endow the composition set $C$ with several partial orders.
For $\al,\be\vDash n$, if we can obtain $\al$ by adding together adjacent parts of $\be$, then we say that $\be$ is a \textit{refinement} of $\al$, denoted $\al\preceq\be$ (or $\be\succeq\al$).  Note that one can also endow $C_w$ with such refinement order. Note that for any $S,S'\subseteq [n-1]$, $c(S)\preceq c(S')$ if and only if $S\subseteq S'$.

Given $\al=(\al_1,\dots,\al_r),~\be=(\be_1,\dots,\be_s)\in C$, we define two operations on them: the \textit{concatenation} $\al\be=(\al_1,\dots,\al_r,\be_1,\dots,\be_s)$ and the \textit{near concatenation} $\al\vee\be=(\al_1,\dots,\al_r+\be_1,\dots,\be_s)$. Meanwhile, we define the \textit{complement composition} $\al^c$ of $\al$ as
\[\al^c=(1^{\al_1})\vee(1^{\al_2})\vee\cdots\vee(1^{\al_r}),\]
 the \textit{reverse composition} of $\al$ as $\ol{\al}=(\al_r,\dots,\al_1)$ and the associated partition $\tilde{\al}$ of $\al$ obtained by rearranging the parts of $\al$ in weakly decreasing order. Note that $\us{des}(\al^c)=[n-1]\backslash\us{des}(\al),
 ~\us{des}(\ol{\al})=\{i\in[n-1]:n-i\in\us{des}(\al)\}$
  for any $\al\vDash n$.

For $w\in\mathfrak{S}_n$, we can also define the \textit{descent set} $\us{des}(w)$ of $w$ by
\[\us{des}(w)=\{i\in[n-1]:w(i)>w(i+1)\},\]
and thus the associated composition
$c(w)=c(\us{des}(w))\vDash n$.

We assume that the readers are familiar with the terminology of Young tableaux, for which we follow the standard textbook \cite{Ful}. We denote $\mathcal {P}$ the set of partitions and write $\la\vdash n$ when $\la\in\mathcal {P}\cap C(n)$. Let $\us{SSYT}=\bigcup_{n\geq0}\us{SSYT}_n$ be the set of \textit{semistandard tableaux} (or simply \textit{tableaux}), where $\us{SSYT}_0=\emptyset$ and $\us{SSYT}_n=\bigcup\limits_{\la\vdash n}\us{SSYT}(\la),~n\in\mathbb{Z}^+$ with $\us{SSYT}(\la)$ consisting of all \textit{semistandard tableaux} of \textit{shape} $\la$. Similarly, let $\us{SYT}=\bigcup_{n\geq0}\us{SYT}_n$ be the set of \textit{standard tableaux}. We also need the notation $\us{SYT}(\la/\mu)$ (resp. $\us{SSYT}(\la/\mu)$) for the set of (semi)standard tableaux of skew shape $\la/\mu$ and $\us{sh}(T)$ for the shape of $T\in\us{SSYT}$. Sometimes we use $\us{SSYT}[m]$ to emphasize the tableaux with entries in $[m]$ as well.

\subsection{The definition of $q$-(quasi)symmetric functions}
First we introduce the main study object, the \textit{$q$-(quasi)symmetric functions}, defined by Thibon and Ung in \cite{TU}.  Though there are plenty of papers on noncommutative and quasisymmetric functions, very little treat the quantum case. Here are other papers involving some material for such direction, \cite{DHT,EK,KT}.

We work over a commutative ring $k$ with unity and fix $q\in k$ invertible if without further emphasis, thus $\mathbb{Z}[q,q^{-1}]$ is a nice candidate. Consider the quantum affine $n$-space \[A_q^{n|0}=k\lan x_1,\dots,x_n\ran/(x_ix_j-qx_jx_i,i>j).\]
When $n=\infty$, we take
\[A_q^{\infty|0}=k\lan\lan x_1,x_2,\dots\ran\ran/(x_ix_j-qx_jx_i,i>j),\]
abbreviated as $A_q$.
Define the \textit{$q$-monomial quasisymmetric function}
\[M_\al:=M_\al(x)=\sum\limits_{\ga\in C_w\atop\al(\ga)=\al}x^\ga=\sum\limits_{i_1<\cdots< i_r}x_{i_1}^{\al_1}\cdots x_{i_r}^{\al_r},\]
where $\al=(\al_1,\cdots,\al_r)$ varies over the composition set $C$ and $x^\ga:=\prod\limits_{i\geq1}^{\longrightarrow} x_i^{\ga_i}$. Note that $\{M_\al\}_{\al\in C}$ spans a subalgebra of $A_q$ as a $k$-basis, called the algebra of \textit{$q$-quasisymmetric functions} and denoted $\ms{QSym}_q$. Define the $q$-\textit{quasishuffle product} $\bowtie_q$ on the  $\mathbb{Z}[q]$-module freely generated by $C$ recursively as follows,
\[\begin{array}{l}
\al\bowtie_q\be=(\al_1)(\al'\bowtie_q\be)
+q^{\be_1|\al|}(\be_1)(\al\bowtie_q\be')+q^{\al_1\be_1}(\al_1+\be_1)(\al'\bowtie_q\be'),\\
\al\bowtie_q\emptyset=\emptyset\bowtie_q\al=\al,
\end{array}\]
where $\al=(\al_1)\al',~\be=(\be_1)\be'\in C$. Then the multiplication rule of $M_\al$ can be described by $\bowtie_q$ as follows,
\begin{equation}\label{mom}
M_\al M_\be=\sum_\ga f_\ga(q)M_\ga,
\end{equation}
where $\al\bowtie_q\be=\sum_\ga f_\ga(q)\ga$. For the further detail of this interesting product, one can refer to \cite{Hof,JRZ}. Meanwhile, we need another important basis of $\ms{QSym}_q$, the \textit{$q$-fundamental quasisymmetric functions}, as follows,
\[F_\al:=F_\al(x)=\sum\limits_{i_1\leq\cdots\leq i_n\atop i_k<i_{k+1}\us{ \ti{if} }k\in\us{\ti{des}}(\al)}x_{i_1}\cdots x_{i_n},\]
where $\al=(\al_1,\cdots,\al_r)$ varies over the composition set $C(n)$ for any $n\geq0$. Note that $F_\al=\sum\limits_{\be\succeq \al}M_\be$, and thus  $M_\al=\sum\limits_{\be\succeq \al}(-1)^{\ell(\be)-\ell(\al)}F_\be$ by the M$\ddot{\mbox{o}}$bius inversion.

In order to define all kinds of $q$-analogue of symmetric functions as the classical case, one can use the following generating function in $A_q[[t]]$,
\begin{equation}\label{com}
h(x,t)=\sum_{n\geq0}h_n(x)t^n=\prod\limits_{i\geq1}^{\longrightarrow}\dfrac{1}{1-x_it}.
\end{equation}
It gives
\[h_n:=h_n(x)=\sum\limits_{i_1\leq\cdots\leq i_n}x_{i_1}\cdots x_{i_n},~n\in\mathbb{Z}^+,\]
which are the \textit{$q$-analogues of complete symmetric functions}.

The generating function (\ref{com}) has the inverse
\begin{equation}\label{ele}
\begin{split}
\prod\limits_{i\geq1}^{\longleftarrow}(1-x_it)&=\sum\limits_{n\geq0}\lb\sum\limits_{i_1>\cdots> i_n}(-1)^nx_{i_1}\cdots x_{i_n}\rb t^n\\
&=\sum\limits_{n\geq0}(-1)^nq^{\tfrac{n(n-1)}{2}}\lb\sum\limits_{i_1<\cdots<i_n}x_{i_1}\cdots x_{i_n}\rb t^n.
\end{split}
\end{equation}
It gives
\[e_n:=e_n(x)=\sum\limits_{i_1<\cdots<i_n}x_{i_1}\cdots x_{i_n},~n\in\mathbb{Z}^+,\]
which are the \textit{$q$-analogues of elementary symmetric functions}, and the relation
\begin{equation}\label{ce}
h_0=e_0=1,~\sum\limits_{k=0}^n(-1)^kq^{\tfrac{k(k-1)}{2}}e_kh_{n-k}=0,n\in\mathbb{Z}^+.
\end{equation}
Now define the algebra of \textit{$q$-symmetric functions} as the subalgebra
of $\ms{QSym}_q$ generated by $\{h_n\}_{n\in\mathbb{Z}^+}$, and denote it as $\ms{Sym}_q$.

On the other hand, given the alphabet $A=\{a_1,a_2,\dots\}$, one has the free associative algebra $\mathcal {F}=k\lan\lan a_1,a_2,\dots\ran\ran$. Similarly define the following generating functions in $\mathcal {F}[[t]]$,
\begin{equation}\label{ncom}
H(A,t)=\sum_{n\geq0}H_n(A)t^n=\prod\limits_{i\geq1}^{\longrightarrow}\dfrac{1}{1-a_it}.
\end{equation}
and
\begin{equation}\label{nele}
E(A,t)=\sum_{n\geq0}(-1)^nE_n(A)t^n=\prod\limits_{i\geq1}^{\longleftarrow}(1-a_it).
\end{equation}
Then $\{H_n(A)\}_{n\in\mathbb{Z}^+}$ (or $\{E_n(A)\}_{n\in\mathbb{Z}^+}$) generates a subalgebra of $\mathcal {F}$, called the algebra of \textit{noncommutative symmetric functions} and denoted as $\ms{NSym}$. Similarly, we have
\[H_0=E_0=1,~\sum\limits_{k=0}^n(-1)^kE_kH_{n-k}=0,~n\in\mathbb{Z}^+.\]
 Let $H_\al=H_{\al_1}\dots H_{\al_r},E_\al=E_{\al_1}\dots E_{\al_r},~\al=(\al_1,\cdots,\al_r)\vDash n$.
 Note that $\{H_\al\}_{\al\in C}$ (or $\{E_\al\}_{\al\in C}$) forms a basis of $\ms{NSym}$, hence there exists a canonical bilinear form
  \begin{equation}\label{bi}
  \lan\cdot,\cdot\ran:\ms{QSym}_q\times\ms{NSym}\rightarrow k,~\lan M_\al,H_\be\ran=\de_{\al,\be},~\al,\be\in C.
  \end{equation}
It can be canonically extended as a bilinear form on $\ms{QSym}_q^{\ot n}\times\ms{NSym}^{\ot n}$ for all $n\in\mathbb{Z}^+$. Meanwhile, we have another basis of $\ms{NSym}$, the \textit{noncommutative ribbon Schur functions},
\[R_\al:=\sum_{\be\preceq \al}(-1)^{\ell(\al)-\ell(\be)}H_\be,\]
such that $\lan F_\al,R_\be\ran=\de_{\al,\be},~\al,\be\in C$.

\subsection{$q$-Hopf algebra duality between $\ms{QSym}_q$ and $\ms{NSym}$}
The \textit{graded $q$-Hopf algebras} are a kind of $\mathbb{Z}$-graded colored Hopf algebras defined in \cite[\S10.5]{Mon}. Now endow $\ms{QSym}_q$ with the graded $q$-Hopf algebra structure as follows,
\[\ms{QSym}_q=\bigoplus_{n\in\mathbb{N}}(\ms{QSym}_q)_n,\]
where $(\ms{QSym}_q)_n=\mbox{span}_k\{M_\al|\al\vDash n\}$ is the $n$th graded component, with
the coproduct $\De'$ defined by
\begin{equation}\label{mo}
\De'(M_\al)=\sum\limits_{\be\ga=\al}M_\be\ot M_\ga.
\end{equation}
Consequently, one can see that
\begin{equation}\label{fun}
\De'(F_\al)=\sum\limits_{\be\ga=\al}F_\be\ot F_\ga+\sum\limits_{\be\vee\ga=\al}F_\be\ot F_\ga.
\end{equation}
Note that for $F,G\in\ms{QSym}_q$, we have
\[\De'(FG)=\sum q^{|F_{(2)}||G_{(1)}|}F_{(1)}G_{(1)}\ot F_{(2)}G_{(2)},\]
where $\De'(F)=\sum F_{(1)}\ot F_{(2)},\De'(G)=\sum G_{(1)}\ot G_{(2)}$, and $|\cdot|$ is the evaluation of grading on homogeneous elements.

Using the canonical bilinear form (\ref{bi}), one can endow $\ms{NSym}$ with the graded $q$-Hopf algebra structure which is graded dual to $\ms{QSym}_q$. Namely, $\ms{NSym}=\bigoplus_{n\in\mathbb{N}}\ms{NSym}_n$, where $\ms{NSym}_n=\mbox{span}_k\{H_\al|\al\vDash n\}$ is the $n$th graded component. As $\ms{NSym}$ is freely generated by $\{H_n(A)\}_{n\in\mathbb{Z}^+}$, define the coproduct $\De_q$ on $\ms{NSym}$ by
\begin{equation}\label{coq}
\De_q(H_n)=\sum\limits_{k=0}^nH_k\ot H_{n-k},
\end{equation}
and the rule
\begin{equation}\label{rule}
\De_q(HH')=\sum q^{|H_{(2)}||H'_{(1)}|}H_{(1)}H'_{(1)}\ot H_{(2)}H'_{(2)},
\end{equation}
where $H,H'\in\ms{NSym},~\De_q(H)=\sum H_{(1)}\ot H_{(2)},\De_q(H')=\sum H'_{(1)}\ot H'_{(2)}$, and $|\cdot|$ is again the evaluation of grading on homogeneous elements. In order to simplify some coefficients, we introduce the following bicharacter $\te$ on $\mathbb{Z}^n$
\[\te:\mathbb{Z}^n\times\mathbb{Z}^n\rightarrow k^\times,\te(
\al,\be)=q^{\sum_{i>j}\al_i\be_j},~\al=(\al_1,\dots,\al_n),~\be=(\be_1,\dots,\be_n)\in\mathbb{Z}^n.\] In particular, we have
\[\De_q(H_\ga)=\sum\te(\ga',\ga'')H_{\ga'}\ot H_{\ga''},\]
where the sum is over all ordered pair $(\ga',\ga'')\in \mathbb{N}^{\ell(\ga)}\times\mathbb{N}^{\ell(\ga)}$, such that $\ga'+\ga''=\ga$, and $H_{\ga'},H_{\ga''}$ are interpreted naturally as before. Now one can see that
\begin{proposition}\label{qdual}
$(\ms{QSym}_q,\cdot,\De')$ and $(\ms{NSym},\cdot,\De_q)$ are graded dual to each other as $q$-Hopf algebras with respect to the canonical bilinear form (\ref{bi}).
\end{proposition}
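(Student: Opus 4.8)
The plan is to realize the canonical form (\ref{bi}) as a Hopf pairing between the two $q$-Hopf algebras in the sense of \cite[\S10.5]{Mon}, exploiting that $\{M_\al\}_{\al\in C}$ and $\{H_\al\}_{\al\in C}$ are dual bases under it. Since both algebras are connected and $\mathbb{N}$-graded by weight, duality amounts to the two structure-constant identities expressing ``product is transpose of coproduct'' on these bases, namely
\begin{equation*}
\lan M_\al M_\be, H_\ga\ran=\lan M_\al\ot M_\be,\De_q(H_\ga)\ran,\qquad
\lan\De'(M_\al),H_\be\ot H_\ga\ran=\lan M_\al,H_\be H_\ga\ran,
\end{equation*}
for all $\al,\be,\ga\in C$, where on each side I use the canonical (term-by-term) extension of (\ref{bi}) to $\ms{QSym}_q^{\ot2}\times\ms{NSym}^{\ot2}$. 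The unit and counit conditions are immediate, since $M_\emptyset=1$, $H_\emptyset=1$ and the form is the Kronecker delta; and because the bases are dual, the two displayed identities already give duality in both directions. The antipode compatibility then need not be checked by hand: both bialgebras are connected and graded, hence each carries a unique antipode (the convolution inverse of the identity), and the two identities force $\lan S(M_\al),H_\be\ran=\lan M_\al,S(H_\be)\ran$.

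The second identity is formal. The product of $\ms{NSym}$ is concatenation, so $H_\be H_\ga=H_{\be\ga}$ and its left side is $\de_{\al,\be\ga}$; on the other hand $\De'(M_\al)=\sum_{\mu\nu=\al}M_\mu\ot M_\nu$ is deconcatenation, and pairing term by term against $H_\be\ot H_\ga$ selects exactly $\mu=\be,\,\nu=\ga$, again giving $\de_{\al,\be\ga}$. Thus $\De'$ is literally the transpose of concatenation, with no $q$-powers entering. For the first identity I would read both sides as explicit elements of $k$. By (\ref{mom}) the left side is the coefficient $f_\ga(q)$ of $\ga$ in the $q$-quasishuffle $\al\bowtie_q\be$; expanding $\De_q(H_\ga)=\sum\te(\ga',\ga'')H_{\ga'}\ot H_{\ga''}$ and pairing, the right side collapses to $\sum\te(\ga',\ga'')$ over all splittings $\ga'+\ga''=\ga$ into weak compositions of length $\ell(\ga)$ with collapses $\al(\ga')=\al$ and $\al(\ga'')=\be$. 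Hence the whole proposition reduces to the single combinatorial identity
\begin{equation*}
[\ga]\,(\al\bowtie_q\be)=\sum_{\substack{\ga'+\ga''=\ga\\ \al(\ga')=\al,\ \al(\ga'')=\be}}\te(\ga',\ga''),
\end{equation*}
where $[\ga]$ denotes extraction of the coefficient of $\ga$.

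I expect this identity to be the main obstacle, and I would prove it by induction on $\ell(\ga)$ (equivalently on $\ell(\al)+\ell(\be)$) driven by the recursive definition of $\bowtie_q$. Peeling off the first part $\ga_1$ of $\ga$ partitions the right-hand sum into three disjoint cases, according to whether, in a splitting, $\ga_1$ comes entirely from the first factor (so $\al$ loses a first part $\al_1=\ga_1$), entirely from the second (so $\be$ loses $\be_1=\ga_1$), or arises as a first part of each (the merging case $\ga_1=\al_1+\be_1$); these match precisely the three summands $(\al_1)(\al'\bowtie_q\be)$, $(\be_1)(\al\bowtie_q\be')$ and $(\al_1+\be_1)(\al'\bowtie_q\be')$ of the recursion. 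The delicate step, and the crux of the argument, is the $q$-bookkeeping: one must verify that restricting the bicharacter $\te(\ga',\ga'')=q^{\sum_{i>j}\ga'_i\ga''_j}$ to each case and factoring out the first-coordinate contribution reproduces exactly the $q$-prefactor attached to the corresponding term of the recursion (the factors $1$, $q^{\be_1|\al|}$ and $q^{\al_1\be_1}$ recorded there). Granting this matching of exponents, applying the inductive hypothesis to $\al'\bowtie_q\be$, $\al\bowtie_q\be'$ and $\al'\bowtie_q\be'$ closes the induction and establishes the combinatorial identity, hence the proposition.
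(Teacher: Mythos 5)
The paper states this proposition without proof (it is introduced with ``Now one can see that''), so there is no argument of the author's to compare against; your overall strategy---reduce duality to the two adjointness identities on the dual bases $\{M_\al\}$, $\{H_\al\}$, dispose of the deconcatenation/concatenation side trivially, and reduce the product side to a combinatorial identity---is the natural one, and the identity you isolate, $[\ga]\lb M_\al M_\be\rb=\sum_{\ga'+\ga''=\ga,\ \al(\ga')=\al,\ \al(\ga'')=\be}\te(\ga',\ga'')$, is indeed true. The remarks on the unit, counit and the automatic antipode compatibility for connected graded bialgebras are also fine.

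The genuine gap is exactly at the step you defer with ``granting this matching of exponents'': it is not a routine bookkeeping check, and if you carry it out against the recursion for $\bowtie_q$ as printed in the paper it fails in the merge case. Take $\al=\be=(1)$ and $\ga=(2)$: the third term of the recursion carries the prefactor $q^{\al_1\be_1}=q$, so $[\ga](\al\bowtie_q\be)=q$, whereas your bicharacter sum has the single term $\te((1),(1))=q^0=1$. The bicharacter sum is the correct value: computing directly in $A_q$, $M_{(1)}^2=\sum_{i,j}x_ix_j=M_{(2)}+(1+q)M_{(1,1)}$, with coefficient $1$ on $M_{(2)}$. (In general the merge term should carry $q^{\be_1|\al'|}$, the cost of commuting the $\be_1$ letters past the tail $\al'$ of $\al$, not $q^{\al_1\be_1}$.) So routing the argument through (\ref{mom}) and the printed recursion leaves you trying to establish a matching of exponents that is false as stated. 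The repair is to bypass $\bowtie_q$ entirely: expand $M_\al M_\be$ in $A_q$ as a sum over interleavings of the index sets $i_1<\cdots<i_r$ and $j_1<\cdots<j_s$, and observe that normal-ordering the monomial $x_{i_1}^{\al_1}\cdots x_{i_r}^{\al_r}x_{j_1}^{\be_1}\cdots x_{j_s}^{\be_s}$ costs exactly $q^{\sum_{i_a>j_b}\al_a\be_b}=\te(\ga',\ga'')$, where $\ga',\ga''$ are the weak compositions recording the positions of the $\al$- and $\be$-parts inside the resulting $\ga$. This proves your combinatorial identity in one step, with no induction, and completes the proof.
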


From the formula (\ref{mo}) of $\De'$ on $M_\al$'s and the obvious identity $h_n=\sum\limits_{\al\vDash n}M_\al$, we have
\begin{equation}\label{co}
\De'(h_n)=\sum\limits_{k=0}^n h_k\ot h_{n-k}.
\end{equation}
It means the coproduct formulas (\ref{coq}) and (\ref{co}) are compatible. On the other hand, $\{H_n\}_{n\in\mathbb{Z}^+}$ freely generates $\ms{NSym}$, which admits the following homomorphism of graded $q$-Hopf algebras
\[\phi:\ms{NSym}\rightarrow \ms{QSym}_q,~\phi(H_n)=h_n,~n\in\mathbb{Z}^+,\]
referred as the \textit{forgetful map}. Note that $\phi(E_n)=q^{n(n-1)/2}e_n$. For generic $q$, Thibon and Ung in \cite{TU} pointed out that $\phi$ is an isomorphism. Indeed, if denote $e^{\al}=e_{\al_1}\cdots e_{\al_r}$ for $\al=(\al_1,\dots,\al_r)\in C$, then the map $F_\al\mapsto e^{\al^c}$ is invertible. We also denote $r_\al=\phi(R_\al)$ as the $q$-\textit{ribbon Schur function}.

Now define a bilinear form $(\cdot,\cdot)$ on $\ms{NSym}$ by
\begin{equation}\label{bif}
  (H_\al,H_\be)=\lan\phi(H_\al),H_\be\ran=\lan h_\al,H_\be\ran,~\al,\be\in C.
\end{equation}

Note that the bilinear form $(\cdot,\cdot)$ first appears in \cite[3]{TU}, and serves as the basic tool directly in \cite{EK}. The whole setting is quite similar to Lusztig's construction of the algebra $\mb{f}$ \cite[Chapter 1]{Lus}. Let's explain the nice properties of such bilinear form in details as follows.

Using the duality between $\ms{QSym}_q$ and $\ms{NSym}$, we can compute that
\begin{equation}\label{va}
\begin{split}
(H_\al,H_\be)&=\lan h_\al,H_\be\ran=\lan h_{\al_1}\ot\cdots\ot h_{\al_k},\De_q^{(k-1)}(H_\be)\ran\\
&=\sum\limits_{\be^{(1)},\dots,\be^{(k)}\in \mathbb{N}^l \atop \be^{(1)}+\cdots+\be^{(k)}=\be,~|\be^{(i)}|=\al_i}\prod\limits_{1\leq i<j\leq k}\te(\be^{(i)},\be^{(j)})
=\begin{cases}\sum\limits_{w\in\mathfrak{S}_{\al,\be}}q^{\ell(w)},\mbox{ if }|\al|=|\be|,\\0,\mbox{ otherwise}.\end{cases}
\end{split}
\end{equation}
for any $\al,\be\in C$ and $k=\ell(\al),~l=\ell(\be)$, where $\mathfrak{S}_{\al,\be}$ is the set of minimal representatives of the double coset of the Young subgroups $\mathfrak{S}_{\al},\mathfrak{S}_{\be}$ for $\mathfrak{S}_n$, $n=|\al|=|\be|$.

Note that $\mathfrak{S}_{\al,\be}=\{w\in\mathfrak{S}_n:c(w^{-1})\preceq\al,~c(w)\preceq\be\}$, thus by the formula (\ref{va}) one can easily deduce that
\begin{equation}\label{rib}
(R_\al,R_\be)=\sum\limits_{c(w^{-1})=\al\atop c(w)=\be}q^{\ell(w)}.
\end{equation}

For any $w\in\mathfrak{S}_{\al,\be}$, we can present it by the following diagram
\[\xy 0;/r.2pc/:
(0,0)*{\scalebox{0.4}{\includegraphics{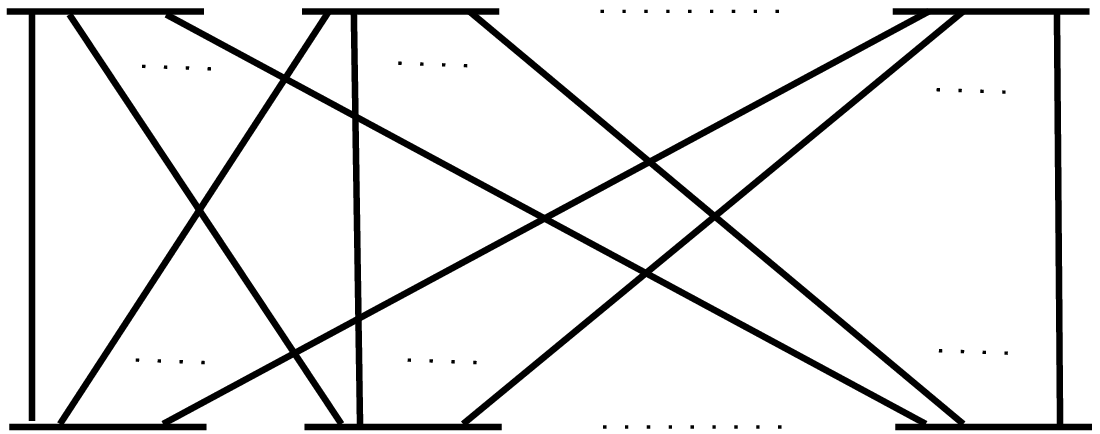}}};
(-21,12)*{\sst{\al_1}};(-7,12)*{\sst{\al_2}};(21,12)*{\sst{\al_k}};
(-21,-13)*{\sst{\be_1}};(-7,-13)*{\sst{\be_2}};(21,-13)*{\sst{\be_l}};
(-27.5,2)*{\sst{\be^{(1)}_1}};(-21.5,2)*{\sst{\be^{(1)}_2}};(6.5,-7)*{\sst{\be^{(1)}_l}};
\endxy\]
and the length $\ell(w)$ of $w$ equals the number of crossings in its corresponding diagram. Since we can put the diagram upside down to get $w^{-1}\in\mathfrak{S}_{\be,\al}$, whose length remains the same, we know that the form $(\cdot,\cdot)$ is symmetric by the formula (\ref{va}). Now we list some useful properties of the bilinear form $(\cdot,\cdot)$.
\begin{proposition}\label{bp}
(1) $(\cdot,\cdot)$ is symmetric and the radical is $\us{Ker}\phi=\bigoplus_{n\in\mathbb{Z}^+}\us{Ker}\phi_n$.

(2) The graded components of $\ms{NSym}$ are orthogonal with respect to $(\cdot,\cdot)$.

(3) $\us{Im}\phi=\ms{Sym}_q$ is the orthogonal complement of $\us{Ker}\phi$ with respect to the canonical pairing $\lan\cdot,\cdot\ran$, thus $(\cdot,\cdot)$ induces a nondegenerate bilinear form on $\ms{Sym}_q$.

(4) Since $\mathbb{Z}[q,q^{-1}]$ is a PID, $\ms{Sym}_q$ is still a free $\mathbb{Z}[q,q^{-1}]$-module, expressed as the quotient of the free $\mathbb{Z}[q,q^{-1}]$-module $\ms{NSym}$ by the radical of $(\cdot,\cdot)$.
\end{proposition}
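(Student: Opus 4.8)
The plan is to dispatch the four claims in order, using repeatedly that on each graded component $\lan\cdot,\cdot\ran$ is a perfect pairing with dual bases $\{M_\al\}_{|\al|=n}$ and $\{H_\al\}_{|\al|=n}$, together with the symmetry of $(\cdot,\cdot)$ and formula (\ref{va}) already established. Part (2) drops out of (\ref{va}) at once: $(H_\al,H_\be)=0$ unless $|\al|=|\be|$, and as $\{H_\al\}_{|\al|=n}$ spans $\ms{NSym}_n$, elements of distinct graded components are orthogonal. For part (1) only the radical needs to be pinned down, symmetry being the diagram computation preceding the statement. I would note that $x$ lies in the radical iff $\lan\phi(x),y\ran=(x,y)$ vanishes for every $y\in\ms{NSym}$; since $\lan\cdot,\cdot\ran$ is perfect (from $z=\sum_\al c_\al M_\al$ one reads off $c_\be=\lan z,H_\be\ran$, so $\lan z,\ms{NSym}\ran=0$ forces $z=0$), this says precisely $\phi(x)=0$. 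Thus the radical is $\us{Ker}\phi$, which equals $\bigoplus_n\us{Ker}\phi_n$ because $\phi$ preserves the grading.

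For part (3) the inclusion $\us{Im}\phi\subseteq(\us{Ker}\phi)^\perp$ is immediate: for $x\in\ms{NSym}$ and $y\in\us{Ker}\phi$ one has $\lan\phi(x),y\ran=(x,y)=(y,x)=0$ by symmetry and part (1). For the reverse inclusion I would work one graded component at a time, over a field $k$, and count dimensions: the perfect pairing gives $\dim(\us{Ker}\phi_n)^\perp=\dim(\ms{QSym}_q)_n-\dim\us{Ker}\phi_n$, while rank--nullity gives $\dim\us{Im}\phi_n=\dim\ms{NSym}_n-\dim\us{Ker}\phi_n$; since $\dim(\ms{QSym}_q)_n=\dim\ms{NSym}_n=2^{n-1}$, the two agree and the inclusion is forced to be an equality. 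The induced form on $\ms{Sym}_q$ is then nondegenerate because, under the isomorphism $\us{Im}\phi\cong\ms{NSym}/\us{Ker}\phi$ afforded by $\phi$, passing to the quotient by the radical removes exactly the degeneracy of $(\cdot,\cdot)$.

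Part (4) is then formal: the radical of a bilinear form over a domain is saturated, so $\ms{NSym}/\us{Ker}\phi$ is torsion-free and hence free over the PID $\mathbb{Z}[q,q^{-1}]$, while $\ms{Sym}_q=\us{Im}\phi$, being a submodule of the free module $\ms{QSym}_q$, is free as well; the first isomorphism theorem identifies the two as $\ms{Sym}_q\cong\ms{NSym}/\mathrm{rad}(\cdot,\cdot)$. The only step beyond bookkeeping is the reverse inclusion in (3), and the point to watch is that it is genuinely a field-level (equivalently, rank) statement: over $\mathbb{Z}[q,q^{-1}]$ the image $\us{Im}\phi$ is only a full-rank sublattice of $\ms{QSym}_q$ (indeed $\phi$ is injective there, yet $\us{Im}\phi$ need not be saturated), so the equality $\us{Im}\phi=(\us{Ker}\phi)^\perp$ and the nondegeneracy it yields are cleanest read through the dimension count above, whereas (4) records the separate integral fact that the modules in play are free. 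I therefore expect this field-versus-ring distinction, rather than any single computation, to be the main thing to handle with care.
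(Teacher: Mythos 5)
The paper offers no proof of Proposition \ref{bp} at all: only the symmetry assertion in (1) is justified, by the upside-down-diagram argument for (\ref{va}) in the paragraph immediately preceding the statement, and the remaining three parts are simply listed. Your write-up is therefore an independent reconstruction rather than a variant of the author's argument, and as such it is correct and uses exactly the ingredients the surrounding text makes available: (2) falls out of (\ref{va}); the identification of the radical with $\us{Ker}\phi$ in (1) rests on the right fact, namely that $\lan\cdot,\cdot\ran$ is nondegenerate in its first argument because $\{M_\al\}$ and $\{H_\al\}$ are dual bases; the inclusion $\us{Im}\phi\subseteq(\us{Ker}\phi)^\perp$ in (3) is the one-line symmetry computation; and (4) is the first isomorphism theorem together with saturation of the radical. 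Your dimension count for the reverse inclusion in (3) (rank--nullity against $\dim(\ms{QSym}_q)_n=\dim\ms{NSym}_n=2^{n-1}$) is the natural way to finish over a field.

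The one point you flag but do not close is also the only place where the proposition has real content beyond bookkeeping: the equality $(\us{Ker}\phi)^\perp=\us{Im}\phi$ over the integers. The paper later uses (3) integrally (in Theorem \ref{osy}, to conclude $\vs_T\in\ms{OSym}$ from $\vs_T\in(\us{Ker}\phi)^\perp$ over $\mathbb{Z}$ at $q=-1$), and there the rank count only shows that $\us{Im}\phi_n$ sits with finite index inside the saturated lattice $(\us{Ker}\phi_n)^\perp$; one still needs saturation of the image, which neither your proposal nor the paper supplies. Two smaller remarks: $\mathbb{Z}[q,q^{-1}]$ is not a PID (it is a two-dimensional UFD), so the literal reading of (4) — which you reproduce — only makes sense after specializing $q$ to a value in a genuine PID such as $\mathbb{Z}$ at $q=\pm1$; and over $\mathbb{Z}[q,q^{-1}]$ itself the radical is in fact zero (since $\phi$ becomes an isomorphism over $\mathbb{Q}(q)$ and $\ms{NSym}$ is free), so (1), (3) and (4) are only non-vacuous after such a specialization. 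These are defects of the statement as printed rather than of your argument, but they are worth recording since your proof inherits them.
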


From (3), we know that for any $F\in\ms{QSym}_q$, $F$ lies in $\ms{Sym}_q$ if and only if $F\in\us{Ker}\phi^\perp$. However, the radical of $(\cdot,\cdot)$ on $\ms{NSym}$ is quite ruleless when $q$ is an algebraic integer. The results known so far are only when $q=\pm1$. If $q=1$, then $\us{Ker}\phi$ is the ideal generated by $H_nH_m-H_mH_n,~n,m\in\mathbb{N}$, which back to the usual symmetric functions. While in the odd case, we know that
\begin{proposition}[{\cite[Prop. 2.9.]{EK}}]
$\us{Ker}\phi$ is the ideal generated by
\begin{equation}
\label{odd}\begin{array}{l}
 H_nH_m-H_mH_n,\mbox{ if }n+m\mbox{ even,}\\
 H_nH_m+(-1)^nH_mH_n-(-1)^nH_{n+1}H_{m-1}-H_{m-1}H_{n+1},\mbox{ if }n+m\mbox{ odd.}
\end{array}
\end{equation}
\end{proposition}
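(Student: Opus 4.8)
Write $I$ for the two-sided ideal of $\ms{NSym}$ generated by the elements displayed in \eqref{odd}, specialized to $q=-1$. The plan is to prove the two inclusions by a graded rank count. Since $\phi$ is an algebra map, $\us{Ker}\phi$ is an ideal, and the inclusion $I\subseteq\us{Ker}\phi$ will yield a graded surjection $\ms{NSym}/I\twoheadrightarrow\ms{NSym}/\us{Ker}\phi=\us{Im}\phi=\ms{Sym}_{-1}$ (Proposition~\ref{bp}(3)); it then suffices to show in each degree $n$ that both $\dim_k(\ms{NSym}/I)_n\le p(n)$ and $\dim_k(\ms{Sym}_{-1})_n\ge p(n)$, where $p(n)=|\mathcal{P}\cap C(n)|$ is the number of partitions of $n$. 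The sandwich forces equality and hence $\us{Ker}\phi=I$. For $I\subseteq\us{Ker}\phi$ I would merely check that $\phi$ kills each generator, i.e.\ verify $h_nh_m=h_mh_n$ for $n+m$ even and $h_nh_m+(-1)^nh_mh_n-(-1)^nh_{n+1}h_{m-1}-h_{m-1}h_{n+1}=0$ for $n+m$ odd, directly in $\ms{Sym}_{-1}$. These follow either from $h_n=\sum_{\al\vDash n}M_\al$ and the product rule \eqref{mom} at $q=-1$, from a generating-function computation of $h(x,s)h(x,t)$ in $A_{-1}[[s,t]]$, or, via Proposition~\ref{bp}(1),(2), by checking that each generator pairs to zero with every $H_\be$ of the matching degree using the signed double-coset count \eqref{va} at $q=-1$.

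The upper bound is the combinatorial heart. I would show $\{H_\la:\la\vdash n\}$ spans $(\ms{NSym}/I)_n$ by a straightening procedure that rewrites an arbitrary $H_\al$ modulo $I$ as a combination of partition-indexed words. Adjacent parts of equal parity have even sum, so the even relations transpose them freely and remove all same-parity inversions. For an adjacent pair $(a,b)$ of opposite parity with $a<b$ one invokes the odd relation with $(n,m)=(b,a)$: writing $s=a+b$ and letting $j$ denote the smaller part, it expresses the unsorted word of shape $(s-j,j)$ in terms of the sorted word plus corrections of shape $(s-j+1,j-1)$, transferring one box from the smaller to the larger part. Iterating on the remaining unsorted corrections, the smaller part strictly drops and the recursion terminates when it reaches $0$, where the boundary convention $H_0=1$ collapses the term to a single generator $H_s$. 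One checks that these relations reduce the two-part words of weight $s$ to the $\lfloor s/2\rfloor+1$ partitions of $s$ into at most two parts, and a suitable global induction then gives $\dim_k(\ms{NSym}/I)_n\le p(n)$.

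For the lower bound I would prove that $\{h_\la=\phi(H_\la):\la\vdash n\}$ is linearly independent in $(\ms{Sym}_{-1})_n$, so that $\dim_k(\ms{Sym}_{-1})_n\ge p(n)$. This follows from a triangularity argument: expanding $h_\la$ in the monomial basis $\{M_\mu\}$ of $\ms{QSym}_{-1}$ yields a transition matrix triangular with respect to the dominance order and with invertible (signs, i.e.\ powers of $q$) diagonal entries, exactly as classically; alternatively one computes the Gram matrix $\big((h_\la,h_\mu)\big)$ through \eqref{va} and checks it is nonsingular, using the nondegeneracy of $(\cdot,\cdot)$ on $\ms{Sym}_{-1}$ from Proposition~\ref{bp}(3). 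Combining the two bounds with $I\subseteq\us{Ker}\phi$ forces $\dim_k(\ms{NSym}/I)_n=\dim_k(\ms{Sym}_{-1})_n=p(n)$ in every degree, so the surjection $\ms{NSym}/I\twoheadrightarrow\ms{Sym}_{-1}$ is an isomorphism and $\us{Ker}\phi=I$.

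The step I expect to be the main obstacle is the termination and consistency of the straightening. The delicate case is an opposite-parity consecutive pair $H_aH_{a+1}$: the odd relation read in the order $(n,m)=(a,a+1)$ is vacuous, and sorting instead through $(n,m)=(a+1,a)$ produces corrections of shape $(a+2,a-1)$ whose \emph{spread} has grown, so neither the gap nor the dominance order decreases monotonically. What rescues the argument is that the total weight is preserved while the \emph{smaller} part strictly decreases, bottoming out at $H_0=1$ (for instance $H_1H_2=2H_3-H_2H_1$). Turning this into a well-founded order on compositions, and ensuring the relations are applied consistently so that the count of surviving partition-indexed words is not exceeded, is where the real care is needed.
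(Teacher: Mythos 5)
The paper itself offers no proof of this statement: it is imported verbatim as \cite[Prop.~2.9]{EK} and used as a black box, so there is no in-paper argument to compare yours against. What you have written is essentially a reconstruction of the Ellis--Khovanov proof (generators lie in $\us{Ker}\phi$; a straightening algorithm bounds the graded rank of $\ms{NSym}/I$ above by $p(n)$; independence of $\{h_\la\}_{\la\vdash n}$ bounds the rank of $\us{Im}\phi$ below by $p(n)$; the sandwich closes). The first step and the straightening are sound as outlined; in particular your resolution of the termination worry --- the smaller part of the unsorted correction strictly decreases and bottoms out at $H_0=1$ --- is the right mechanism, and the relation used in the order $(n,m)=(b,a)$ with $b>a$ is indeed the non-vacuous one. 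One practical caveat for the first step: of your three suggested verifications, the safest is the pairing computation via Proposition~\ref{bp} and \eqref{va}; a literal application of the printed recursion for $\bowtie_q$ yields $M_{(1)}^2=(1+q)M_{(1,1)}+qM_{(2)}$, whereas direct computation in $A_q$ gives $M_{(1)}^2=(1+q)M_{(1,1)}+M_{(2)}$, so the merge coefficient in \eqref{mom} as printed should not be taken at face value.

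The genuine gap is your primary route to the lower bound. At $q=-1$ the transition matrix from $\{h_\la\}$ to $\{M_\mu\}$ is \emph{not} triangular for dominance with sign diagonal: already $h_{(1,1)}=h_1^2=M_{(2)}$, so the coefficient of $M_{(1,1)}$ in $h_{(1,1)}$ vanishes rather than being a power of $q$. (Even at $q=1$ the $h$-to-$m$ matrix is not dominance-triangular; that argument belongs to $e_\la$.) Your fallback --- nonsingularity of the Gram matrix $\bigl((h_\la,h_\mu)\bigr)_{\la,\mu\vdash n}$ --- does work, but it is not a routine check and needs one more idea: by \eqref{va} each entry at $q=-1$ equals $\sum_{w\in\mathfrak{S}_{\la,\mu}}(-1)^{\ell(w)}$, which is congruent mod $2$ to $|\mathfrak{S}_{\la,\mu}|$, i.e.\ to the corresponding entry at $q=1$; the classical Gram matrix is the matrix expressing $h_\la$ in the monomial symmetric functions $m_\mu$, hence a transition matrix between two $\mathbb{Z}$-bases of $\ms{Sym}_n$ with determinant $\pm1$; therefore the odd Gram matrix has odd determinant and the $h_\la$ remain linearly independent at $q=-1$. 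With that repair the dimension count closes, and the only remaining work is the bookkeeping of a well-founded order for straightening compositions of length greater than two, which you correctly flag.
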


Now we introduce three commuting linear automorphisms of $\ms{NSym}$, the composition of which gives the antipode of $\ms{NSym}$.
\[\begin{array}{l}
\psi_1:\mbox{a }q\mbox{-Hopf algebra automorphism of }\ms{NSym}\mbox{ defined by }\psi_1(H_n)=q^{-n(n-1)/2}E_n,~n\in\mathbb{N},\\
\psi_2:\mbox{a linear automorphism of }\ms{NSym}\mbox{ defined by }\psi_2(H_\al)=(-1)^{|\al|}q^{|\al|\choose2}H_\al,~\al\in C,\\
\psi_3:\mbox{a bialgebra anti-automorphism of }\ms{NSym}\mbox{ defined by }\psi_3(H_n)=H_n,~n\in\mathbb{N}.
\end{array}
\]
\begin{proposition}\label{auto}
All $\psi_i,~i=1,2,3$, can induce their corresponding (anti-)automorphisms of $\ms{Sym}_q$. Namely, there exist three (anti-)automorphisms of $\ms{Sym}_q$, also denoted $\psi_1,\psi_2$ and $\psi_3$ respectively, such that $\psi_i\circ\phi=\phi\circ\psi_i,~i=1,2,3$.
\end{proposition}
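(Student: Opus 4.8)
The plan is to realize each $\psi_i$ as a map that descends to the quotient $\ms{Sym}_q\cong\ms{NSym}/\us{Ker}\phi$ of Proposition \ref{bp}(4). Concretely, if $\psi_i(\us{Ker}\phi)\subseteq\us{Ker}\phi$, then setting $\psi_i(\phi(x)):=\phi(\psi_i(x))$ is well defined on $\ms{Sym}_q=\us{Im}\phi$ and satisfies $\psi_i\circ\phi=\phi\circ\psi_i$ by construction. Since each $\psi_i$ is bijective and respects the grading of $\ms{NSym}$, and $\us{Ker}\phi=\bigoplus_n\us{Ker}\phi_n$ is graded, the inclusion forces $\psi_i(\us{Ker}\phi)=\us{Ker}\phi$, so the induced map is again bijective and inherits the algebra/coalgebra (anti-)morphism properties of $\psi_i$. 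Thus everything reduces to checking that each $\psi_i$ preserves $\us{Ker}\phi$, which by Proposition \ref{bp}(1) is exactly the radical of the form $(\cdot,\cdot)$.

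The cases $\psi_2$ and $\psi_3$ I would treat directly on the radical. Since $\psi_2$ acts on $\ms{NSym}_n$ by the scalar $(-1)^n q^{\binom{n}{2}}$ and the radical is graded (Proposition \ref{bp}(2)), it is immediate that $\psi_2(\us{Ker}\phi)=\us{Ker}\phi$. For $\psi_3$, being an algebra anti-automorphism fixing each $H_n$, one has $\psi_3(H_\al)=H_{\ol{\al}}$. I would then show $\psi_3$ is an isometry of $(\cdot,\cdot)$ using formula (\ref{va}): conjugation $w\mapsto w_0ww_0$ by the longest element $w_0\in\mathfrak{S}_n$ is length preserving and carries the Young subgroups $\mathfrak{S}_\al,\mathfrak{S}_\be$ to $\mathfrak{S}_{\ol{\al}},\mathfrak{S}_{\ol{\be}}$, hence restricts to a length-preserving bijection $\mathfrak{S}_{\al,\be}\to\mathfrak{S}_{\ol{\al},\ol{\be}}$. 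Therefore $(\psi_3(H_\al),\psi_3(H_\be))=(H_{\ol{\al}},H_{\ol{\be}})=(H_\al,H_\be)$, and an isometry automatically preserves the radical.

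For $\psi_1$ I would avoid a direct computation and instead exploit the antipode. Because $\phi$ is a homomorphism of graded $q$-Hopf algebras, it commutes with the antipodes, so for $x\in\us{Ker}\phi$ we get $\phi(S(x))=S(\phi(x))=0$; thus the antipode $S$ of $\ms{NSym}$ preserves $\us{Ker}\phi$. Now $S=\psi_1\psi_2\psi_3$ with the three factors commuting, and I have just shown $\psi_2$ and $\psi_3$ stabilize $\us{Ker}\phi$; peeling them off gives $\psi_1(\us{Ker}\phi)=S(\us{Ker}\phi)=\us{Ker}\phi$. Combined with the reduction of the first paragraph, this yields the induced $q$-Hopf automorphism $\psi_1$, linear automorphism $\psi_2$, and bialgebra anti-automorphism $\psi_3$ of $\ms{Sym}_q$.

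I expect the only genuinely computational point to be the isometry of $\psi_3$, i.e. the double-coset bookkeeping in formula (\ref{va}); the rest is formal. The one place to be careful is the antipode compatibility $\phi\circ S=S\circ\phi$ in the $q$-Hopf (colored Hopf) setting---it holds because the antipode is the convolution inverse of the identity and is therefore preserved by any bialgebra morphism---after which the argument for $\psi_1$ is purely formal. If one preferred a self-contained treatment of $\psi_1$, an alternative would be to compute $\psi_1$ on the ribbon basis and verify the isometry directly from (\ref{rib}), at the cost of more combinatorics.
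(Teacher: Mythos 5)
Your proposal is correct and starts from the same reduction as the paper---show each $\psi_i$ stabilizes $\us{Ker}\phi$, the radical of $(\cdot,\cdot)$, so that $\phi\circ\psi_i$ factors through $\phi$---but the verifications diverge. The paper handles all three maps uniformly by proving self-adjointness, $(\psi_i(H_\al),H_\be)=(H_\al,\psi_i(H_\be))$: for $\psi_1$ via $(e_\al,h_\be)=(h_\al,e_\be)$, for $\psi_2$ via orthogonality of graded components (same as your observation), and for $\psi_3$ by flipping the double-coset diagrams about the vertical axis. For $\psi_3$ you instead prove an isometry, $(H_{\ol{\al}},H_{\ol{\be}})=(H_\al,H_\be)$, using the length-preserving bijection $w\mapsto w_0ww_0$ from $\mathfrak{S}_{\al,\be}$ onto $\mathfrak{S}_{\ol{\al},\ol{\be}}$; this is correct and of essentially the same cost, since a bijective isometry preserves the radical just as a self-adjoint map does. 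The genuine difference is $\psi_1$: you replace the paper's computation by the structural fact that $\phi$, being a morphism of $q$-Hopf algebras, intertwines antipodes (the convolution-inverse argument, which indeed does not involve the braiding), so $S=\psi_1\psi_2\psi_3$ stabilizes $\us{Ker}\phi$ and hence so does $\psi_1=S(\psi_2\psi_3)^{-1}$. This buys a computation-free treatment of $\psi_1$, at the price of leaning on the paper's unproved assertion that $\psi_1\psi_2\psi_3$ is the antipode (easily checked on $H_n$, but worth saying). Two points to tighten: first, peeling off $\psi_2\psi_3$ requires $\psi_2\psi_3(\us{Ker}\phi)=\us{Ker}\phi$, not merely an inclusion; rather than the general claim that a bijective graded endomorphism preserving a graded submodule maps it onto itself (which over an arbitrary base ring needs a Noetherian argument), just note that $\psi_3$ is an involution and $\psi_2$ acts by unit scalars on graded pieces. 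Second, your $\psi_3$ argument implicitly uses that conjugation by $w_0$ sends minimal double coset representatives for $(\mathfrak{S}_\al,\mathfrak{S}_\be)$ to those for $(\mathfrak{S}_{\ol{\al}},\mathfrak{S}_{\ol{\be}})$; this follows from $\us{des}(w_0ww_0)=\{i:n-i\in\us{des}(w)\}$ and is worth one line.
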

\begin{proof}
In order to get such $\psi_i$'s on $\ms{Sym}_q$, we only need to check
\[(\psi_i(H),H')=(H,\psi_i(H')),~\forall H,H'\in\ms{NSym}.\]
Since then we have $\psi_i(\us{Ker}\phi)\subseteq\us{Ker}\phi$, which means that $\phi\circ\psi_i$ can be factored through by $\phi$.
Now we check them using the basis $\{H_\al\}_{\al\in C}$. For any $\al,\be\in C$, we have
\[\begin{array}{l}
(\psi_1(H_\al),H_\be)=(e_\al,h_\be)=(h_\al,e_\be)=(H_\al,\psi_1(H_\be)),\\
(\psi_2(H_\al),H_\be)=(-1)^{|\al|}q^{|\al| \choose 2}(H_\al,H_\be)=(-1)^{|\be|}q^{|\be| \choose 2}(H_\al,H_\be)=(H_\al,\psi_2(H_\be)),\\
(\psi_3(H_\al),H_\be)=(H_{\ol{\al}},H_\be)
=(H_\al,H_{\ol{\be}})=(H_\al,\psi_3(H_\be)),
\end{array}\]
where the equality for $\psi_3$ can be seen by rotating all diagrams via the vertical axis which keeps all the values unchanged.
\end{proof}

\section{Malvenuto-Reutenauer algebra and its $q$-analogue}
In the paper \cite{MR}, Malvenuto and Reutenauer defined a self-dual graded Hopf algebra structure on the free abelian group $\mathbb{Z}\mathfrak{S}=\bigoplus_{n\geq0}\mathbb{Z}\mathfrak{S}_n$ of permutations, denoted the \textit{MR-algebra}.  The relation between the MR-algebra and the algebra of (quasi)symmetric functions inspires us to construct the $q$-analogue of the MR-algebra for the investigation of the $q$-(quasi)symmetric functions.

We first introduce the MR-algebra, which is a dual pair of graded Hopf algebras $(\mathbb{Z}\mathfrak{S},*,\De)$ and $(\mathbb{Z}\mathfrak{S},*',\De')$, denoted $\ms{MR}$ and $\ms{MR}'$ respectively. One can check the details in \cite{AgS,MR}.
For any word $w$ of length $n$ on a totally ordered alphabet $A$, denote $\us{alph}(w)\subset A$ the set of letters in $w$ and $\us{st}(w)\in\mathfrak{S}_n$, called the \textit{standardization} of $w$, the unique permutation such that
\[\us{st}(w)(i)<\us{st}(w)(j)\mbox{ iff }w_i<w_j\mbox{ or }w_i=w_j,i<j,\]
where $w=w_1\cdots w_n$. Note that $\ell(\us{st}(w))$ is just the inversion number of $w$. For any $w\in\mathfrak{S}_n$, one can view it as a word on $[n]$. For $I\subseteq[n]$, let $w|_I$ denote the subword of $w$ keeping only the digits in $I$. We also need the shuffle product \sh and the concatenation coproduct $\de'$.
For any $w\in\mathfrak{S}_p,~w'\in\mathfrak{S}_q$,
\[
w*w'=\sum\limits_{\us{\ti{alph}}(u)\cup\us{\ti{alph}}(v)=[p+q]
\atop\us{\ti{st}}(u)=w,~\us{\ti{st}}(v)=w'}uv,~
\De(w)=\sum\limits_{i=0}^nw|_{\{1,\dots,i\}}\ot\us{st}(w|_{\{i+1,\dots,n\}}).
\]
On the other hand,
\[
w*'w'=w\sh\ol{w'},~
\De'(w)=(\us{st}\ot\us{st})\de(w),
\]
where $\ol{w'}$ means shifting the digits in $w'$ by $p$.

Under the canonical pairing on $\mathbb{Z}\mathfrak{S}$ defined by $\lan w,w'\ran=\de_{w,w'}$, $\ms{MR}$ and $\ms{MR}'$ are dual to each other as graded Hopf algebras.
Meanwhile, there exists an involution $\te$ of graded Hopf algebras between them defined by $\te(w)=w^{-1}$ and the embedding $\iota:\ms{NSym}\rightarrow \ms{MR}$ defined by $\iota(H_\al)=D_{\preceq\al}$ (resp. $\iota(R_\al)=D_\al$), where $D_{\preceq\al}=\sum_{c(w)\preceq\al}w$ (resp. $D_\al=\sum_{c(w)=\al}w$). Such embedding identifies $\ms{NSym}$ with the \textit{Solomon's descent subalgebra} of $\mathbb{Z}\mathfrak{S}$ with a basis $\{D_{\preceq\al}:\al\in C\}$.
Dually there exists a surjection $\pi':\ms{MR}'\rightarrow \ms{QSym}$ defined by $\pi'(w)=F_{c(w)}$.

Next we construct the $q$-analogues $\ms{MR}_q$ and $\ms{MR}'_q$ over $k$. For $\ms{MR}'_q$ we modify the product $*'$ of $\ms{MR}'$ to $*'_q$ defined by
\begin{equation}\label{ashq'}
w*'_qw'=w\shq\ol{w'},
\end{equation}
where \shq is the $q$-shuffle on words defined in \cite{Hof}. That is, $\ms{MR}'_q=(k\mathfrak{S},*'_q,\De')$.  Dually for $\ms{MR}_q$ we modify the coproduct $\De$ of $\ms{MR}$ to $\De_q$ defined by
\begin{equation}\label{deq}
\De_q(w)=\sum\limits_{i=0}^nq^{\us{\s{inv}}(i,w)}w|_{\{1,\dots,i\}}\ot\us{st}(w|_{\{i+1,\dots,n\}}),
\end{equation}
where $\us{inv}(i,w)$ is the number of pairs $(k,l),~k>l$ such that $w_k\in\{1,\dots,i\},w_l\in\{i+1,\dots,n\}$ as $w=w_1\cdots w_n$. That is, $\ms{MR}_q=(k\mathfrak{S},*,\De_q)$. We remind that the multiplication of tensor products now involves the $q$-flip $\si$ defined by $\si(w\ot w')=q^{nm}w'\ot w,~w\in\mathfrak{S}_n,w'\in\mathfrak{S}_m$.  Now we are in the position to check the following result
\begin{theorem}\label{mr}
(1) $\ms{MR}_q$ and $\ms{MR}'_q$ are graded dual to each other as $q$-Hopf algebras with respect to the canonical pairing $\lan,\ran:\ms{MR}'_q\times\ms{MR}_q\rightarrow k$.

(2) There exists an isomorphism $\te_q:\ms{MR}_q\rightarrow\ms{MR}'_q$, defined by $\te_q(w)=q^{\ell(w)}w^{-1}$, of graded $q$-Hopf algebras.

(3) There exists a surjection  $\pi'_q:\ms{MR}'_q\rightarrow\ms{QSym}_q,~\pi'_q(w)=F_{c(w)},~w\in\mathfrak{S}$ and an embedding $\iota_q:\ms{NSym}\rightarrow\ms{MR}_q,~\iota_q(H_\al)=D_{\preceq\al},~\al\in C$ of graded $q$-Hopf algebras such that $\lan \pi'_q(w),H\ran=\lan w,\iota_q(H)\ran,~w\in\mathfrak{S},H\in\ms{NSym}$.

(4) The composition $\pi'_q\circ\te_q\circ\iota_q:\ms{NSym}\rightarrow\ms{QSym}_q$ is just the forgetful map $\phi$.
\end{theorem}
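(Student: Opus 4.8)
For part (1), note first that the product $*$ of $\ms{MR}_q$ and the coproduct $\De'$ of $\ms{MR}'_q$ are the undeformed classical operations, so that the adjointness $\lan v,w*w'\ran=\lan\De'(v),w\ot w'\ran$ is inherited from the classical dual pair $\ms{MR},\ms{MR}'$, modulo the routine check that $\lan\cdot,\cdot\ran$ is extended to tensor products in the manner appropriate to the braided setting. The genuinely new content is that the deformed coproduct $\De_q$ of $\ms{MR}_q$ is adjoint to the deformed product $*'_q$ of $\ms{MR}'_q$. I would prove $\lan v*'_qv',w\ran=\lan v\ot v',\De_q(w)\ran$ by matching coefficients: the weight $q^{\us{inv}(i,w)}$ carried by the term $w|_{\{1,\dots,i\}}\ot\us{st}(w|_{\{i+1,\dots,n\}})$ of $\De_q(w)$ counts exactly the crossings between the low block $\{1,\dots,i\}$ and the high block $\{i+1,\dots,n\}$, which is precisely the inversion weight that the $q$-shuffle $\shq$ assigns when the standardized factors of $v*'_qv'$ are interleaved. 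It then remains to verify that each of $\ms{MR}_q$ and $\ms{MR}'_q$ is itself a $q$-bialgebra, i.e. $\De_q(w*w')=\De_q(w)*\De_q(w')$ with the tensor multiplication twisted by $\si(w\ot w')=q^{nm}w'\ot w$, and the analogous identity in $\ms{MR}'_q$; these I would settle by direct inversion counts, the two being equivalent once the duality pairing is in place.

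For part (2), the classical involution $\te(w)=w^{-1}$ is a graded Hopf isomorphism $\ms{MR}\to\ms{MR}'$, so I only need the scalar twist $q^{\ell(w)}$ to absorb the two $q$-deformations; bijectivity is automatic since $w\mapsto w^{-1}$ is an involution and $q^{\ell(w)}$ is invertible. For the algebra law $\te_q(w*w')=\te_q(w)*'_q\te_q(w')$ I would invert the shifted-shuffle description of $*$: inverting turns it into a shuffle of $w^{-1}$ with the shift of $(w')^{-1}$, and the length of each resulting permutation splits as $\ell(w)+\ell(w')$ plus the number of crossings, which matches the prefactor $q^{\ell(w)+\ell(w')}$ times the crossing exponent supplied by $\shq$ in $*'_q$. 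For the coalgebra law $\De'(\te_q(w))=(\te_q\ot\te_q)\De_q(w)$ (where the tensor map is twisted by $\si$), the point is that inverting $w$ exchanges the value-splitting weighted by $\us{inv}(i,w)$ with the position-splitting of $\De'$, the residual $q$-discrepancy being again reconciled by $q^{\ell}$. The delicate step is fixing conventions for the order of tensor factors and the $\si$-twist once and then tracking the exponent of $q$ on both sides.

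For part (3), I would first dispatch the adjointness $\lan\pi'_q(w),H\ran=\lan w,\iota_q(H)\ran$, which is purely formal: since $\iota_q(H_\al)=D_{\preceq\al}=\sum_{c(u)\preceq\al}u$ and $\pi'_q(w)=F_{c(w)}=\sum_{\ga\succeq c(w)}M_\ga$, both $\lan\pi'_q(w),H_\al\ran$ and $\lan w,\iota_q(H_\al)\ran$ reduce, via $\lan M_\ga,H_\al\ran=\de_{\ga,\al}$, to the indicator of $c(w)\preceq\al$. Hence $\pi'_q$ and $\iota_q$ are mutually adjoint, and by Proposition \ref{qdual} together with part (1) it suffices to show that one of them respects both structures. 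I would verify this for $\iota_q$: its algebra property is the classical fact that Solomon's descent sums close under the undeformed $*$, while its coalgebra property is where the deformation enters, requiring that summing the weights $q^{\us{inv}(i,w)}$ over the descent class $c(w)\preceq\al$ reproduce the bicharacter $\te(\ga',\ga'')$ appearing in $\De_q(H_\al)$. Adjointness then promotes $\pi'_q$ to a $q$-Hopf morphism for free. Injectivity of $\iota_q$ is the linear independence of the descent basis, and surjectivity of $\pi'_q$ holds because every $F_\al$ is the image of any $w$ with $c(w)=\al$.

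For part (4), rather than computing the composite directly I would test it against the nondegenerate pairing. Using the adjointness from (3), for every $\be$ one has $\lan\pi'_q\te_q\iota_q(H_\al),H_\be\ran=\lan\te_q(D_{\preceq\al}),D_{\preceq\be}\ran=\sum q^{\ell(w)}$, the sum running over $w$ with $c(w)\preceq\al$ and $c(w^{-1})\preceq\be$. Replacing $w$ by $w^{-1}$, which preserves $\ell$, turns this into $\sum_{w\in\mathfrak{S}_{\al,\be}}q^{\ell(w)}$, which by formula (\ref{va}) equals $(H_\al,H_\be)=\lan h_\al,H_\be\ran=\lan\phi(H_\al),H_\be\ran$; nondegeneracy then forces $\pi'_q\circ\te_q\circ\iota_q=\phi$. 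I expect the real obstacle to sit in parts (1) and (2): the bookkeeping that identifies the value-splitting statistic $\us{inv}(i,w)$ with the crossing statistic of $\shq$ and checks its compatibility with the braided flip $\si$, together with the verification that the single scalar $q^{\ell(w)}$ simultaneously corrects both the product and the coproduct under $\te_q$. Once these $q$-power matchings are pinned down, parts (3) and (4) follow formally from duality and the explicit form (\ref{va}) of the bilinear form.
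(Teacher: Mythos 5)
Your proposal is sound and its overall architecture matches the paper's, but it diverges in two places worth recording. In part (3) the paper verifies that $\pi'_q$ is a morphism \emph{directly}: the algebra property is imported from the Thibon--Ung multiplication formula (\ref{funm}) for the $F_\al$'s (proved via $P$-partitions), the coalgebra property from (\ref{fun}), and only then is $\iota_q$ handled "for free" by adjunction. You run the dual route: verify $\iota_q$ directly and promote $\pi'_q$ by adjointness. That is logically legitimate given Proposition \ref{qdual} and part (1), but it shifts the real work onto showing $\De_q(D_{\preceq\al})=\sum\te(\ga',\ga'')D_{\preceq\ga'}\ot D_{\preceq\ga''}$, a weighted count over descent classes that you correctly name but do not carry out; the paper's choice avoids this by leaning on a cited result, so your route is more self-contained but leaves its hardest computation as a stub. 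Similarly, in part (1) the paper does not re-derive the bialgebra axiom by inversion counting as you propose: it transports the known $q$-Hopf structure of $(k\lan A\ran,\shq,\de')$ through standardization, which is cleaner than your "direct inversion counts." In part (2) you and the paper use the same two ingredients, namely the identity $(q^{\ell(w)}w)*'_q(q^{\ell(w')}w')=\sum_{u\in w*'w'}q^{\ell(u)}u$ and the length decomposition $\us{inv}(i,w)+\ell(w|_{\{1,\dots,i\}})+\ell(\us{st}(w|_{\{i+1,\dots,n\}}))=\ell(w)$. In part (4) your pairing argument against $H_\be$ via formula (\ref{va}) and nondegeneracy is a correct variant of the paper's direct computation in the ribbon basis via (\ref{rib}); both amount to the same double-coset count.
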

\noindent
\begin{proof} (1) The duality are straightforward to check. We only need to show that $\ms{MR}'_q$ is a $q$-Hopf algebra. For $w\in\mathfrak{S}_n,w'\in\mathfrak{S}_m$, we have
\[\begin{split}
\De'(w*'_qw')&=(\us{st}\ot\us{st})\de'(w*'_qw')=(\us{st}\ot\us{st})\de'(w{\shq}\ol{w'})
=(\us{st}\ot\us{st})(\de'(w){\shq}\de'(\ol{w'}))\\&=(\us{st}\ot\us{st})\sum\limits_{uv=w\atop xy=\ol{w'}}(\shq\ot\shq)(\us{id}\ot\si\ot\us{id})(u\ot v\ot x\ot y)\\
&=(\us{st}\ot\us{st})\sum\limits_{uv=w\atop xy=\ol{w'}}q^{|v||x|}(u\shq x)\ot(v\shq y)\\&=\sum\limits_{uv=w\atop xy=w'}q^{|v||x|}(\us{st}(u)*'_q\us{st}(x))\ot(\us{st}(v)*'_q\us{st}(y))=\De'(w)*'_q\De'(w'),
\end{split}\]
where we use the graded $q$-Hopf algebra structure $(k\lan A\ran,\shq,\de')$ and the evaluation $|\cdot|$ of grading is just the length function of words.

(2) As we have known that $\te(w*w')=\te(w)*'\te(w')$, thus
\[\te_q(w*w')=\sum_{u\in w*w'}q^{\ell(u)}u^{-1}=\sum_{u\in w^{-1}*'w'^{-1}}q^{\ell(u)}u=\te_q(w)*'_q\te_q(w'),\]
where we use the identity $(q^{\ell(w)}w)*'_q(q^{\ell(w')}w')=\sum\limits_{u\in w*'w'}q^{\ell(u)}u$.
Meanwhile,
\[\begin{split}(\te_q\ot\te_q)\De_q(w)&=\sum\limits_{i=0}^nq^{\us{\s{inv}}(i,w)+\ell(w|_{\{1,\dots,i\}})
+\ell(\us{\s{st}}(w|_{\{i+1,\dots,n\}}))}
(w|_{\{1,\dots,i\}})^{-1}\ot(\us{st}(w|_{\{i+1,\dots,n\}}))^{-1}\\
&=q^{\ell(w)}\sum\limits_{i=0}^n\us{st}(w^{-1}(1)\cdots w^{-1}(i))\ot\us{st}(w^{-1}(i+1)\cdots w^{-1}(n))=\De'\circ\te_q(w),
\end{split}\]
where we use the identity $\us{inv}(i,w)+\ell(w|_{\{1,\dots,i\}})
+\ell(\us{st}(w|_{\{i+1,\dots,n\}}))=\ell(w)$. Hence, $\te_q$ is a homomorphism of graded $q$-Hopf algebras.

(3) Note that $\iota$ and $\iota_q$ (resp. $\pi'$ and $\pi'_q$) are exactly the same as maps of $k$-vector spaces. What we need to show is that they are homomorphisms of  graded $q$-Hopf algebras. Let's check $\pi'_q$ as follows,
\[\pi'_q(w*'_qw')=\sum\limits_{u\in \mathfrak{S}}\lan w*'_qw',u\ran \pi'_q(u)=\sum\limits_{u\in \mathfrak{S}}\lan w*'_qw',u\ran F_{c(u)}=\pi'_q(w)\pi'_q(w'),\]
where we use the notation $w*'_qw'=\sum\limits_{u\in\mathfrak{S}}\lan w*'_qw',u\ran u$ and the multiplication formula
\begin{equation}\label{funm}
F_{c(w)}F_{c(w')}=\sum\limits_{u\in\mathfrak{S}}\lan w*'_qw',u\ran F_{c(u)}
\end{equation}
in $\ms{QSym}_q$ proved in \cite[4]{TU} using the language of $P$-partitions.
On the other hand, using the formula (\ref{fun}) we have
\[\begin{split}
(\pi'_q\ot\pi'_q)\De'(w)&=\sum\limits_{i=0}^n\pi'_q(\us{st}(w_1\cdots w_i))\ot\pi'_q(\us{st}(w_{i+1}\cdots w_n))\\
&=\sum\limits_{i=0}^nF_{c(\us{\s{st}}(w_1\cdots w_i))}\ot F_{c(\us{\s{st}}(w_{i+1}\cdots w_n))}=\De'(F_{c(w)})=\De'\pi'_q(w),
\end{split}\]
where $w=w_1\cdots w_n\in \mathfrak{S}_n$ and we use the fact that $c(\us{st}(w_1\cdots w_i)),c(\us{st}(w_{i+1}\cdots w_n)),~i=0,1,\dots,n$ are those $\al,\be \in C$ such that $\al\be=c(w)$ or $\al\vee\be=c(w)$.
By the adjunction between $\pi'_q$ and $\iota_q$, it implies $\iota_q$ is also a homomorphism of graded $q$-Hopf algebras.

(4) Using the basis $\{R_\al\}_{\al\in C}$ of $\ms{NSym}$ , we obtain
\[\begin{split}\pi'_q&\circ\te_q\circ\iota_q(R_\al)=\pi'_q\circ\te_q(D_\al)=\pi'_q(\sum\limits_{c(w)
=\al}q^{\ell(w)}w^{-1})=\sum\limits_{c(w)=\al}q^{\ell(w)}F_{c(w^{-1})}\\&=\sum\limits_{\be\in C}\lb\sum\limits_{c(w)=\al \atop c(w^{-1})=\be}q^{\ell(w)}\rb F_\be=
\sum\limits_{\be\in C}(R_\al,R_\be ) F_\be=\sum\limits_{\be\in C}\lan\phi(R_\al),R_\be\ran F_\be=\phi(R_\al),
\end{split}\]
where we apply the formula (\ref{rib}) and the duality between $R_\al$ and $F_\be$.
\end{proof}

\section{Poirier-Reutenauer algebra and its $q$-analogue}
In the paper \cite{PR}, Poirier and Reutenauer defined a graded Hopf quotient $\ms{PR}$ of $\ms{MR}$ and a graded Hopf subalgebra $\ms{PR}'$ of $\ms{MR}'$ dual to each other with respect to the canonical pairing, denoted the \textit{PR-algebra}. Later in the paper \cite{DHT}, Duchamp, Hivert and Thibon realize the MR-algebra as the algebra of \textit{free quasisymmetric functions}, denoted $\ms{FQSym}$, and identify $\ms{PR}'$ as the algebra $\ms{FSym}$ of \textit{free symmetric functions}, restricting the isomorphism between $\ms{MR}'$ and $\ms{FQSym}$. In order to introduce the PR-algebra, we first recall the Knuth equivalence as follows.

There exists a remarkable correspondence between two-rowed arrays in lexicographic order and pairs of semistandard tableaux with the same shape, called the \textit{Robinson-Schensted-Knuth correspondence}. For any such two-rowed array $w$, we denote $(P(w),Q(w))$ the image under the RSK-correspondence. $P(w)$ comes from the Schensted \textit{row-insertion} of $w$, and $Q(w)$ is the recording tableau. For further details, one can refer to \cite[Chapter 4]{Ful}.

For words on $\mathbb{Z}^+$, Knuth defined an equivalence relation on them, referred to as the \textit{Knuth equivalence} $\sim_K$. It's generated by the following elementary ones,
\[\begin{array}{lll}
  (K')\quad&yxz\sim yzx,& \mbox{ if } x<y\leq z,\\
  (K'')\quad&xzy\sim zxy,& \mbox{ if } x\leq y<z.
\end{array}\]
For any two words $w,w'$, note that $P(w)=P(w')$ if and only if they are Knuth equivalent. Hence, one can identify the Knuth equivalence classes with semistandard tableaux. When $w$ is a word on $\mathbb{Z}^+$, we write $w=w_1\cdots w_n$ instead of $\begin{pmatrix}
1&\cdots&n\\w_1&\cdots&w_n
\end{pmatrix}$ throughout the paper. For any $T\in\us{SYT}_n$, one can define the \textit{descent} set of $T$ as \[\us{des}(T)=\{k\in[n-1]:
k+1\mbox{ appears weakly left of }k\mbox{ in }T\},\]
together with the composition $c(T)$ associated with $\us{des}(T)$. From the RSK-algorithm, it's easy to see that $\us{des}(w)=\us{des}(Q(w))$ for any $w\in\mathfrak{S}$ \cite[\S4, Exercise 17]{Ful}.

\subsection{$q$-analogue construction of the PR-algebras}
First we recall the definition of the PR-algebras. Define $cl(T):=\sum\limits_{P(w)=T}w\in\mathbb{Z}\mathfrak{S}_n$ for any $T\in\us{SYT}_n$. Poirier and Reutenauer in \cite{PR} proved that $\ms{PR}'=\bigoplus\limits_{T\in\us{\s{SYT}}}k\cdot cl(T)$ is a Hopf subalgebra of $\ms{MR}'$. Meanwhile, the ideal $J$ of $\ms{MR}$ generated by the Knuth equivalence relations forms a Hopf ideal of $\ms{MR}$ and thus gives the Hopf quotient $\ms{PR}=\ms{MR}/J$ such that $\ms{PR}$ and $\ms{PR}'$ are dual to each other with respect to the canonical pairing.

Now let's define the $q$-analogue of the PR-algebra. The original definition inspires us to naturally construct a $q$-Hopf subalgbra $\ms{PR}'_q$ of $\ms{MR}'_q$ and a $q$-Hopf quotient $\ms{PR}_q$ of $\ms{MR}_q$ again dual to each other with respect to the canonical pairing. Define the following $q$-analogue of elementary Knuth relations,
\[\begin{array}{lll}
  (K'_q)\quad&q yxz\sim yzx,& \mbox{ if } x<y\leq z,\\
  (K''_q)\quad&q xzy\sim zxy,& \mbox{ if } x\leq y<z.
\end{array}\]
Let $J_q$ be the $k$-submodule of $k\mathfrak{S}$ spanned by those $w'-q^{\ell(w')-\ell(w)}w$ such that $w\sim_K w'$. For example, $3124-q1324,~3124-1342\in J_q$.
\begin{definition}
  Define $\ms{PR}_q:=\ms{MR}_q/J_q$ and $\ms{PR}'_q$ to be the orthogonal complement of $J_q$ in $\ms{MR}'_q$. That is,
 \[\ms{PR}'_q=\bigoplus\limits_{T\in\us{\s{SYT}}}k\cdot c_q(T), ~c_q(T)=\sum\limits_{P(w)=T}q^{\ell(w)}w.\]
\end{definition}

\begin{theorem}
$\ms{PR}_q$ and $\ms{PR}'_q$ are graded dual to each other as $q$-Hopf algebras with respect to the canonical pairing $\lan,\ran:\ms{PR}'_q\times\ms{PR}_q\rightarrow k$.
\end{theorem}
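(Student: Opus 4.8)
The plan is to establish the theorem by proving that $J_q$ is a homogeneous $q$-Hopf ideal of $\ms{MR}_q$; everything else is then formal. Indeed, once $J_q$ is a $q$-Hopf ideal, $\ms{PR}_q=\ms{MR}_q/J_q$ is automatically a graded $q$-Hopf algebra, and by the perfect pairing of Theorem \ref{mr}(1) its annihilator $\ms{PR}'_q=J_q^\perp$ is dually a graded $q$-Hopf subalgebra of $\ms{MR}'_q$, with the pairing descending to a perfect pairing $\ms{PR}'_q\times\ms{PR}_q\to k$. That $J_q^\perp=\bigoplus_T k\cdot c_q(T)$ is a short linear-algebra check: an element $\sum_w a_w w$ annihilates every generator $w'-q^{\ell(w')-\ell(w)}w$ (for $w\sim_K w'$) exactly when $a_w q^{-\ell(w)}$ is constant on each Knuth class, that is, when it is a combination of the $c_q(T)$. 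It thus remains to verify that $J_q$ is a two-sided ideal for $*$ and a coideal for $\De_q$ (the counit kills $J_q$ since $J_q$ has trivial degree-$0$ component, and the antipode is automatic by connectedness).

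For the ideal property I would use two standard plactic facts: Knuth equivalence is a two-sided congruence for concatenation, and it is preserved by order-preserving relabellings of the alphabet. Fix a generator $g=w'-q^{\ell(w')-\ell(w)}w$ with $w\sim_K w'$ and expand $g*u$ term by term over the alphabet splits $I$ defining $*$. For a fixed $I$ the two resulting words differ only in their first block, which standardizes to $w$ resp.\ $w'$ on the same alphabet $I$, hence they are Knuth equivalent. Crucially, the number of cross-inversions between the two blocks depends only on $I$ and not on the arrangement, so the two words have lengths differing by exactly $\ell(w')-\ell(w)$. Each paired term is therefore itself a generator of $J_q$, whence $g*u\in J_q$; the left multiplication $u*g$ is symmetric. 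This is where the $q$-bookkeeping matters: the inversion number factors as (inversions in the first block) $+$ (inversions in the second block) $+$ (cross-inversions), and only the first block changes, so the $q$-weight shift is uniform across all terms.

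For the coideal property I would again rely on the fact that restricting a word to an initial segment $\{1,\dots,i\}$ or a final segment $\{i+1,\dots,n\}$ of values preserves Knuth equivalence, so that $w|_{\{1,\dots,i\}}\sim_K w'|_{\{1,\dots,i\}}$ and $\us{st}(w|_{\{i+1,\dots,n\}})\sim_K \us{st}(w'|_{\{i+1,\dots,n\}})$ for each $i$. To reconcile the $q$-weights in $\De_q$ I would invoke the identity $\us{inv}(i,w)+\ell(w|_{\{1,\dots,i\}})+\ell(\us{st}(w|_{\{i+1,\dots,n\}}))=\ell(w)$ already used in the proof of Theorem \ref{mr}(2); substituting it into $\De_q(w')-q^{\ell(w')-\ell(w)}\De_q(w)$ rewrites the $i$-th term as an invertible scalar multiple of $a'\ot b'-q^{d_1+d_2}a\ot b$, where $a=w|_{\{1,\dots,i\}}$, $a'=w'|_{\{1,\dots,i\}}$, $b,b'$ are the standardized tails, and $d_1=\ell(a')-\ell(a)$, $d_2=\ell(b')-\ell(b)$. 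This telescopes as $(a'-q^{d_1}a)\ot b'+q^{d_1}a\ot(b'-q^{d_2}b)$, which lies in $J_q\ot\ms{MR}_q+\ms{MR}_q\ot J_q$ because both $a'-q^{d_1}a$ and $b'-q^{d_2}b$ are generators of $J_q$. Summing over $i$ yields $\De_q(g)\in J_q\ot\ms{MR}_q+\ms{MR}_q\ot J_q$.

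The main obstacle is precisely this $q$-exponent bookkeeping: one must check that the cross-inversion contributions in the product, and the statistic $\us{inv}(i,w)$ in the coproduct, conspire so that the corrected terms land in $J_q$ itself rather than merely in the classical ideal $J$. The two combinatorial compatibilities I invoke both reduce to checking the elementary Knuth moves $(K')$ and $(K'')$ directly, and the length identity supplies the exact exponents, so I expect no further difficulty once these are recorded. Finally, since $J_q$ is a homogeneous $q$-bi-ideal of the graded connected $q$-Hopf algebra $\ms{MR}_q$, the antipode descends and $\ms{PR}_q$ is a graded $q$-Hopf algebra; dualizing through Theorem \ref{mr}(1) then exhibits $\ms{PR}_q$ and $\ms{PR}'_q$ as graded dual $q$-Hopf algebras.
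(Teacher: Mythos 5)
Your proposal is correct and follows essentially the same route as the paper: both reduce the theorem to showing that $J_q$ is a $q$-Hopf ideal of $\ms{MR}_q$, prove the ideal property by pairing terms of $*$ over alphabet splits (where the cross-inversion count depends only on the split, so the $q$-shift $\ell(w')-\ell(w)$ is uniform), and prove the coideal property via the identity $\us{inv}(i,w)+\ell(w|_{\{1,\dots,i\}})+\ell(\us{st}(w|_{\{i+1,\dots,n\}}))=\ell(w)$. Your telescoping $(a'-q^{d_1}a)\ot b'+q^{d_1}a\ot(b'-q^{d_2}b)$ packages uniformly what the paper handles by a case analysis on where the letters of the elementary Knuth triple fall (including the cancellation when the triple is split between the two value ranges), and if anything it makes the $q$-exponent bookkeeping in the coproduct more explicit than the paper's sketch.
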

\begin{proof}
By proposition \ref{mr}, once we prove that $J_q$ is a Hopf ideal of $\ms{MR}_q$, then $\ms{PR}_q$ is a $q$-Hopf quotient with $\ms{PR}'_q$ dual to it. On one hand, for any word $w$ without repeating letters, the inversion number of $w$ and $\us{st}(w)$ are the same. Hence by the definition of $*$, if
$w'-q^{\ell(w')-\ell(w)}w\in J_q$, then
\[(w'-q^{\ell(w')-\ell(w)}w)*w''=\sum\limits_{\us{\ti{st}}(u)=w,~\us{\ti{st}}(u')=w'
\atop\us{\ti{st}}(v)=w''}(u'-q^{\ell(u')-\ell(u)}u)v\in J_q,\]
where the sum is only over those $u,u',v$ such that $\us{alph}(u)\cup\us{alph}(v)=\us{alph}(u')\cup\us{alph}(v)=[n]$ for some $n$.
Similarly, $w''*(w'-q^{\ell(w')-\ell(w)}w)$ lies in $J_q$. It means $J_q$ is an ideal.

On the other hand, suppose $w,w'$ are up to an equivalence $(K'_q)$ and fix the index $i$ from the formula (\ref{deq}) of $\De_q$. If $x,y,z$ all lie in $\{1,\dots,i\}$ (or $\{i+1,\dots,n\}$), then the corresponding terms in (\ref{deq}) are still up to an equivalence $(K'_q)$. Otherwise, there are two other possibilities: (1) $x\in\{1,\dots,i\}$ and $y,z\in\{i+1,\dots,n\}$; (2)  $x,y\in\{1,\dots,i\}$ and $z\in\{i+1,\dots,n\}$. For both cases we have $\us{inv}(w,i)=\us{inv}(w',i)-1$, thus the corresponding terms of $\De_q(w'-q^{\ell(w')-\ell(w)}w)$ vanish and finally we have $\De_q(w'-q^{\ell(w')-\ell(w)}w)\in J_q\ot k\mathfrak{S}+k\mathfrak{S}\ot J_q$. It's similar for the case up to an equivalence $(K''_q)$. Hence, $J_q$ is also a coideal.
\end{proof}

Next we recall some notion concerning about tableaux. Given any $T\in\us{SSYT}(\la/\mu)$, we use the coordinate $(i,j)\in\la/\mu$ to locate the box lying in the $i$th row and the $j$th column of $T$ counting from left to right and top to bottom, also denote $T(i,j)$ the number occupying that box. For $i\in\mathbb{Z^+}$, we denote $i\in T$ if $i$ is an entry of $T$.
Define the \textit{row word} $w(T)$ of $T$ as the word obtained by reading the entries of $T$ ``from left to right and bottom to top'', which gives $P(w(T))=T$ by the row-insertion. Meanwhile, there's a dot product $\cdot$ on tableaux such that $w(T\cdot T')\sim_K w(T)w(T')$.

 Define the \textit{inversion number} of $T$ as
\[\us{inv}(T)=|\{((i,j),(i',j'))\in\la\times\la:T(i,j)\geq T(i',j'),~i<i'\}|,\]
and the \textit{standardization} $\us{st}(T)$ of $T$ as the unique standard tableau such that $\us{inv}(\us{st}(T))=\us{inv}(T)$. For instance,
\[T=\raisebox{1.8em}{\xy 0;/r.2pc/:
(0,0)*{};(15,0)*{}**\dir{-};
(0,-5)*{};(15,-5)*{}**\dir{-};
(0,-10)*{};(10,-10)*{}**\dir{-};
(0,-15)*{};(5,-15)*{}**\dir{-};
(0,0)*{};(0,-15)*{}**\dir{-};
(5,0)*{};(5,-15)*{}**\dir{-};
(10,0)*{};(10,-10)*{}**\dir{-};
(15,0)*{};(15,-5)*{}**\dir{-};
(2.5,-2.5)*{1};(7.5,-2.5)*{2};(12.5,-2.5)*{3};
(2.5,-7.5)*{2};(7.5,-7.5)*{4};(2.5,-12.5)*{3};
\endxy}~,~
\us{st}(T)=\raisebox{1.8em}{\xy 0;/r.2pc/:
(0,0)*{};(15,0)*{}**\dir{-};
(0,-5)*{};(15,-5)*{}**\dir{-};
(0,-10)*{};(10,-10)*{}**\dir{-};
(0,-15)*{};(5,-15)*{}**\dir{-};
(0,0)*{};(0,-15)*{}**\dir{-};
(5,0)*{};(5,-15)*{}**\dir{-};
(10,0)*{};(10,-10)*{}**\dir{-};
(15,0)*{};(15,-5)*{}**\dir{-};
(2.5,-2.5)*{1};(7.5,-2.5)*{3};(12.5,-2.5)*{5};
(2.5,-7.5)*{2};(7.5,-7.5)*{6};(2.5,-12.5)*{4};
\endxy}~,
\]
with $w(T)=324123,~w(\us{st}(T))=426135$.
Note that for any $T\in\us{SSYT}$, one can obtain $\us{st}(T)$  by defining the standard order $<_T$ of boxes determined by $T$ as follows,
\begin{equation}\label{st}
(i,j)<_T(i',j')\mbox{ if }T(i,j)<T(i',j')\mbox{ or }T(i,j)=T(i',j')\mbox{ with }j<j'.
\end{equation}
Now renumbering the boxes of $T$ in their standard order gives $\us{st}(T)$. Meanwhile, one also have $w(\us{st}(T))=\us{st}(w(T))$.

Given $\la\in\mathcal {P}$, let $T_\la$ to be the unique standard tableau of shape $\la$ such that $\us{inv}(T_\la)=0$. For any $T\in\us{SSYT}(\la)$, we define the \textit{sign} of $T$ as
\begin{equation}\label{sign}
\us{sign}(T)=(-1)^{\ell(w(\us{\s{st}}(T)))}.
\end{equation}
One can easily see that $\ell(w(\us{st}(T)))=\sum\limits_{i<j}\la_i\la_j-\us{inv}(T)$, thus $\us{sign}(T)=\us{sign}(T_\la)(-1)^{\us{\s{inv}}(T)}$.

We also need the notion of \textit{rectification} of skew tableaux. Given $S\in\us{SSYT}(\la/\mu)$, the rectification $\us{rect}(S)$ is the tableau obtained from $S$ by carrying out the Sch$\ddot{\mbox{u}}$tzenberger's \textit{slide} repeatedly. Note that $w(\us{rect}(S))\sim_Kw(S)$ for any skew tableau $S$. On the other hand, given $T\in\us{SYT}(\mu),~S\in\us{SYT}(\la/\mu)$, one shifts all entries of $S$ by $n$ ,where $n=|\mu|$, and concatenates the resulting skew tableau with $T$ naturally. It gives a standard tableau of shape $\la$, denoted $(T)_S$. For instance,
\[T=\raisebox{1.8em}{\xy 0;/r.2pc/:
(0,0)*{};(10,0)*{}**\dir{-};
(0,-5)*{};(10,-5)*{}**\dir{-};
(0,-10)*{};(5,-10)*{}**\dir{-};
(0,-15)*{};(5,-15)*{}**\dir{-};
(0,0)*{};(0,-15)*{}**\dir{-};
(5,0)*{};(5,-15)*{}**\dir{-};
(10,0)*{};(10,-5)*{}**\dir{-};
(2.5,-2.5)*{1};(7.5,-2.5)*{3};
(2.5,-7.5)*{2};(2.5,-12.5)*{4};
\endxy}~,~S=\raisebox{1.8em}{\xy 0;/r.2pc/:
(5,0)*{};(10,0)*{}**\dir{-};
(0,-5)*{};(10,-5)*{}**\dir{-};
(0,-10)*{};(10,-10)*{}**\dir{-};
(0,-15)*{};(5,-15)*{}**\dir{-};
(0,-5)*{};(0,-15)*{}**\dir{-};
(5,0)*{};(5,-15)*{}**\dir{-};
(10,0)*{};(10,-10)*{}**\dir{-};
(7.5,-2.5)*{2};(2.5,-7.5)*{1};
(7.5,-7.5)*{4};(2.5,-12.5)*{3};
\endxy}~,~
(T)_S=\raisebox{1.8em}{\xy 0;/r.2pc/:
(0,0)*{};(15,0)*{}**\dir{-};
(0,-5)*{};(15,-5)*{}**\dir{-};
(0,-10)*{};(15,-10)*{}**\dir{-};
(0,-15)*{};(10,-15)*{}**\dir{-};
(0,0)*{};(0,-15)*{}**\dir{-};
(5,0)*{};(5,-15)*{}**\dir{-};
(10,0)*{};(10,-15)*{}**\dir{-};
(15,0)*{};(15,-10)*{}**\dir{-};
(2.5,-2.5)*{1};(7.5,-2.5)*{3};(12.5,-2.5)*{6};
(2.5,-7.5)*{2};(7.5,-7.5)*{5};(12.5,-7.5)*{8};
(2.5,-12.5)*{4};(7.5,-12.5)*{7};
\endxy}~.\]

Now we are in the position to describe the $q$-Hopf algebra structure of $\ms{PR}_q$ and $\ms{PR}'_q$ by modifying the classical case. First note that the duality between $\ms{PR}_q$ and $\ms{PR}'_q$ means
\[\lan c_q(T),q^{-\ell(\al)}(\al+J_q)\ran=\de_{P(\al),T},~\al\in\mathfrak{S},T\in\us{SYT}.\]
Hence, we can denote $q^{-\ell(\al)}(\al+J_q)$ by $c^*_q(T)$ dual to $c_q(T)$.

\begin{proposition}
For $(\ms{PR}_q,*,\De_q)$, we have
\[c^*_q(T_1)*c^*_q(T_2)=\sum\limits_{T'_1\cdot T'_2\in\us{\ti{SYT}}\atop \us{\ti{st}}(T_1')=T_1,~\us{\ti{st}}(T'_2)=T_2}q^{\us{\s{inv}}(T'_1,T'_2)}c^*_q(T'_1\cdot T'_2),\]
\[\De_q(c^*_q(T))=\sum\limits_{T=(T')_S}c^*_q(T')\ot c^*_q(\us{rect}(S)),\]
where for any $T,T'\in\us{SSYT}$ without repeated entries, $\us{inv}(T,T')$ is the number of pairs $(i,j)\in T\times T'$ such that $i>j$.

Dually for $(\ms{PR}'_q,*'_q,\De')$, we have
\begin{equation}\label{pr'm}
c_q(T_1)*'_q c_q(T_2)=\sum\limits_{T=(T_1)_S \atop \us{\ti{rect}}(S)=T_2}c_q(T),
\end{equation}
\[\De'(c_q(T))=\sum\limits_{T_1,T_2\in\us{\ti{SSYT}}\atop T=T_1\cdot T_2}q^{\us{\s{inv}}(T_1,T_2)}c_q(\us{st}(T_1))\ot c_q(\us{st}(T_2)).\]
\end{proposition}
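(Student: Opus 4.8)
The plan is to prove the two identities for $\ms{PR}'_q$ directly inside $\ms{MR}'_q$, where $*'_q$ and $\De'$ act explicitly on permutations, and then to read off the two identities for $\ms{PR}_q$ by transposing along the pairing $\lan c_q(T),c^*_q(T')\ran=\de_{T,T'}$ of the preceding duality theorem. The recurring mechanism is a uniform weight principle: after expanding everything into permutations, the coefficient of a word $w$ in any of these outputs is $q^{\ell(w)}$ times its $q=1$ (classical Poirier--Reutenauer) coefficient, so each $q$-identity reduces to the known classical one plus the evaluation of a single leftover exponent. The source of that exponent is the additive splitting of $\ell$: for a word $w$ cut (or interleaved) into a left part and a right part, $\ell(w)$ equals the sum of the two inner inversion numbers and the number of inversions crossing the cut.

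For $\De'(c_q(T))$ I would expand $\De'(c_q(T))=\sum_{P(w)=T}q^{\ell(w)}\sum_{i}\us{st}(w_1\cdots w_i)\ot\us{st}(w_{i+1}\cdots w_n)$ and reindex the pairs $(w,i)$ by the dot-product decompositions $T=T_1\cdot T_2$ with $T_1=P(w_1\cdots w_i)$ and $T_2=P(w_{i+1}\cdots w_n)$. Every decomposition of $T$ into $T_1,T_2\in\us{SSYT}$ with disjoint entry sets occurs exactly once, and the words realizing a fixed $(T_1,T_2)$ are precisely the concatenations $w'w''$ with $P(w')=T_1$, $P(w'')=T_2$. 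Splitting $\ell(w)=\ell(\us{st}(w'))+\ell(\us{st}(w''))+(\text{crossing inversions})$ and resumming over $w'$ and $w''$ through the bijection $w'\mapsto\us{st}(w')$ turns the two inner factors into $c_q(\us{st}(T_1))$ and $c_q(\us{st}(T_2))$. The only nontrivial point is that the crossing inversion count equals $\us{inv}(T_1,T_2)$: both count pairs $(a,b)$ with $a$ a left entry, $b$ a right entry and $a>b$, and since row insertion preserves the multiset of entries this statistic depends only on $T_1,T_2$. This produces the stated coproduct with no appeal to the classical case.

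For $c_q(T_1)*'_q c_q(T_2)=\sum_{P(u)=T_1,\,P(v)=T_2}q^{\ell(u)+\ell(v)}\,(u\shq\ol{v})$, with $u\in\mathfrak{S}_p$ and $v\in\mathfrak{S}_m$, I would rewrite the weight of each $q$-shuffle term as $q^{\ell(w)-\ell(u)-\ell(v)}$, so that the coefficient of $w$ collapses to $q^{\ell(w)}$ times the number of pairs $(u,v)$ interleaving to $w$. Because $u$ and $\ol{v}$ use disjoint alphabets, such a pair is unique when it exists, so the coefficient is $q^{\ell(w)}$ exactly when $P(w|_{[p]})=T_1$ and $P(\us{st}(w|_{[p+m]\setminus[p]}))=T_2$. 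At this point I invoke the classical Poirier--Reutenauer multiplication theorem, which identifies this set of $w$ as a union of full Knuth classes, namely those with $P(w)=(T_1)_S$ for some $S$ with $\us{rect}(S)=T_2$, each appearing once. Since the surviving weight is the constant $q^{\ell(w)}$ across each Knuth class, the words regroup into the basis elements $c_q(T)$ and yield (\ref{pr'm}).

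Finally, the two $\ms{PR}_q$-identities follow by transposition: the product $*$ on $\ms{PR}_q$ is adjoint to $\De'$ on $\ms{PR}'_q$, and $\De_q$ on $\ms{PR}_q$ is adjoint to $*'_q$ on $\ms{PR}'_q$, so the dot-product decompositions, standardizations and $\us{inv}$-weights transpose verbatim, with $c^*_q(T)=q^{-\ell(\al)}(\al+J_q)$ serving as the dual basis. I expect the main obstacle to be precisely this transposition's $q$-bookkeeping: the pairing on tensor products of $q$-Hopf algebras carries correction factors from the $q$-flip $\si$, and these must be reconciled with the normalizing $q^{-\ell}$ in $c^*_q$ so that the adjoint of $q^{\us{inv}(T_1,T_2)}$ reappears as $q^{\us{inv}(T'_1,T'_2)}$ on the correct summand. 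Equivalently, the crucial check is that the crossing-inversion statistic is self-adjoint under the pairing; once that is verified the four formulas match up.
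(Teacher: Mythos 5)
Your proposal is correct, and it is essentially the paper's argument run on the dual side: the paper computes directly in $\ms{PR}_q$, deducing the product formula from $*$ of $\ms{MR}$ (via $P(uv)=P(u)\cdot P(v)$ and compatibility of $P$ with standardization) and the coproduct formula from the explicit definition (\ref{deq}) of $\De_q$ together with the identity $\us{inv}(i,w)+\ell(w|_{\{1,\dots,i\}})+\ell(\us{st}(w|_{\{i+1,\dots,n\}}))=\ell(w)$, and then reads off the $\ms{PR}'_q$ formulas by duality, whereas you compute directly in $\ms{PR}'_q$ and transpose the other way. The two computations are adjoint to one another and rest on exactly the same ingredients: the additive splitting of the inversion number across a cut (your ``crossing inversions equal $\us{inv}(T_1,T_2)$'' is the same identity), the identity $(q^{\ell(u)}u)*'_q(q^{\ell(v)}v)=\sum_{w\in u*'v}q^{\ell(w)}w$, and the classical Poirier--Reutenauer theorem. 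The obstacle you flag at the end is not actually there: the pairing on tensor squares is the plain componentwise one (the $q$-flip $\si$ enters only the multiplication of tensor products, not the duality), so $\lan c_q(T),c^*_q(T_1)*c^*_q(T_2)\ran=\lan\De'(c_q(T)),c^*_q(T_1)\ot c^*_q(T_2)\ran$ matches coefficients immediately and the transposition carries no extra $q$-bookkeeping.
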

\begin{proof}
One can deduce the formula for $*$ from $*$ of $\ms{MR}$ when paying attention to the inversion numbers of words, as
$\us{st}(u)\sim_Kw(T_1),\us{st}(v)\sim_Kw(T_2)
\Rightarrow \us{st}(P(u))=P(\us{st}(u))=T_1,\us{st}(P(v))=P(\us{st}(v))=T_2$ and $P(uv)=P(u)\cdot P(v)$. The formula for $\De_q$ comes from (\ref{deq}) together with the identity $\us{inv}(i,w)+\ell(w|_{\{1,\dots,i\}})
+\ell(\us{st}(w|_{\{i+1,\dots,n\}}))=\ell(w)$.
\end{proof}

\subsection{$q$-analogue of the commuting diagram}
The most remarkable property of the PR-algebra $\ms{PR}'$ is that it can factor through the embedding $\te\circ\iota:\ms{NSym}\rightarrow\ms{MR}'$ and has the image lying in $\ms{Sym}$ when projecting to $\ms{QSym}$. That's the following commuting  diagram of Hopf algebras (See also \cite[Section 1]{DHT}),
\begin{equation}\label{dia}
\xymatrix@H=1.5em{
\ms{NSym }\ar@(ur,ul)[rr]^-{\phi} \ar@{>->}[d]^-\iota \ar@{>->}[r]& \ms{PR' } \ar@{>>}[r] \ar@{>->}[d]&\ms{Sym }\ar@{>->}[d]\\
\ms{MR }\ar[r]^-{\te\atop\sim} & \ms{MR' } \ar@{>>}[r]^-{\pi'}&\ms{QSym}
}
\end{equation}

On the other hand, it's nice to see that the image of $cl(T)$ in $\ms{Sym}$ is the \textit{Schur function} $s_\la$ with the following combinatorial definition,
\[s_\la=\sum\limits_{T\in\us{\s{SSYT}}(\la)}x^{\us{\s{cont}}(T)},~\la\in\mathcal {P},\]
where the \textit{content} of $T$, denoted $\us{cont}(T)$, is the weak composition $\ga$ where $\ga_i$ denotes the number of entries of $T$ with value $i$.
 Indeed, there exists a natural bijection as follows \cite[Prop. 2.15]{BLW}, \cite[Prop. 5.3.6]{Sag},
\[f:\us{SSYT}(\la)\rightarrow\{(T,\ga)\in\us{SYT}(\la)\times C_w:\ga\succeq c(T)\},~T\mapsto (\us{st}(T),\us{cont}(T)).\]
Now given $T\in\us{SYT}(\la)$,
\[\begin{split}
 \pi'|_{\ms{\s{PR'}}}(cl(T))&=\sum\limits_{P(w)=T}F_{c(w)}
=\sum\limits_{P(w)=T}F_{ c(Q(w))}=\sum\limits_{Q\in\us{\s{SYT}}(\la)}F_{c(Q)}
=\sum\limits_{Q\in\us{\s{SYT}}(\la)}\sum\limits_{\al\succeq c(Q)}M_\al\\
&=\sum\limits_{Q\in\us{\s{SYT}}(\la)}\sum\limits_{\ga\in C_w\atop\ga\succeq c(Q)}x^\ga=\sum\limits_{U\in\us{\s{SSYT}}(\la)}x^{\us{\s{cont}}(U)}=s_\la,
\end{split}\]
where we use the RSK-correspondence and the bijection $f$. The Schur functions form a orthonormal basis of $\ms{Sym}$ with respect to the bilinear form $(\cdot,\cdot)$.
Therefore, $\ms{PR}'$ is also referred as the algebra of \textit{free symmetric functions} $\ms{FSym}$ and $cl(T)$ is called the \textit{free Schur function}.

Now from proposition \ref{mr}, we have
\begin{proposition}
The following commuting diagram of $q$-Hopf algebras generalizes (\ref{dia}).
\begin{equation}\label{diaq}
\xymatrix@H=1.5em{
\ms{NSym }\ar@(ur,ul)[rr]^-{\phi} \ar@{>->}[d]^-{\iota_q} \ar@{>->}[r]& {\ms{PR}'_q} \ar@{.>>}[r]|-? \ar@{>->}[d]&{\ms{Sym}_q}\ar@{>->}[d]\\
{\ms{MR}_q} \ar[r]^-{\te_q\atop\sim} & \ms{MR}'_q \ar@{>>}[r]^-{\pi'_q}&\ms{QSym}_q
}
\end{equation}
where the question mark means we can't judge whether the image of $\pi'_q$ restricting on $\ms{PR}'$ lies in $\ms{Sym}_q$, for the difficulty to figure out the kernel of $\phi$ in general.
\end{proposition}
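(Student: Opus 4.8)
The plan is to establish every solid arrow of the diagram and then to isolate precisely why the dotted arrow on the right resists completion. Most of the solid structure is free: the bottom row and the leftmost vertical $\iota_q$ are furnished by parts (2),(3) of Theorem~\ref{mr} (which make $\te_q$ an isomorphism and $\iota_q,\pi'_q$ morphisms of graded $q$-Hopf algebras), the middle and right verticals are the defining inclusions $\ms{PR}'_q\hookrightarrow\ms{MR}'_q$ and $\ms{Sym}_q\hookrightarrow\ms{QSym}_q$, and the outer $\phi$-triangle is exactly part (4), $\phi=\pi'_q\circ\te_q\circ\iota_q$. Hence the only genuinely new assertion among the solid arrows is the top-left embedding $\ms{NSym}\hookrightarrow\ms{PR}'_q$ making the left square commute, which amounts to showing that $\te_q\circ\iota_q$ already lands inside the $q$-Hopf subalgebra $\ms{PR}'_q$.

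To see this I would evaluate $\te_q\circ\iota_q$ on the ribbon basis $\{R_\al\}_{\al\in C}$. Since $\iota_q(R_\al)=D_\al=\sum_{c(w)=\al}w$ and $\te_q(w)=q^{\ell(w)}w^{-1}$, we get $\te_q(\iota_q(R_\al))=\sum_{c(w)=\al}q^{\ell(w)}w^{-1}$. I would then reindex by $v=w^{-1}$ using three facts: the inversion number is fixed under inversion, so $\ell(w^{-1})=\ell(w)$; the descent composition reads off the recording tableau, $c(w)=c(Q(w))$; and the symmetry of the RSK correspondence gives $P(w^{-1})=Q(w)$. Together these turn the constraint $c(w)=\al$ into $c(P(v))=\al$ and the summand into $q^{\ell(v)}v$, whence
\[
\te_q(\iota_q(R_\al))=\sum_{v:\,c(P(v))=\al}q^{\ell(v)}\,v=\sum_{T\in\us{SYT}:\,c(T)=\al}\ \sum_{P(v)=T}q^{\ell(v)}\,v=\sum_{T\in\us{SYT}:\,c(T)=\al}c_q(T).
\]
The right-hand side lies in $\ms{PR}'_q=\bigoplus_T k\cdot c_q(T)$, which is the desired factorization.

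Granting this, the corestriction $\ms{NSym}\to\ms{PR}'_q$, $R_\al\mapsto\sum_{c(T)=\al}c_q(T)$, is a morphism of graded $q$-Hopf algebras, because $\te_q$, $\iota_q$ are such and $\ms{PR}'_q$ is a $q$-Hopf subalgebra of $\ms{MR}'_q$; it is injective since composing it with the inclusion $\ms{PR}'_q\hookrightarrow\ms{MR}'_q$ returns the injection $\te_q\circ\iota_q$. This gives the left square, and the commutativity of the remaining squares is then formal. Specializing $q=1$ collapses $\te_q$ and $\De_q$, together with the underlying $q$-shuffle, to their classical counterparts and recovers diagram~(\ref{dia}), making the word ``generalizes'' precise.

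The step I expect to be the genuine obstacle is the dotted surjection $\ms{PR}'_q\to\ms{Sym}_q$ bearing the question mark. Filling it in would require $\pi'_q(\ms{PR}'_q)\subseteq\ms{Sym}_q=\us{Im}\,\phi$; by part (3) of Proposition~\ref{bp} an element of $\ms{QSym}_q$ belongs to $\ms{Sym}_q$ exactly when it is orthogonal to $\us{Ker}\,\phi$ under $\lan\cdot,\cdot\ran$, so the question reduces to controlling the radical $\us{Ker}\,\phi$. As noted after Proposition~\ref{bp}, that radical is ``ruleless'' for generic $q$ and understood only at $q=\pm1$; hence for arbitrary $q$ I would leave the right-hand arrow dotted, with the surrounding solid commutativity being all that the present methods secure.
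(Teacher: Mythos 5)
Your proposal is correct and follows essentially the same route as the paper: the only substantive point is the factorization of $\te_q\circ\iota_q$ through $\ms{PR}'_q$, which you establish by the identical computation on the ribbon basis, $\te_q\circ\iota_q(R_\al)=\sum_{c(w)=\al}q^{\ell(w)}w^{-1}=\sum_{c(T)=\al}c_q(T)$, using $\ell(w)=\ell(w^{-1})$, $c(w)=c(Q(w))$, and the RSK symmetry $(P(w^{-1}),Q(w^{-1}))=(Q(w),P(w))$. Your surrounding remarks on the remaining arrows and on why the dotted surjection stays dotted match the paper's discussion.
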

\begin{proof}
In fact,
\[
\begin{split}
\te_q\circ\iota_q(R_\al)&=\te_q(D_\al)=\sum\limits_{c(w)=\al}q^{\ell(w)}w^{-1}
=\sum\limits_{c(Q(w))=\al}q^{\ell(w)}w^{-1}\\
&=\sum\limits_{c(P(w))=\al}q^{\ell(w)}w
=\sum\limits_{T\in\us{\ti{SYT}}\atop c(T)=\al}\sum\limits_{P(w)=T}q^{\ell(w)}w=\sum\limits_{T\in\us{\ti{SYT}}\atop c(T)=\al}c_q(T)\in\ms{PR}'_q,
\end{split}
\]
where we use the symmetric theorem \cite[\S4.1]{Ful}, $(P(w^{-1}),Q(w^{-1}))=(Q(w),P(w))$ and the identity $c(w)=c(Q(w))$ for any $w\in\mathfrak{S}$.
\end{proof}
Though we can't eliminate the question mark for $q$ in general, the answer is affirmative for the odd case, due to the relation (\ref{odd}). In the forthcoming section, we will use the $q$-Hopf algebra $\ms{PR}'_q$ to realize the odd Schur functions, as a basis of the odd symmetric functions.

\section{Application to the odd symmetric functions}
 Motivated by the ``oddification'' of Khovanov homology, Ellis, Khovanov and Lauda introduced the ring of odd symmetric functions and defined three versions of odd Schur functions in a sequence of papers \cite{EK}, \cite{EKL}, \cite{Ell}. In the last one, Ellis proved that all three definitions coincide to obtain the odd Littlewood-Richardson rule.

\subsection{Odd Schur functions}
 Our approach here bases on the $q$-Hopf algebra structure $\ms{PR}'_q$, and derives the odd Schur functions naturally. In this section, we write $\pi'_q(c_q(T))$, $\ms{Sym}_q$ as $\vs_T$ and $\ms{OSym}$ respectively for $q=-1$. First we need the following ``shuffle'' lemma concerning about two tableaux of row shape.
 \begin{lemma}\label{shu}
  For fixed $n\in\mathbb{Z}^+$, one can divide $[n]$ into two subsets $\{i_1<\dots<i_p\},~\{j_1<\dots<j_q\}$ with $p+q=n$. Now denote $S_{p,q}^n$ the set of all splits of $[n]$ into two parts of size $p$ and $q$ respectively, then

(1) for any fixed pair $(p,q),~p+q=n$, the map
\[S_{p,q}^n\rightarrow \us{SYT}_n,~(\{i_1<\dots<i_p\},~\{j_1<\dots<j_q\})\mapsto
~\xy 0;/r.15pc/:
(0,2.5)*{};(20,2.5)*{}**\dir{-};
(0,-2.5)*{};(20,-2.5)*{}**\dir{-};
(0,2.5)*{};(0,-2.5)*{}**\dir{-};
(5,2.5)*{};(5,-2.5)*{}**\dir{-};
(15,2.5)*{};(15,-2.5)*{}**\dir{-};
(20,2.5)*{};(20,-2.5)*{}**\dir{-};
(2.5,0)*{\stt{i_1}};(10,0)*{\dots};
(17.5,0)*{\stt{i_p}};
\endxy\cdot\xy 0;/r.15pc/:
(0,2.5)*{};(20,2.5)*{}**\dir{-};
(0,-2.5)*{};(20,-2.5)*{}**\dir{-};
(0,2.5)*{};(0,-2.5)*{}**\dir{-};
(5,2.5)*{};(5,-2.5)*{}**\dir{-};
(15,2.5)*{};(15,-2.5)*{}**\dir{-};
(20,2.5)*{};(20,-2.5)*{}**\dir{-};
(2.5,0)*{\stt{j_1}};(10,0)*{\dots};
(17.5,0)*{\stt{j_q}};
\endxy~\]
is injective.

(2) Given $T\in\us{SYT}(m,n-m),~n\in[m,2m]$, for any $p\in[n-m,m]$ and $q=n-p$, there exists a unique split $(\{i_1<\dots<i_p\},~\{j_1<\dots<j_q\})\in S_{p,q}^n$ such that $T=~\xy 0;/r.15pc/:
(0,2.5)*{};(20,2.5)*{}**\dir{-};
(0,-2.5)*{};(20,-2.5)*{}**\dir{-};
(0,2.5)*{};(0,-2.5)*{}**\dir{-};
(5,2.5)*{};(5,-2.5)*{}**\dir{-};
(15,2.5)*{};(15,-2.5)*{}**\dir{-};
(20,2.5)*{};(20,-2.5)*{}**\dir{-};
(2.5,0)*{\stt{i_1}};(10,0)*{\dots};
(17.5,0)*{\stt{i_p}};
\endxy\cdot\xy 0;/r.15pc/:
(0,2.5)*{};(20,2.5)*{}**\dir{-};
(0,-2.5)*{};(20,-2.5)*{}**\dir{-};
(0,2.5)*{};(0,-2.5)*{}**\dir{-};
(5,2.5)*{};(5,-2.5)*{}**\dir{-};
(15,2.5)*{};(15,-2.5)*{}**\dir{-};
(20,2.5)*{};(20,-2.5)*{}**\dir{-};
(2.5,0)*{\stt{j_1}};(10,0)*{\dots};
(17.5,0)*{\stt{j_q}};
\endxy~$.

(3) If $T=~\xy 0;/r.15pc/:
(0,2.5)*{};(20,2.5)*{}**\dir{-};
(0,-2.5)*{};(20,-2.5)*{}**\dir{-};
(0,2.5)*{};(0,-2.5)*{}**\dir{-};
(5,2.5)*{};(5,-2.5)*{}**\dir{-};
(15,2.5)*{};(15,-2.5)*{}**\dir{-};
(20,2.5)*{};(20,-2.5)*{}**\dir{-};
(2.5,0)*{\stt{i_1}};(10,0)*{\dots};
(17.5,0)*{\stt{i_p}};
\endxy\cdot\xy 0;/r.15pc/:
(0,2.5)*{};(20,2.5)*{}**\dir{-};
(0,-2.5)*{};(20,-2.5)*{}**\dir{-};
(0,2.5)*{};(0,-2.5)*{}**\dir{-};
(5,2.5)*{};(5,-2.5)*{}**\dir{-};
(15,2.5)*{};(15,-2.5)*{}**\dir{-};
(20,2.5)*{};(20,-2.5)*{}**\dir{-};
(2.5,0)*{\stt{j_1}};(10,0)*{\dots};
(17.5,0)*{\stt{j_q}};
\endxy~\in\us{SYT}(m,n-m)$, where $n\in[m,2m]$ and $p+q=n$, then
\begin{equation}\label{sig}
(-1)^{\us{\s{inv}}\lb~\xy 0;/r.15pc/:
(0,2.5)*{};(20,2.5)*{}**\dir{-};
(0,-2.5)*{};(20,-2.5)*{}**\dir{-};
(0,2.5)*{};(0,-2.5)*{}**\dir{-};
(5,2.5)*{};(5,-2.5)*{}**\dir{-};
(15,2.5)*{};(15,-2.5)*{}**\dir{-};
(20,2.5)*{};(20,-2.5)*{}**\dir{-};
(2.5,0)*{\stt{i_1}};(10,0)*{\dots};
(17.5,0)*{\stt{i_p}};
\endxy~,~\xy 0;/r.15pc/:
(0,2.5)*{};(20,2.5)*{}**\dir{-};
(0,-2.5)*{};(20,-2.5)*{}**\dir{-};
(0,2.5)*{};(0,-2.5)*{}**\dir{-};
(5,2.5)*{};(5,-2.5)*{}**\dir{-};
(15,2.5)*{};(15,-2.5)*{}**\dir{-};
(20,2.5)*{};(20,-2.5)*{}**\dir{-};
(2.5,0)*{\stt{j_1}};(10,0)*{\dots};
(17.5,0)*{\stt{j_q}};
\endxy~\rb}=(-1)^{(p-n+m)(n-m)}\us{sign}(T).
\end{equation}
 \end{lemma}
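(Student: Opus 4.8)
The plan is to pass everything through the Robinson--Schensted--Knuth correspondence. The first observation is that for the two single-row tableaux one has $[i_1\cdots i_p]\cdot[j_1\cdots j_q]=P(w)$, where $w=i_1\cdots i_p\,j_1\cdots j_q\in\mathfrak{S}_n$ is the concatenation of the two increasing runs; since $w$ is a union of two increasing subsequences it contains no decreasing subsequence of length three, so $P(w)$ has at most two rows, which is why only the shapes $(m,n-m)$ arise. As $w$ is increasing on $[1,p]$ and on $[p+1,n]$, its only possible descent is at $p$, so $\us{des}(w)\subseteq\{p\}$, and by the recalled identity $\us{des}(w)=\us{des}(Q(w))$ the recording tableau $Q(w)$ is a two-row standard tableau with $\us{des}(Q(w))\subseteq\{p\}$.

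The combinatorial heart of (1) and (2) is the claim that, for $n-m\le p\le m$, there is a \emph{unique} standard tableau $Q_p$ of shape $(m,n-m)$ with $\us{des}(Q_p)\subseteq\{p\}$, namely the one whose second row is $\{p+1,\dots,p+(n-m)\}$. I would prove this by encoding a two-row SYT as a lattice word in the letters $R$ (entry placed in row $1$) and $D$ (row $2$): a descent is exactly an occurrence of the factor $RD$, so forbidding every such factor except at position $p$ forces all the $D$'s into a single block immediately after position $p$, the ballot condition being precisely $p\ge n-m$. Granting this, (1) is immediate: two splits with the same image $T$ give permutations $w,w'$ with $P(w)=P(w')=T$ of equal shape, hence $Q(w)=Q(w')=Q_p$, so $w=w'$ by the bijectivity of RSK and the splits coincide. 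For (2), given $T\in\us{SYT}(m,n-m)$ and $p\in[n-m,m]$, I set $w:=\us{RSK}^{-1}(T,Q_p)$; then $P(w)=T$ and $\us{des}(w)=\us{des}(Q_p)\subseteq\{p\}$, so $w$ breaks into two increasing runs of lengths $p$ and $q=n-p$, yielding the split, which is unique by (1).

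For the sign identity (3), write $r=n-m$ and, for disjoint $X,Y\subseteq[n]$, put $\us{inv}(X,Y)=\#\{(x,y)\in X\times Y:x>y\}$. The left-hand exponent equals the number of inversions of $w$, i.e. $\us{inv}(I,J)=\ell(w)$ with $I=\{i_1,\dots,i_p\}$, $J=\{j_1,\dots,j_q\}$. Combining this with the identity $\ell(w(T))=m(n-m)-\us{inv}(T)$ recalled before the statement, so that $\us{sign}(T)=(-1)^{m(n-m)-\us{inv}(T)}$, the assertion reduces modulo $2$ to the congruence $\ell(w)-\us{inv}(T)\equiv (n-m)q$. To establish it I use two facts about inserting $j_1<\cdots<j_q$ into the row $I$: each $j_k$ remains in row $1$ forever (a larger later letter can never bump it), so the second row is a set $B\subseteq I$ with $|B|=r$ and the first row is $R_1=A\sqcup J$ with $A:=I\setminus B$; and the bumps are performed exactly by the prefix $j_1,\dots,j_r$, with $j_k$ ejecting $b_k=\min\{i\in I\setminus\{b_1,\dots,b_{k-1}\}:i>j_k\}$. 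Expanding $\ell(w)=\us{inv}(A,J)+\us{inv}(B,J)$ and $\us{inv}(T)=\us{inv}(A,B)+\us{inv}(J,B)$ and reducing modulo $2$ collapses their difference to $\us{inv}(A,[n]\setminus A)+rq$.

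The decisive and most delicate point is then that $\us{inv}(A,[n]\setminus A)$ is even. Pairing $b_k$ with $j_k$, the pair $\{j_k,b_k\}$ contributes an odd amount to $\us{inv}(A,[n]\setminus A)$ only through elements $a\in A$ with $j_k<a<b_k$; but such an $a$ is an element of $I$ lying strictly between $j_k$ and $b_k$ that is never bumped, hence still available when $j_k$ is inserted, contradicting the minimality defining $b_k$. The remaining letters $j_{r+1},\dots,j_q$ append and therefore exceed every element of $A$, so they contribute nothing either. Hence $\us{inv}(A,[n]\setminus A)\equiv0$, the congruence $\ell(w)-\us{inv}(T)\equiv (n-m)q$ follows, and (3) is proved. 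I expect this parity bookkeeping, together with the ballot uniqueness of $Q_p$, to be the only subtle steps; the rest is a routine unwinding of the RSK insertion.
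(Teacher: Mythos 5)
Your proposal is correct, but both halves of it run along genuinely different lines from the paper's proof. For (1) and (2) the paper invokes Fulton's row--bumping lemma: a tableau factors uniquely as (row tableau of length $p$) $\leftarrow$ (weakly increasing word) once one observes that the skew piece $(m,n-m)/(p)$ is a horizontal strip; you instead push everything through RSK, noting $\mathrm{des}(w)\subseteq\{p\}$ for $w=i_1\cdots i_p j_1\cdots j_q$ and proving via the ballot/lattice-word encoding that there is a unique two-row standard tableau $Q_p$ of shape $(m,n-m)$ with descent set contained in $\{p\}$ (existing exactly when $n-m\le p\le m$). Both are sound; yours makes the range of admissible $p$ appear as the ballot condition rather than as the horizontal-strip condition. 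For (3) the divergence is more substantial: the paper connects the splits for consecutive values of $p$ by reverse jeu de taquin slides through the second row and tracks a sign change of $(-1)^{n-m}$ at each slide, so the identity follows by a one-step recursion from the base case $p=n-m$; you instead reduce the claim to the single congruence $\ell(w)-\mathrm{inv}(T)\equiv(n-m)q\pmod 2$ and verify it by decomposing $[n]$ into the never-bumped set $A$, the bumped set $B$ and $J$, with the key point that $\mathrm{inv}(A,[n]\setminus A)$ is even because the minimality of each bumped element $b_k$ over $j_k$ forbids any $a\in A$ strictly between them, while the appending letters $j_{r+1},\dots,j_q$ dominate all of $A$. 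I checked this parity bookkeeping (including the reduction $(p-r)r+mr\equiv rq$ with $r=n-m$) and it is right. The paper's slide argument is shorter and exhibits the uniform $(-1)^{n-m}$ increment directly; your argument is more elementary in that it never leaves row insertion and produces a closed-form congruence that can be verified termwise, at the cost of heavier notation.
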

\begin{proof}
What we need is an important proposition in \cite[\S1.1]{Ful}. Roughly speaking, when row inserting $x_1,\dots,x_k$ in a tableau $T$ successively, if $x_1\leq\dots\leq x_k$ (resp. $x_1>\dots>x_k$), then
no two of the new boxes are in the same column (resp. row). Conversely, given a tableau $U$ of shape $\la$ and a skew shape $\la/\mu,~k=|\la/\mu|$, if no two of boxes in $\la/\mu$ are in the same column (resp. row), then there exists a
unique tableau $T$ of shape $\mu$, and unique $x_1\leq\dots\leq x_k$ (resp. $x_1>\dots>x_k$) such that $U=(T\leftarrow x_1)\leftarrow\cdots\leftarrow x_k$.

Now return to our situation. First note that $T=~\xy 0;/r.15pc/:
(0,2.5)*{};(20,2.5)*{}**\dir{-};
(0,-2.5)*{};(20,-2.5)*{}**\dir{-};
(0,2.5)*{};(0,-2.5)*{}**\dir{-};
(5,2.5)*{};(5,-2.5)*{}**\dir{-};
(15,2.5)*{};(15,-2.5)*{}**\dir{-};
(20,2.5)*{};(20,-2.5)*{}**\dir{-};
(2.5,0)*{\stt{i_1}};(10,0)*{\dots};
(17.5,0)*{\stt{i_p}};
\endxy~\cdot~\xy 0;/r.15pc/:
(0,2.5)*{};(20,2.5)*{}**\dir{-};
(0,-2.5)*{};(20,-2.5)*{}**\dir{-};
(0,2.5)*{};(0,-2.5)*{}**\dir{-};
(5,2.5)*{};(5,-2.5)*{}**\dir{-};
(15,2.5)*{};(15,-2.5)*{}**\dir{-};
(20,2.5)*{};(20,-2.5)*{}**\dir{-};
(2.5,0)*{\stt{j_1}};(10,0)*{\dots};
(17.5,0)*{\stt{j_q}};
\endxy~=\lb~\xy 0;/r.15pc/:
(0,2.5)*{};(20,2.5)*{}**\dir{-};
(0,-2.5)*{};(20,-2.5)*{}**\dir{-};
(0,2.5)*{};(0,-2.5)*{}**\dir{-};
(5,2.5)*{};(5,-2.5)*{}**\dir{-};
(15,2.5)*{};(15,-2.5)*{}**\dir{-};
(20,2.5)*{};(20,-2.5)*{}**\dir{-};
(2.5,0)*{\stt{i_1}};(10,0)*{\dots};
(17.5,0)*{\stt{i_p}};
\endxy~\leftarrow j_1\rb\leftarrow\cdots\leftarrow j_q$. Since
  $j_1<\cdots<j_q$, the new boxes are in distinct columns, and $T$ has shape $\la$ with at most two parts. Now the shape $\mu$ is fixed to be $(p)$, the uniqueness in the proposition above proves (1). For (2), fix $T\in\us{SYT}(m,n-m),~m\leq n\leq2m$ and $p\in[n-m,m]$. Take the skew subtableau of $T$ in $(m,n-m)/(p)$, then no two boxes of it are in the same column, illustrated by the figure below.
  \vspace{-1em}
  \[\xy 0;/r.2pc/:
(0,0)*{};(60,0)*{}**\dir{-};
(0,-5)*{};(60,-5)*{}**\dir{-};
(0,-10)*{};(20,-10)*{}**\dir{-};
(0,0)*{};(0,-10)*{}**\dir{-};
(5,0)*{};(5,-10)*{}**\dir{-};
(15,0)*{};(15,-10)*{}**\dir{-};
(20,0)*{};(20,-10)*{}**\dir{-};
(25,0)*{};(25,-5)*{}**\dir{-};
(35,0)*{};(35,-5)*{}**\dir{-};
(40,0)*{};(40,-5)*{}**\dir{-};
(45,0)*{};(45,-5)*{}**\dir{-};
(55,0)*{};(55,-5)*{}**\dir{-};
(60,0)*{};(60,-5)*{}**\dir{-};
(2.5,-2.5)*{*};(10,-2.5)*{\dots};(17.5,-2.5)*{*};
(22.5,-2.5)*{*};(30,-2.5)*{\dots};(37.5,-2.5)*{*};
(50,-2.5)*{\dots};(10,-7.5)*{\dots};
(20,3.5)*{\overbrace{\hspace{8.6em}}^{p}};
(50,3.5)*{\overbrace{\hspace{4.2em}}^{m-p}};
(10,-13.5)*{\underbrace{\hspace{4.2em}}_{n-m}};
\endxy\]
Hence the previous proposition gives us the unique product decomposition $T=~\xy 0;/r.15pc/:
(0,2.5)*{};(20,2.5)*{}**\dir{-};
(0,-2.5)*{};(20,-2.5)*{}**\dir{-};
(0,2.5)*{};(0,-2.5)*{}**\dir{-};
(5,2.5)*{};(5,-2.5)*{}**\dir{-};
(15,2.5)*{};(15,-2.5)*{}**\dir{-};
(20,2.5)*{};(20,-2.5)*{}**\dir{-};
(2.5,0)*{\stt{i_1}};(10,0)*{\dots};
(17.5,0)*{\stt{i_p}};
\endxy\cdot\xy 0;/r.15pc/:
(0,2.5)*{};(20,2.5)*{}**\dir{-};
(0,-2.5)*{};(20,-2.5)*{}**\dir{-};
(0,2.5)*{};(0,-2.5)*{}**\dir{-};
(5,2.5)*{};(5,-2.5)*{}**\dir{-};
(15,2.5)*{};(15,-2.5)*{}**\dir{-};
(20,2.5)*{};(20,-2.5)*{}**\dir{-};
(2.5,0)*{\stt{j_1}};(10,0)*{\dots};
(17.5,0)*{\stt{j_q}};
\endxy~$.

For (3) we apply $2m-n$ times the reverse slides to $T$ while always choosing the outside corner in the second row of the intermediate tableau, then the resulting tableau is a skew one of shape $(n,n)/(2m-n)$. One can check \cite[\S1.2]{Ful} for details of reverse slides. Let's illustrate it by the following example,
\[T=~\raisebox{1.3em}{\xy 0;/r.2pc/:
(0,0)*{};(25,0)*{}**\dir{-};
(0,-5)*{};(25,-5)*{}**\dir{-};
(0,-10)*{};(10,-10)*{}**\dir{-};
(0,0)*{};(0,-10)*{}**\dir{-};
(5,0)*{};(5,-10)*{}**\dir{-};
(10,0)*{};(10,-10)*{}**\dir{-};
(15,0)*{};(15,-5)*{}**\dir{-};
(20,0)*{};(20,-5)*{}**\dir{-};
(25,0)*{};(25,-5)*{}**\dir{-};
(2.5,-2.5)*{\mbox{\textbf{1}}};(7.5,-2.5)*{2};(12.5,-2.5)*{4};(17.5,-2.5)*{5};(22.5,-2.5)*{7};
(2.5,-7.5)*{\mbox{\textbf{3}}};(7.5,-7.5)*{\mbox{\textbf{6}}};(12.5,-7.5)*{\bullet};
\endxy}~\rightsquigarrow~\raisebox{1.3em}{\xy 0;/r.2pc/:
(0,0)*{};(25,0)*{}**\dir{-};
(0,-5)*{};(25,-5)*{}**\dir{-};
(0,-10)*{};(15,-10)*{}**\dir{-};
(0,0)*{};(0,-10)*{}**\dir{-};
(5,0)*{};(5,-10)*{}**\dir{-};
(10,0)*{};(10,-10)*{}**\dir{-};
(15,0)*{};(15,-10)*{}**\dir{-};
(20,0)*{};(20,-5)*{}**\dir{-};
(25,0)*{};(25,-5)*{}**\dir{-};
(2.5,-2.5)*{*};(7.5,-2.5)*{\mbox{\textbf{2}}};(12.5,-2.5)*{\mbox{\textbf{4}}};(17.5,-2.5)*{5};(22.5,-2.5)*{7};
(2.5,-7.5)*{1};(7.5,-7.5)*{3};(12.5,-7.5)*{\mbox{\textbf{6}}};(17.5,-7.5)*{\bullet};
\endxy}~\rightsquigarrow~\raisebox{1.3em}{\xy 0;/r.2pc/:
(0,0)*{};(25,0)*{}**\dir{-};
(0,-5)*{};(25,-5)*{}**\dir{-};
(0,-10)*{};(20,-10)*{}**\dir{-};
(0,0)*{};(0,-10)*{}**\dir{-};
(5,0)*{};(5,-10)*{}**\dir{-};
(10,0)*{};(10,-10)*{}**\dir{-};
(15,0)*{};(15,-10)*{}**\dir{-};
(20,0)*{};(20,-10)*{}**\dir{-};
(25,0)*{};(25,-5)*{}**\dir{-};
(2.5,-2.5)*{*};(7.5,-2.5)*{*};(12.5,-2.5)*{\mbox{\textbf{2}}};(17.5,-2.5)*{\mbox{\textbf{5}}};(22.5,-2.5)*{\mbox{\textbf{7}}};
(2.5,-7.5)*{1};(7.5,-7.5)*{3};(12.5,-7.5)*{4};(17.5,-7.5)*{6};(22.5,-7.5)*{\bullet};
\endxy}~\rightsquigarrow~\raisebox{1.3em}{\xy 0;/r.2pc/:
(0,0)*{};(25,0)*{}**\dir{-};
(0,-5)*{};(25,-5)*{}**\dir{-};
(0,-10)*{};(25,-10)*{}**\dir{-};
(0,0)*{};(0,-10)*{}**\dir{-};
(5,0)*{};(5,-10)*{}**\dir{-};
(10,0)*{};(10,-10)*{}**\dir{-};
(15,0)*{};(15,-10)*{}**\dir{-};
(20,0)*{};(20,-10)*{}**\dir{-};
(25,0)*{};(25,-10)*{}**\dir{-};
(2.5,-2.5)*{*};(7.5,-2.5)*{*};(12.5,-2.5)*{*};(17.5,-2.5)*{2};(22.5,-2.5)*{5};
(2.5,-7.5)*{1};(7.5,-7.5)*{3};(12.5,-7.5)*{4};(17.5,-7.5)*{6};(22.5,-7.5)*{7};
\endxy}~,\]
where $n=7,~m=5$. The bullets mark the outside corners and the numbers in bold show the routes of reverse slides. Note that in the whole process, given any $p\in[n-m,m]$, there exists exactly one step at which the intermediate tableau $U_p$ has $p$ boxes in the second row. Suppose $w(U_p)=i'_1\cdots i'_pj'_1\cdots j'_q$, then $\ell(w(U_p))=\us{inv}\lb~\xy 0;/r.15pc/:
(0,2.5)*{};(20,2.5)*{}**\dir{-};
(0,-2.5)*{};(20,-2.5)*{}**\dir{-};
(0,2.5)*{};(0,-2.5)*{}**\dir{-};
(5,2.5)*{};(5,-2.5)*{}**\dir{-};
(15,2.5)*{};(15,-2.5)*{}**\dir{-};
(20,2.5)*{};(20,-2.5)*{}**\dir{-};
(2.5,0)*{\stt{i'_1}};(10,0)*{\dots};
(17.5,0)*{\stt{i'_p}};
\endxy~,~\xy 0;/r.15pc/:
(0,2.5)*{};(20,2.5)*{}**\dir{-};
(0,-2.5)*{};(20,-2.5)*{}**\dir{-};
(0,2.5)*{};(0,-2.5)*{}**\dir{-};
(5,2.5)*{};(5,-2.5)*{}**\dir{-};
(15,2.5)*{};(15,-2.5)*{}**\dir{-};
(20,2.5)*{};(20,-2.5)*{}**\dir{-};
(2.5,0)*{\stt{j'_1}};(10,0)*{\dots};
(17.5,0)*{\stt{j'_q}};
\endxy~\rb$. Now as slides preserve the Knuth equivalence classes of words, we have
$T=P(w(U_p))=~\xy 0;/r.15pc/:
(0,2.5)*{};(20,2.5)*{}**\dir{-};
(0,-2.5)*{};(20,-2.5)*{}**\dir{-};
(0,2.5)*{};(0,-2.5)*{}**\dir{-};
(5,2.5)*{};(5,-2.5)*{}**\dir{-};
(15,2.5)*{};(15,-2.5)*{}**\dir{-};
(20,2.5)*{};(20,-2.5)*{}**\dir{-};
(2.5,0)*{\stt{i'_1}};(10,0)*{\dots};
(17.5,0)*{\stt{i'_p}};
\endxy\cdot\xy 0;/r.15pc/:
(0,2.5)*{};(20,2.5)*{}**\dir{-};
(0,-2.5)*{};(20,-2.5)*{}**\dir{-};
(0,2.5)*{};(0,-2.5)*{}**\dir{-};
(5,2.5)*{};(5,-2.5)*{}**\dir{-};
(15,2.5)*{};(15,-2.5)*{}**\dir{-};
(20,2.5)*{};(20,-2.5)*{}**\dir{-};
(2.5,0)*{\stt{j'_1}};(10,0)*{\dots};
(17.5,0)*{\stt{j'_q}};
\endxy~$. By the uniqueness in (1), we have
$\{i'_1<\cdots<i'_p\}=\{i_1<\cdots<i_p\},~\{j'_1<\cdots<j'_q\}=\{j_1<\cdots<j_q\}$.
Meanwhile, the key observation is the following identity  \[(-1)^{\ell(w(U_{p+1}))}=(-1)^{\ell(w(U_p))+n-m}\]
for any $p\in[n-m,m-1]$, due to chasing the number whose position changes from the first row to the second one when boxes slide.
Finally by the definition (\ref{sign}) and the previous identity, we get (\ref{sig}) as desired.
\end{proof}

Now we are in the position to prove that
 \begin{theorem}\label{osy}
 $\vs_T$ lies in $\ms{OSym}$ for any $T\in\us{SYT}$, thus there exists a surjection from $\ms{PR}'_q$ to $\ms{Sym}_q$ in (\ref{diaq}) for the odd case.
 \end{theorem}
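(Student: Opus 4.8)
My plan is to recast the membership $\vs_T\in\ms{OSym}$ as an orthogonality statement. By Proposition \ref{bp}(3) the odd symmetric functions $\ms{OSym}=\ms{Sym}_{-1}=\us{Im}\,\phi$ are precisely the orthogonal complement $(\us{Ker}\,\phi)^\perp$ inside $\ms{QSym}_{-1}$ with respect to the canonical pairing $\lan\cdot,\cdot\ran\colon\ms{QSym}_{-1}\times\ms{NSym}\to k$. Since $\ms{PR}'_q$ is spanned by the $c_q(T)$ and the bottom composite of (\ref{diaq}) already has image $\ms{Sym}_{-1}$, the asserted surjection is equivalent to the single claim that $\lan\vs_T,g\ran=0$ for every $T\in\us{SYT}$ and every $g\in\us{Ker}\,\phi$. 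This is the statement I would prove.

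I then reduce to the generators of the kernel. By the odd relation (\ref{odd}) of \cite{EK}, $\us{Ker}\,\phi$ is the two-sided ideal generated by the degree-$(n+m)$ elements $r_{n,m}$ displayed there. Because multiplication in $\ms{NSym}$ is dual to the coproduct $\De'$ on $\ms{QSym}_{-1}$ (up to the invertible $q$-twists of the $q$-Hopf duality of Proposition \ref{qdual}), for any $u,v\in\ms{NSym}$ one has $\lan\vs_T,u\,r_{n,m}\,v\ran=\sum(\text{scalar})\lan\vs_T^{(1)},u\ran\lan\vs_T^{(2)},r_{n,m}\ran\lan\vs_T^{(3)},v\ran$ summed over the iterated coproduct $(\De'\ot\us{id})\De'(\vs_T)$, so it suffices that each interior factor $\vs_T^{(2)}$ pair trivially with $r_{n,m}$. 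As $\pi'_q$ is a morphism of $q$-Hopf algebras, the coproduct formula for $c_q(T)$ proved above gives $\De'(\vs_T)=\sum_{T=T_1\cdot T_2}q^{\us{inv}(T_1,T_2)}\vs_{\us{st}(T_1)}\ot\vs_{\us{st}(T_2)}$, so every interior factor is again of the form $\vs_U$. Thus it remains to prove the clean statement: for every $U\in\us{SYT}$ of size $n+m$ one has $\lan\vs_U,r_{n,m}\ran=0$.

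To evaluate the building block $\lan\vs_U,H_aH_b\ran$ I pass once more to the coproduct, $\lan\vs_U,H_aH_b\ran=\lan\De'(\vs_U),H_a\ot H_b\ran$ up to a fixed power of $q=-1$. From $F_\al=\sum_{\be\succeq\al}M_\be$ and $\lan M_\al,H_\be\ran=\de_{\al,\be}$ one checks that $\lan\vs_V,H_a\ran\ne0$ forces $V$ to be a single row; hence only splittings $U=U_1\cdot U_2$ with each $\us{st}(U_i)$ a single row survive, and a dot product of two rows has at most two rows. This is exactly the regime of the shuffle Lemma \ref{shu}: its part (2) shows that for $U\in\us{SYT}(\la_1,\la_2)$ there is a unique such splitting realizing a prescribed top-row length, so $\lan\vs_U,H_aH_b\ran$ reduces to a single surviving term whose sign is supplied by (\ref{sig}) as $(-1)^{(a-\la_2)\la_2}\us{sign}(U)$, times a global factor depending only on $a,b$.

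Finally I would substitute these closed-form signs into the two cases of (\ref{odd}). When $n+m$ is even the two admissible splittings contribute sign exponents differing by $(n-m)\la_2$, which is even, giving $\lan\vs_U,H_nH_m\ran=\lan\vs_U,H_mH_n\ran$ and hence vanishing of the relation. When $n+m$ is odd the four products $H_nH_m,H_mH_n,H_{n+1}H_{m-1},H_{m-1}H_{n+1}$ carry weights $1,(-1)^n,-(-1)^n,-1$, and the signs from (\ref{sig}) together with the shift of admissible top-row length when passing from $(n,m)$ to $(n+1,m-1)$ must cancel in this alternating combination. I expect the genuine difficulty to be exactly this last sign bookkeeping: reconciling the global $q=-1$ factor coming from the coproduct pairing with the $(-1)^n$-weights of (\ref{odd}), and treating the boundary shapes where some of the four products admit no valid splitting, so that a term is absent rather than cancelling. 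Lemma \ref{shu}(3) is the tool that makes all these signs explicit, and matching them term by term against (\ref{odd}) is the crux; the earlier reductions are formal consequences of the $q$-Hopf duality already established in Theorem \ref{mr} and Proposition \ref{qdual}.
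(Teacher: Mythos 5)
Your route is the paper's own: reduce $\vs_T\in\ms{OSym}$ to orthogonality against the generators (\ref{odd}) of $\us{Ker}\phi$ (your explicit justification via the iterated coproduct, using that the tensor factors of $\De'(\vs_T)$ are again of the form $\vs_U$, is the right way to make the paper's one-line reduction rigorous), then push the pairing through $\De'$ so that only single-row factors survive, and invoke Lemma \ref{shu}(2),(3) for the unique splitting and its sign. The one step you defer --- the odd-parity cancellation --- is exactly the paper's closing computation: for $T\in\us{SYT}(m,n-m)$ with $n=p+q$ odd the four contributions sum to $(-1)^{(p-n+m)(n-m)}\lb 1+(-1)^{p+n(n-m)}-(-1)^{p+n-m}-(-1)^{(n-1)(n-m)}\rb\us{sign}(T)$, which vanishes because $n(n-m)\equiv n-m$ and $(n-1)(n-m)\equiv 0\pmod 2$ when $n$ is odd; and the boundary worry you raise is harmless, since the splittings are admissible in pairs ($p$ admissible iff $q$ is, and $p+1$ iff $q-1$ is) and each surviving pair cancels on its own.
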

\begin{proof}
In order to prove that $\vs_T\in\ms{OSym}$, we only need to check that $\vs_T\in(\us{Ker}\phi)^\bot$ with respect to the canonical pairing $\lan\cdot,\cdot\ran$ (Prop. \ref{bp} (3)). Due to the relation
(\ref{odd}), it is equivalent to
\begin{equation}\label{eo}
\begin{array}{l}
\lan\vs_T,H_pH_q-H_pH_q\ran=0,\mbox{ if }p+q\mbox{ even},\\
\lan\vs_T,H_pH_q+(-1)^pH_qH_p-(-1)^pH_{p+1}H_{q-1}-H_{q-1}H_{p+1}\ran=0,\mbox{ if }p+q\mbox{ odd}.
\end{array}
\end{equation}
Now $\vs_T=\sum\limits_{P(w)=T}(-1)^{\ell(w)}F_{c(w)}$, and
\[\begin{split}
\De'\circ\pi'_q(c_q(T))&=(\pi'_q\ot\pi'_q)\circ\De'(c_q(T))=\sum\limits_{T_1,T_2\in\us{\ti{SSYT}}\atop T=T_1\cdot T_2}q^{\us{\s{inv}}(T_1,T_2)}\pi'_q(c_q(\us{st}(T_1)))\ot\pi'_q(c_q(\us{st}(T_2)))\\
&=\sum\limits_{T_1,T_2\in\us{\ti{SSYT}}\atop T=T_1\cdot T_2}q^{\us{\s{inv}}(T_1,T_2)}\lb\sum\limits_{P(w_1)=\us{\s{st}}(T_1)}q^{\ell(w_1)}F_{c(w_1)}\rb
\ot\lb\sum\limits_{P(w_2)=\us{\s{st}}(T_2)}q^{\ell(w_2)}F_{c(w_2)}\rb
\end{split}\]
Since $F_\al=\sum\limits_{\be\succeq\al}M_\be$, we have $\lan F_\al,H_n\ran=\de_{\al,(n)}$. But $c(\al)=(n)\Leftrightarrow\al=1_n\in\mathfrak{S}_n\Leftrightarrow P(\al)=\xy 0;/r.15pc/:
(0,2.5)*{};(20,2.5)*{}**\dir{-};
(0,-2.5)*{};(20,-2.5)*{}**\dir{-};
(0,2.5)*{};(0,-2.5)*{}**\dir{-};
(5,2.5)*{};(5,-2.5)*{}**\dir{-};
(15,2.5)*{};(15,-2.5)*{}**\dir{-};
(20,2.5)*{};(20,-2.5)*{}**\dir{-};
(2.5,0)*{\stt{1}};(10,0)*{\dots};
(17.5,0)*{n};
\endxy~$.
Hence,
\[\begin{split}
\lan\pi'_q(c_q(T))&,H_pH_q\ran=\lan\De'\circ\pi'_q(c_q(T)),H_p\ot H_q\ran\\
&=\sum\limits_{T_1,T_2\in\us{\ti{SSYT}}\atop T=T_1\cdot T_2}q^{\us{\s{inv}}(T_1,T_2)}\lan\sum\limits_{P(w_1)=\us{\s{st}}(T_1)}q^{\ell(w_1)}F_{c(w_1)},H_p
\ran\lan\sum\limits_{P(w_2)=\us{\s{st}}(T_2)}q^{\ell(w_2)}F_{c(w_2)},H_q\ran\\
&=\sum\limits_{\{i_1<\cdots<i_p\}\cup\{j_1<\cdots<j_q\}=[p+q]}
q^{\us{\s{inv}}\lb~\xy 0;/r.15pc/:
(0,2.5)*{};(20,2.5)*{}**\dir{-};
(0,-2.5)*{};(20,-2.5)*{}**\dir{-};
(0,2.5)*{};(0,-2.5)*{}**\dir{-};
(5,2.5)*{};(5,-2.5)*{}**\dir{-};
(15,2.5)*{};(15,-2.5)*{}**\dir{-};
(20,2.5)*{};(20,-2.5)*{}**\dir{-};
(2.5,0)*{\stt{i_1}};(10,0)*{\dots};
(17.5,0)*{\stt{i_p}};
\endxy~,~\xy 0;/r.15pc/:
(0,2.5)*{};(20,2.5)*{}**\dir{-};
(0,-2.5)*{};(20,-2.5)*{}**\dir{-};
(0,2.5)*{};(0,-2.5)*{}**\dir{-};
(5,2.5)*{};(5,-2.5)*{}**\dir{-};
(15,2.5)*{};(15,-2.5)*{}**\dir{-};
(20,2.5)*{};(20,-2.5)*{}**\dir{-};
(2.5,0)*{\stt{j_1}};(10,0)*{\dots};
(17.5,0)*{\stt{j_q}};
\endxy~\rb}\de_{T,~\xy 0;/r.15pc/:
(0,2.5)*{};(20,2.5)*{}**\dir{-};
(0,-2.5)*{};(20,-2.5)*{}**\dir{-};
(0,2.5)*{};(0,-2.5)*{}**\dir{-};
(5,2.5)*{};(5,-2.5)*{}**\dir{-};
(15,2.5)*{};(15,-2.5)*{}**\dir{-};
(20,2.5)*{};(20,-2.5)*{}**\dir{-};
(2.5,0)*{\stt{i_1}};(10,0)*{\dots};
(17.5,0)*{\stt{i_p}};
\endxy~\cdot~\xy 0;/r.15pc/:
(0,2.5)*{};(20,2.5)*{}**\dir{-};
(0,-2.5)*{};(20,-2.5)*{}**\dir{-};
(0,2.5)*{};(0,-2.5)*{}**\dir{-};
(5,2.5)*{};(5,-2.5)*{}**\dir{-};
(15,2.5)*{};(15,-2.5)*{}**\dir{-};
(20,2.5)*{};(20,-2.5)*{}**\dir{-};
(2.5,0)*{\stt{j_1}};(10,0)*{\dots};
(17.5,0)*{\stt{j_q}};
\endxy}~.\\
%&=\begin{cases}
%  q^{|\{(i_k,j_l):i_k>j_l,~k=1,\dots,n,~l=1,\dots,m\}|},&\mbox{if }T=~\xy 0;/r.15pc/:
%(0,2.5)*{};(20,2.5)*{}**\dir{-};
%(0,-2.5)*{};(20,-2.5)*{}**\dir{-};
%(0,2.5)*{};(0,-2.5)*{}**\dir{-};
%(5,2.5)*{};(5,-2.5)*{}**\dir{-};
%(15,2.5)*{};(15,-2.5)*{}**\dir{-};
%(20,2.5)*{};(20,-2.5)*{}**\dir{-};
%(2.5,0)*{\stt{i_1}};(10,0)*{\dots};
%(17.5,0)*{\stt{i_n}};
%\endxy\cdot\xy 0;/r.15pc/:
%(0,2.5)*{};(20,2.5)*{}**\dir{-};
%(0,-2.5)*{};(20,-2.5)*{}**\dir{-};
%(0,2.5)*{};(0,-2.5)*{}**\dir{-};
%(5,2.5)*{};(5,-2.5)*{}**\dir{-};
%(15,2.5)*{};(15,-2.5)*{}**\dir{-};
%(20,2.5)*{};(20,-2.5)*{}**\dir{-};
%(2.5,0)*{\stt{j_1}};(10,0)*{\dots};
%(17.5,0)*{\stt{j_m}};
%\endxy~,\\
%0,&\mbox{otherwise}.
%\end{cases}
\end{split}\]
Now when $p+q$ is even and $T=~\xy 0;/r.15pc/:
(0,2.5)*{};(20,2.5)*{}**\dir{-};
(0,-2.5)*{};(20,-2.5)*{}**\dir{-};
(0,2.5)*{};(0,-2.5)*{}**\dir{-};
(5,2.5)*{};(5,-2.5)*{}**\dir{-};
(15,2.5)*{};(15,-2.5)*{}**\dir{-};
(20,2.5)*{};(20,-2.5)*{}**\dir{-};
(2.5,0)*{\stt{i_1}};(10,0)*{\dots};
(17.5,0)*{\stt{i_p}};
\endxy\cdot\xy 0;/r.15pc/:
(0,2.5)*{};(20,2.5)*{}**\dir{-};
(0,-2.5)*{};(20,-2.5)*{}**\dir{-};
(0,2.5)*{};(0,-2.5)*{}**\dir{-};
(5,2.5)*{};(5,-2.5)*{}**\dir{-};
(15,2.5)*{};(15,-2.5)*{}**\dir{-};
(20,2.5)*{};(20,-2.5)*{}**\dir{-};
(2.5,0)*{\stt{j_1}};(10,0)*{\dots};
(17.5,0)*{\stt{j_q}};
\endxy~$, there also exists a unique split $(\{i'_1<\dots<i'_q\},\{j'_1<\dots<j'_p\})\in S^{p+q}_{q,p}$ such that
$T=~\xy 0;/r.15pc/:
(0,2.5)*{};(20,2.5)*{}**\dir{-};
(0,-2.5)*{};(20,-2.5)*{}**\dir{-};
(0,2.5)*{};(0,-2.5)*{}**\dir{-};
(5,2.5)*{};(5,-2.5)*{}**\dir{-};
(15,2.5)*{};(15,-2.5)*{}**\dir{-};
(20,2.5)*{};(20,-2.5)*{}**\dir{-};
(2.5,0)*{\stt{i'_1}};(10,0)*{\dots};
(17.5,0)*{\stt{i'_q}};
\endxy\cdot\xy 0;/r.15pc/:
(0,2.5)*{};(20,2.5)*{}**\dir{-};
(0,-2.5)*{};(20,-2.5)*{}**\dir{-};
(0,2.5)*{};(0,-2.5)*{}**\dir{-};
(5,2.5)*{};(5,-2.5)*{}**\dir{-};
(15,2.5)*{};(15,-2.5)*{}**\dir{-};
(20,2.5)*{};(20,-2.5)*{}**\dir{-};
(2.5,0)*{\stt{j'_1}};(10,0)*{\dots};
(17.5,0)*{\stt{j'_p}};
\endxy~$ and
\[(-1)^{\us{\s{inv}}\lb~\xy 0;/r.15pc/:
(0,2.5)*{};(20,2.5)*{}**\dir{-};
(0,-2.5)*{};(20,-2.5)*{}**\dir{-};
(0,2.5)*{};(0,-2.5)*{}**\dir{-};
(5,2.5)*{};(5,-2.5)*{}**\dir{-};
(15,2.5)*{};(15,-2.5)*{}**\dir{-};
(20,2.5)*{};(20,-2.5)*{}**\dir{-};
(2.5,0)*{\stt{i_1}};(10,0)*{\dots};
(17.5,0)*{\stt{i_p}};
\endxy~,~\xy 0;/r.15pc/:
(0,2.5)*{};(20,2.5)*{}**\dir{-};
(0,-2.5)*{};(20,-2.5)*{}**\dir{-};
(0,2.5)*{};(0,-2.5)*{}**\dir{-};
(5,2.5)*{};(5,-2.5)*{}**\dir{-};
(15,2.5)*{};(15,-2.5)*{}**\dir{-};
(20,2.5)*{};(20,-2.5)*{}**\dir{-};
(2.5,0)*{\stt{j_1}};(10,0)*{\dots};
(17.5,0)*{\stt{j_q}};
\endxy~\rb}=(-1)^{\us{\s{inv}}\lb~\xy 0;/r.15pc/:
(0,2.5)*{};(20,2.5)*{}**\dir{-};
(0,-2.5)*{};(20,-2.5)*{}**\dir{-};
(0,2.5)*{};(0,-2.5)*{}**\dir{-};
(5,2.5)*{};(5,-2.5)*{}**\dir{-};
(15,2.5)*{};(15,-2.5)*{}**\dir{-};
(20,2.5)*{};(20,-2.5)*{}**\dir{-};
(2.5,0)*{\stt{i'_1}};(10,0)*{\dots};
(17.5,0)*{\stt{i'_q}};
\endxy~,~\xy 0;/r.15pc/:
(0,2.5)*{};(20,2.5)*{}**\dir{-};
(0,-2.5)*{};(20,-2.5)*{}**\dir{-};
(0,2.5)*{};(0,-2.5)*{}**\dir{-};
(5,2.5)*{};(5,-2.5)*{}**\dir{-};
(15,2.5)*{};(15,-2.5)*{}**\dir{-};
(20,2.5)*{};(20,-2.5)*{}**\dir{-};
(2.5,0)*{\stt{j'_1}};(10,0)*{\dots};
(17.5,0)*{\stt{j'_p}};
\endxy~\rb}=\us{sign}(T)\]
by lemma \ref{shu} (2), (3). Hence, we prove the even case as the LHS of (\ref{eo}) vanishes in pairs for fixed $T$.

When $p+q$ is odd and  $T=~\xy 0;/r.15pc/:
(0,2.5)*{};(20,2.5)*{}**\dir{-};
(0,-2.5)*{};(20,-2.5)*{}**\dir{-};
(0,2.5)*{};(0,-2.5)*{}**\dir{-};
(5,2.5)*{};(5,-2.5)*{}**\dir{-};
(15,2.5)*{};(15,-2.5)*{}**\dir{-};
(20,2.5)*{};(20,-2.5)*{}**\dir{-};
(2.5,0)*{\stt{i_1}};(10,0)*{\dots};
(17.5,0)*{\stt{i_p}};
\endxy\cdot\xy 0;/r.15pc/:
(0,2.5)*{};(20,2.5)*{}**\dir{-};
(0,-2.5)*{};(20,-2.5)*{}**\dir{-};
(0,2.5)*{};(0,-2.5)*{}**\dir{-};
(5,2.5)*{};(5,-2.5)*{}**\dir{-};
(15,2.5)*{};(15,-2.5)*{}**\dir{-};
(20,2.5)*{};(20,-2.5)*{}**\dir{-};
(2.5,0)*{\stt{j_1}};(10,0)*{\dots};
(17.5,0)*{\stt{j_q}};
\endxy~\in\us{SYT}(m,n-m)$, by lemma \ref{shu} (2), (3) again, the LHS of (\ref{eo}) is equal to \[\begin{split}
&\lb(-1)^{(p-n+m)(n-m)}+(-1)^p(-1)^{(q-n+m)(n-m)}-(-1)^p(-1)^{(p+1-n+m)(n-m)}-(-1)^{(q-1-n+m)(n-m)}\rb \us{sign}(T)\\
&=(-1)^{(p-n+m)(n-m)}\lb1+(-1)^{p+n(n-m)}-(-1)^{p+n-m}-(-1)^{(n-1)(n-m)}\rb\us{sign}(T),
\end{split}
\]
which also vanishes as $n=p+q$ odd.
\end{proof}

Next we give the explicit formula for $\vs_T,~T\in\us{SYT}$.
In general $\ell(w)$ can't be read directly from the pair $(P(w),Q(w))$, but the sign of $w$ can. According to \cite[Theorem 4.3]{Rei}, we have the formula
\begin{equation}\label{sil}
(-1)^{\ell(w)}=(-1)^{{\la^T \choose 2}+\us{\s{inv}}(P(w))+\us{\s{inv}}(Q(w))}=(-1)^{\la^T \choose 2}\us{sign}(P(w))\us{sign}(Q(w)),
\end{equation}
where $\la=\us{sh}(P(w))$ and $\la^T$ is its transpose.
That gives us the chance to expand $\vs_T$ like the classical case via the bijection $f$ as follows. \[\begin{split}
\vs_T&=\sum\limits_{P(w)=T}(-1)^{\ell(w)}F_{c(w)}=
\sum\limits_{P(w)=T}(-1)^{{\la^T \choose 2}+\us{\s{inv}}(P(w))+\us{\s{inv}}(Q(w))}F_{c(w)}\\
&=(-1)^{{\la^T \choose 2}+\us{\s{inv}}(T)}
\sum\limits_{Q\in\us{\s{SYT}}(\la)}(-1)^{\us{\s{inv}}(Q)}F_{c(Q)}
=(-1)^{{\la^T \choose 2}+\us{\s{inv}}(T)}\sum\limits_{U\in\us{\s{SSYT}}(\la)}(-1)^{\us{\s{inv}}(U)}
x^{\us{\s{cont}}(U)},
\end{split}\]
where $T\in\us{SYT}(\la)$.

For any $U_1,U_2\in\us{SSYT}(\la)$, if $\us{st}(U_1)=\us{st}(U_2)$, then $\us{inv}(U_1)=\us{inv}(U_2)$ by definition. Hence, the above formula can be rewritten as an expansion of monomial quasisymmetric functions.
\begin{lemma}
 \begin{equation}\label{mon}
\vs_T=(-1)^{{\la^T \choose 2}+\us{\s{inv}}(T)}\sum\limits_{U\in\us{\ti{SSYT}}(\la)\atop \us{\ti{cont}}(U)\vDash|\la|}(-1)^{\us{\s{inv}}(U)}M_{\us{\s{cont}}(U)},~T\in\us{SYT}(\la).
 \end{equation}
\end{lemma}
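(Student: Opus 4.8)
The plan is to take the monomial expansion of $\vs_T$ already obtained,
\[
\vs_T=(-1)^{{\la^T \choose 2}+\us{inv}(T)}\sum\limits_{U\in\us{SSYT}(\la)}(-1)^{\us{inv}(U)}x^{\us{cont}(U)},
\]
and to collect the pure monomials $x^{\us{cont}(U)}$ into the basis elements $M_\al$ by grouping the tableaux according to the collapse of their contents. Since $M_\al=\sum_{\ga\in C_w,\,\al(\ga)=\al}x^\ga$, absorbing the monomials into $M$'s will be legitimate precisely when, for a fixed composition $\al\vDash|\la|$, the signed count $\sum_{\us{cont}(U)=\ga}(-1)^{\us{inv}(U)}$ is the same for every weak composition $\ga$ with collapse $\al(\ga)=\al$.

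The key step is a content-relabeling bijection establishing exactly this invariance. Fix $\ga$ with collapse $\al$, let its support (the set of indices of its nonzero parts) be $\{s_1<\cdots<s_r\}$ with $\ga_{s_k}=\al_k$, and relabel the entry values of any $U\in\us{SSYT}(\la)$ with $\us{cont}(U)=\ga$ by the strictly increasing map $s_k\mapsto k$. This sends $U$ bijectively to a tableau $V\in\us{SSYT}(\la)$ of content exactly $\al$, with obvious inverse $k\mapsto s_k$. Because the relabeling is strictly monotone on values, it preserves the standard order $<_T$ of boxes defined in (\ref{st}); hence $\us{st}(U)=\us{st}(V)$, and therefore $\us{inv}(U)=\us{inv}(V)$ by the observation recorded just before the statement that $\us{inv}$ depends only on the standardization. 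Consequently $\sum_{\us{cont}(U)=\ga}(-1)^{\us{inv}(U)}=\sum_{\us{cont}(V)=\al}(-1)^{\us{inv}(V)}$, a quantity depending only on $\al$.

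Finally I would reassemble the sum. For each $\al\vDash|\la|$ the monomials $x^\ga$ with $\al(\ga)=\al$ combine into $\left(\sum_{\us{cont}(V)=\al}(-1)^{\us{inv}(V)}\right)M_\al$; distributing this scalar back over the tableaux $V$ of content exactly $\al$ and letting $\al$ range over all compositions of $|\la|$ produces $\sum_{U\in\us{SSYT}(\la),\,\us{cont}(U)\vDash|\la|}(-1)^{\us{inv}(U)}M_{\us{cont}(U)}$, and multiplying by the sign prefactor yields (\ref{mon}). I do not expect a genuine obstacle here: the only point requiring care is the monotone-relabeling invariance of $\us{inv}$ above, which is essentially immediate from the definition of standardization. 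As a shortcut one may instead note that $\vs_T=\pi'_q(c_q(T))$ already lies in $\ms{QSym}_q$ and is thus $q$-quasisymmetric, so the coefficient of each $M_\al$ can simply be read off from the single monomial $x^\al$, bypassing the bijection altogether.
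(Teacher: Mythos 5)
Your proposal is correct and follows essentially the same route as the paper: both start from the monomial expansion $\vs_T=(-1)^{{\la^T\choose2}+\us{inv}(T)}\sum_U(-1)^{\us{inv}(U)}x^{\us{cont}(U)}$ and regroup it into $M_\al$'s using the key observation (recorded in the paper just before the lemma) that $\us{inv}(U)$ depends only on $\us{st}(U)$. Your explicit content-relabeling bijection is just a fleshed-out version of the paper's grouping by standardization, and the quasisymmetry shortcut you mention is also valid.
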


Now we point out that when taking $T=T_\la$, $s_\la:=\vs_{T_\la}$ is just the odd Schur function defined in \cite[\S3.3]{EK}. In fact, one can define the \textit{odd monomial symmetric function} $m_\la$ as the dual of $h_\la$ with respect to the bilinear form $(\cdot,\cdot)$. By Theorem \ref{osy}, $\vs_T$ lies in $\ms{OSym}$, we can expand it as a linear combination of $m_\la$'s. Using formula (\ref{mon}), we have
\[\begin{split}
\vs_T&=\sum\limits_{\mu\in\mathcal{P}}(\vs_T,h_\mu)m_\mu
=\sum\limits_{\mu\in\mathcal{P}}\lan(-1)^{{\la^T \choose 2}+\us{\s{inv}}(T)}\sum\limits_{U\in\us{\ti{SSYT}}(\la)\atop \us{\ti{cont}}(U)\vDash|\la|}(-1)^{\us{\s{inv}}(U)}M_{\us{\s{cont}}(U)},H_\mu\ran m_\mu\\
&=(-1)^{{\la^T \choose 2}+\us{\s{inv}}(T)}\sum\limits_{\mu\in\mathcal{P}}\lb\sum\limits_{U\in\us{\ti{SSYT}}(\la)\atop \us{\ti{cont}}(U)=\mu}(-1)^{\us{\s{inv}}(U)}\rb m_\mu,
\end{split}
\]
where $T\in\us{SYT}(\la)$. In particular, as $\us{inv}(T_\la)=0$, we regain the formula in \cite[Cor. 3.8]{EK}
\begin{equation}\label{schur}
s_\la=(-1)^{\la^T \choose 2}\sum\limits_{\mu\in\mathcal{P}}OK_{\la\mu} m_\mu,
\end{equation}
where $OK_{\la\mu}:=\sum\limits_{U\in\us{\ti{SSYT}}(\la)\atop \us{\ti{cont}}(U)=\mu}(-1)^{\us{\s{inv}}(U)}=\us{sign}(T_\la)\sum\limits_{U\in\us{\ti{SSYT}}(\la)\atop \us{\ti{cont}}(U)=\mu}\us{sign}(U)$, called the \textit{odd Kostka number}. In comparison, the expansion formula (\ref{mon}) is more available than (\ref{schur}) when computing the odd Schur functions. For instance, let $\la=211$, then ${\la^T \choose 2}=3$ and we have the following tableaux in $\us{SSYT}(\la)$ whose contents are compositions,
\[\begin{array}{ccccccc}
~U~&~\raisebox{2em}{\xy 0;/r.2pc/:
(0,0)*{};(10,0)*{}**\dir{-};
(0,-5)*{};(10,-5)*{}**\dir{-};
(0,-10)*{};(5,-10)*{}**\dir{-};
(0,-15)*{};(5,-15)*{}**\dir{-};
(0,0)*{};(0,-15)*{}**\dir{-};
(5,0)*{};(5,-15)*{}**\dir{-};
(10,0)*{};(10,-5)*{}**\dir{-};
(2.5,-2.5)*{1};(7.5,-2.5)*{1};
(2.5,-7.5)*{2};(2.5,-12.5)*{3};
\endxy}~&
~\raisebox{2em}{\xy 0;/r.2pc/:
(0,0)*{};(10,0)*{}**\dir{-};
(0,-5)*{};(10,-5)*{}**\dir{-};
(0,-10)*{};(5,-10)*{}**\dir{-};
(0,-15)*{};(5,-15)*{}**\dir{-};
(0,0)*{};(0,-15)*{}**\dir{-};
(5,0)*{};(5,-15)*{}**\dir{-};
(10,0)*{};(10,-5)*{}**\dir{-};
(2.5,-2.5)*{1};(7.5,-2.5)*{2};
(2.5,-7.5)*{2};(2.5,-12.5)*{3};
\endxy}~&
~\raisebox{2em}{\xy 0;/r.2pc/:
(0,0)*{};(10,0)*{}**\dir{-};
(0,-5)*{};(10,-5)*{}**\dir{-};
(0,-10)*{};(5,-10)*{}**\dir{-};
(0,-15)*{};(5,-15)*{}**\dir{-};
(0,0)*{};(0,-15)*{}**\dir{-};
(5,0)*{};(5,-15)*{}**\dir{-};
(10,0)*{};(10,-5)*{}**\dir{-};
(2.5,-2.5)*{1};(7.5,-2.5)*{3};
(2.5,-7.5)*{2};(2.5,-12.5)*{3};
\endxy}~&
~\raisebox{2em}{\xy 0;/r.2pc/:
(0,0)*{};(10,0)*{}**\dir{-};
(0,-5)*{};(10,-5)*{}**\dir{-};
(0,-10)*{};(5,-10)*{}**\dir{-};
(0,-15)*{};(5,-15)*{}**\dir{-};
(0,0)*{};(0,-15)*{}**\dir{-};
(5,0)*{};(5,-15)*{}**\dir{-};
(10,0)*{};(10,-5)*{}**\dir{-};
(2.5,-2.5)*{1};(7.5,-2.5)*{2};
(2.5,-7.5)*{3};(2.5,-12.5)*{4};
\endxy}~&
~\raisebox{2em}{\xy 0;/r.2pc/:
(0,0)*{};(10,0)*{}**\dir{-};
(0,-5)*{};(10,-5)*{}**\dir{-};
(0,-10)*{};(5,-10)*{}**\dir{-};
(0,-15)*{};(5,-15)*{}**\dir{-};
(0,0)*{};(0,-15)*{}**\dir{-};
(5,0)*{};(5,-15)*{}**\dir{-};
(10,0)*{};(10,-5)*{}**\dir{-};
(2.5,-2.5)*{1};(7.5,-2.5)*{3};
(2.5,-7.5)*{2};(2.5,-12.5)*{4};
\endxy}~&
~\raisebox{2em}{\xy 0;/r.2pc/:
(0,0)*{};(10,0)*{}**\dir{-};
(0,-5)*{};(10,-5)*{}**\dir{-};
(0,-10)*{};(5,-10)*{}**\dir{-};
(0,-15)*{};(5,-15)*{}**\dir{-};
(0,0)*{};(0,-15)*{}**\dir{-};
(5,0)*{};(5,-15)*{}**\dir{-};
(10,0)*{};(10,-5)*{}**\dir{-};
(2.5,-2.5)*{1};(7.5,-2.5)*{4};
(2.5,-7.5)*{2};(2.5,-12.5)*{3};
\endxy}~\\
~\us{inv}(U)~&~0~&~1~&~2~&~0~&~1~&~2~\\
~\us{cont}(U)~&~211~&~121~&~112~&~1111~&~1111~&~1111~
\end{array}
\]
Hence, $s_{211}=-(M_{211}-M_{121}+M_{112}+M_{1111})$.

\subsection{Odd Littlewood-Richardson rule}
It's well known that the algebra structure of $\ms{PR}'$ implies the Littlewood-Richardson rule for Schur functions. Now we only need to apply $\pi'_q$ to the multiplication rule (\ref{pr'm}) of $\ms{PR}'_q$ for $q=-1$, which gives the \textit{odd Littlewood-Richardson rule} as follows,
\[s_\la s_\mu=\sum\limits_{\nu}oc_{\la\mu}^\nu s_\nu,\]
where $oc_{\la\mu}^\nu:=\sum\limits_{S\in\us{\ti{SYT}}(\nu/\la)\atop \us{\ti{rect}}(S)=T_\mu}(-1)^{\us{\s{inv}}((T_\la)_S)}$ if $\la\subset
\nu$ and 0 otherwise, called the \textit{odd Littlewood-Richardson number}. Note that we haven't used the Littlewood-Richardson skew tableaux to characterize $oc_{\la\mu}^\nu$ as in \cite{Ell}.

In particular, the formula (\ref{mon}) implies that $s_{(n)}=\sum\limits_{\al\vDash n}M_\al=h_n,~s_{(1^n)}=(-1)^{n\choose2}M_{(1^n)}=(-1)^{n\choose2}e_n$, as $(e_n,h_\la)=\de_{\la,(1^n)}$. Now we can write down the \textit{odd Pieri rule} as follows.
\begin{theorem}\label{pieri}
For any $\la\in\mathcal {P}$ and $n\in\mathbb{Z}^+$, we have

(1)\[s_\la s_{(n)}=\sum\limits_{\mu}oc_{\la(n)}^\mu s_\mu,\]
where the sum is over those $\mu$ such that $\mu/\la$ is a horizontal strip of size $n$ (no two boxes in the same column). Besides, $oc_{\la(n)}^\mu=(-1)^{\us{\s{inv}}((T_\la)_S)}$, where $S$ is a horizontal strip of shape $\mu/\la$ with boxes labeled $1,\dots,n$ successively from left to right.

(2)\[s_\la s_{(1^n)}=\sum\limits_{\mu}oc_{\la(1^n)}^\mu s_\mu,\]
where the sum is over those $\mu$ such that $\mu/\la$ is a vertical strip of size $n$ (no two boxes in the same row). Besides, $oc_{\la(1^n)}^\mu=(-1)^{\us{\s{inv}}((T_\la)_S)}$, where $S$ is a vertical strip of shape $\mu/\la$ with boxes labeled $1,\dots,n$ successively from top to bottom.
\end{theorem}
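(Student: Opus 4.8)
The plan is to read off both formulas directly from the odd Littlewood-Richardson rule established just above, namely
\[
oc^\mu_{\la(n)}=\sum\limits_{S\in\us{SYT}(\mu/\la)\atop\us{rect}(S)=T_{(n)}}(-1)^{\us{inv}((T_\la)_S)},\qquad
oc^\mu_{\la(1^n)}=\sum\limits_{S\in\us{SYT}(\mu/\la)\atop\us{rect}(S)=T_{(1^n)}}(-1)^{\us{inv}((T_\la)_S)},
\]
and to argue that in each case the indexing set is empty unless $\mu/\la$ is a horizontal (resp. vertical) strip, in which event it is a singleton. The sign then comes out for free, so the only genuine work is to pin down that unique skew tableau $S$ and to compute its reading word.

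First I would treat part (1). Here $T_{(n)}$ is the one-row standard tableau $1\,2\cdots n$, and since $\us{rect}(S)=P(w(S))$, the number of contributing $S$ is precisely the classical Littlewood-Richardson coefficient $c^\mu_{\la,(n)}$, which by the classical Pieri rule equals $1$ when $\mu/\la$ is a horizontal strip of size $n$ and $0$ otherwise. Thus the sum defining $oc^\mu_{\la(n)}$ collapses to at most one term. To exhibit the surviving $S$, I would invoke the elementary geometry of a horizontal strip: the refinement inequalities $\la_i\ge\mu_{i+1}$ force the strip boxes of row $i+1$ to lie strictly to the left of those of row $i$, so labelling the strip $1,\dots,n$ from left to right is automatically a standard skew filling and, reading ``from left to right and bottom to top,'' produces the word $w(S)=1\,2\cdots n$. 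Hence $\us{rect}(S)=P(w(S))=T_{(n)}$, this $S$ is the unique contributor, and $oc^\mu_{\la(n)}=(-1)^{\us{inv}((T_\la)_S)}$, as claimed.

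Part (2) is entirely parallel after exchanging the roles of rows and columns. Now $T_{(1^n)}$ is the one-column tableau, whose reading word is the decreasing word $n(n-1)\cdots 1$, and $c^\mu_{\la,(1^n)}$ equals $1$ exactly when $\mu/\la$ is a vertical strip of size $n$. In a vertical strip every row carries at most one box, so labelling the strip $1,\dots,n$ from top to bottom is standard, and reading bottom to top yields $w(S)=n(n-1)\cdots 1$, which row-inserts to the single column $T_{(1^n)}$. Again $S$ is the unique contributor, and $oc^\mu_{\la(1^n)}=(-1)^{\us{inv}((T_\la)_S)}$.

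The main obstacle is therefore not the sign bookkeeping but the combinatorial identification of the unique rectifying tableau $S$: one must be certain that the prescribed left-to-right (resp. top-to-bottom) labelling is genuinely the skew standard tableau counted by the classical Pieri coefficient, and that its reading word rectifies to the row (resp. column). Once the two reading words $1\,2\cdots n$ and $n(n-1)\cdots1$ are verified, uniqueness is guaranteed by the classical fact that these Pieri coefficients are $0$ or $1$, and the odd refinement simply attaches the single sign $(-1)^{\us{inv}((T_\la)_S)}$ to the surviving term.
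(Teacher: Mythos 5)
Your proposal is correct and follows the same route as the paper: the paper states Theorem \ref{pieri} as an immediate specialization of the odd Littlewood--Richardson rule $s_\la s_\mu=\sum_\nu oc_{\la\mu}^\nu s_\nu$ to $\mu=(n)$ and $\mu=(1^n)$, with the defining sum for $oc_{\la(n)}^\mu$ (resp.\ $oc_{\la(1^n)}^\mu$) collapsing to the single left-to-right (resp.\ top-to-bottom) labelled strip exactly as you argue via the reading words $12\cdots n$ and $n(n-1)\cdots 1$. Your identification of the unique contributing $S$ and the resulting sign matches the paper's statement.
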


\begin{remark}\label{rem}
We translate our formulas into those in \cite[\S2.2.3]{EKL}. For a partition $\la$, let $\tfrac{i}{\la}$ be the Young diagram obtained by removing rows 1 through $i$ from the one corresponding to $\la$. Then
for the vertical strip case, ${\us{inv}((T_\la)_S)}=\left|\tfrac{i_1}{\la}\right|+\cdots+\left|\tfrac{i_n}{\la}\right|$, where $i_1,\dots,i_n$ are the rows of $\la$ to which a box is added. While for the horizontal strip case, ${\us{inv}((T_\la)_S)}=NE(\mu)-NE(\la)-\left|\tfrac{i_1}{\la^T}\right|-\cdots-\left|\tfrac{i_n}{\la^T}\right|$, where $NE(\la)$ is the total number of \textit{strictly southwest-northeast} (SW-NE) pairs of boxes in $\la$ and $i_1,\dots,i_n$ are the columns of $\la$ to which a box is added. Both cases accumulate the number of boxes strictly southwest to the new box added to the intermediate diagram step by step with increasing labels.
\end{remark}

In particular, we get the formula expanding $h_\la$ (resp. $e_\la$) as odd Schur functions by Theorem \ref{pieri} (1) (resp. (2)) as follows,
\begin{proposition}\label{hep}
\[\begin{array}{l}
h_\la=\sum\limits_\mu\lb\sum\limits_{(\la_1)=\nu_1\subset\nu_2\subset\cdots\subset\nu_r=\mu\atop \nu_i/\nu_{i-1}\mbox{\ti{ horizotal strip with $\la_i$ boxs}}}\prod\limits_{i=2}^rc_{\nu_{i-1}(\la_i)}^{\nu_i}\rb s_\mu
=\sum\limits_\mu OK_{\mu\la}s_\mu,\\
(-1)^{\la\choose2}e_\la=\sum\limits_\mu\lb\sum\limits_{(1^{\la_1})=\nu_1\subset\nu_2\subset\cdots\subset\nu_r=\mu\atop \nu_i/\nu_{i-1}\mbox{\ti{ vertical strip with $\la_i$ boxs}}}\prod\limits_{i=2}^rc_{\nu_{i-1}(1^{\la_i})}^{\nu_i}\rb s_\mu
=\sum\limits_\mu (-1)^{NE(\mu)}OK_{\mu^T\la}s_\mu,
\end{array}\]
where $\la=(\la_1,\dots,\la_r)\in\mathcal {P}$.
\end{proposition}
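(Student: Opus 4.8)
The plan is to obtain the first equality in each line by iterating the odd Pieri rule (Theorem~\ref{pieri}), and then to identify the resulting sum over chains of partitions with an odd Kostka number via the standard bijection between such chains and semistandard tableaux, keeping careful track of signs throughout.

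First I would use $s_{(n)}=h_n$ to write $h_\la=h_{\la_1}\cdots h_{\la_r}=s_{(\la_1)}\cdots s_{(\la_r)}$. Multiplying successively by $s_{(\la_i)}$ and applying Theorem~\ref{pieri}(1), each step appends a horizontal strip of $\la_i$ boxes and contributes the factor $oc_{\nu_{i-1}(\la_i)}^{\nu_i}=(-1)^{\us{inv}((T_{\nu_{i-1}})_{S_i})}$, where $S_i$ is the strip $\nu_i/\nu_{i-1}$ labelled $1,\dots,\la_i$ from left to right; iterating over $i=2,\dots,r$ produces exactly the displayed sum over chains $(\la_1)=\nu_1\subset\cdots\subset\nu_r=\mu$. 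The $e$-line is identical: since $s_{(1^n)}=(-1)^{\binom{n}{2}}e_n$, one has $(-1)^{\binom{\la}{2}}e_\la=s_{(1^{\la_1})}\cdots s_{(1^{\la_r})}$ (reading $\binom{\la}{2}=\sum_i\binom{\la_i}{2}$), and iterating Theorem~\ref{pieri}(2) gives the sum over chains of vertical strips.

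For the second equality in the $h$-line I would invoke the classical bijection sending a chain to the tableau $U\in\us{SSYT}(\mu)$ that records the value $i$ in the boxes of $\nu_i/\nu_{i-1}$; this $U$ has content $\la$ and the map is a bijection onto $\{U\in\us{SSYT}(\mu):\us{cont}(U)=\la\}$. The crux is then the exact identity
\[
\us{inv}(U)=\sum_{i=2}^r\us{inv}\big((T_{\nu_{i-1}})_{S_i}\big).
\]
I would prove it by attributing each inversion of $U$ to the step at which its box of larger value is inserted, and checking step by step that the inversions produced by inserting the value-$i$ strip are exactly those of the standard tableau $(T_{\nu_{i-1}})_{S_i}$: there are none inside $\nu_{i-1}$ (as $\us{inv}(T_{\nu_{i-1}})=0$), every pair with the upper box in the strip and the lower box in $\nu_{i-1}$ inverts, and the within-strip inversions agree because the strip is labelled left to right. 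This is precisely the accumulation described in Remark~\ref{rem}, so the chain-sum equals $\sum_U(-1)^{\us{inv}(U)}=OK_{\mu\la}$.

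The main obstacle is the sign accounting for the $e$-line, where the correction $(-1)^{NE(\mu)}$ enters. Here a chain of vertical strips filling $\mu$ is the same datum as a filling $V$ of $\mu$ with strictly increasing rows and weakly increasing columns, and $U':=V^T$ is a genuine element of $\us{SSYT}(\mu^T)$ of content $\la$. The analogue of the $h$-case computation gives $\sum_i\us{inv}((T_{\nu_{i-1}})_{S_i})=|\{(P,Q):Q\text{ strictly below }P,\ \us{val}(Q)<\us{val}(P)\}|$, while transposing gives $\us{inv}(U')=|\{(P,P'):\us{col}(P)<\us{col}(P'),\ \us{val}(P)\ge\us{val}(P')\}|$. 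I would then classify the unordered pairs of boxes of $\mu$ by relative position. Pairs in the same row or column contribute to neither count. For a northwest--southeast pair the two contributions coincide, using that within a vertical strip the lower box is weakly to the left, so no such pair can have equal values. For a southwest--northeast pair the two contributions are complementary, hence differ, and there are exactly $NE(\mu)$ such pairs. This yields $\sum_i\us{inv}((T_{\nu_{i-1}})_{S_i})\equiv NE(\mu)+\us{inv}(U')\pmod 2$, and summing over chains gives $(-1)^{NE(\mu)}OK_{\mu^T\la}$. I expect this parity argument via the pair classification to be the delicate step; everything else is a routine iteration of the Pieri rules and the tableau bijection.
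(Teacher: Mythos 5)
Your proof is correct and follows the same route the paper intends: the proposition is stated there as an immediate consequence of iterating Theorem \ref{pieri}, with the identification of the accumulated signs with $OK_{\mu\la}$ (resp. $(-1)^{NE(\mu)}OK_{\mu^T\la}$) left implicit via the ``boxes strictly southwest'' description in Remark \ref{rem}. Your chain-to-tableau bijection, the identity $\us{inv}(U)=\sum_i\us{inv}((T_{\nu_{i-1}})_{S_i})$, and the parity bookkeeping for the $NE(\mu)$ correction supply exactly the details the paper omits.
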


\begin{remark}
Note that the formula of $h_\la$ in Prop. \ref{hep} is used as the defining relation of $s_\la$ in \cite{EK}, while the formula of $e_\la$ is also given in \cite[Cor.3.12]{EK}.
\end{remark}

Now we have the orthogonality of $s_\la$'s,
 \begin{equation}\label{ort}
 (s_\la,s_\mu)=(-1)^{\la^T \choose 2}\de_{\la,\mu},
 \end{equation}
 for $\sum\limits_{\la\vdash n}(-1)^{\la^T \choose 2}s_\la\ot s_\la=\sum\limits_{\la\vdash n}(-1)^{\la^T \choose 2}\lb\sum\limits_\mu OK_{\la\mu}m_\mu\rb\ot s_\la=\sum\limits_{\mu\vdash n}m_\mu\ot(-1)^{\la^T \choose 2}\sum\limits_\la OK_{\la\mu}s_\la=\sum\limits_{\mu\vdash n}m_\mu\ot h_\mu$. From \cite[Lemma 3.11]{EK}, we have the formula
\[\psi_3(s_\la)=\us{sign}(T_\la)(-1)^{\la^T\choose2}s_\la.\]
Applying $\psi_3$ to Theorem \ref{pieri}, we get the left-sided version
\begin{equation}\label{rp1}
s_{(n)}s_\la=\sum\limits_{\mu}(-1)^{{\la^T\choose2}+{\mu^T\choose2}}
\us{sign}(T_\la)\us{sign}(T_\mu)oc_{\la(n)}^\mu s_\mu,
\end{equation}
\begin{equation}\label{rp2}
s_{(1^n)}s_\la=\sum\limits_{\mu}(-1)^{{\la^T\choose2}+{\mu^T\choose2}}
\us{sign}(T_\la)\us{sign}(T_\mu)oc_{\la(1^n)}^\mu s_\mu.
\end{equation}

\section{Odd quasisymmetric Schur function}
Recently, a new basis of $\ms{QSym}$, called the quasisymmetric Schur functions, has been discovered as a refinement of the Schur functions in $\ms{QSym}$ \cite{HLMW1},\cite{BLW}. It arises from a specialization of nonsymmetric Macdonald polynomials into standard bases. The quasisymmetric Schur functions indexed by $C$ have many similar properties as the Schur functions and denoted by $\{\mathcal {S}_\al\}_{\al\in C}$.

In this section, we show that such basis also admits an odd version as a refinement of the odd Schur functions. As we're dealing with Young tableaux, we choose a different version, called \textit{Young quasisymmetric Schur functions} \cite[Chapter 5]{LMW}, to work with and denote this basis by $\{\mathscr{S}_\al\}_{\al\in C}$. Under the transpose $\psi:=\psi_3^*$ of $\psi_3$ in $\ms{QSym}$ with $\psi(F_\al)=F_{\ol{\al}}$, one have $\mathscr{S}_\al=\psi(\mathcal {S}_{\ol{\al}})$.

\subsection{Combinatorics of Young composition tableaux}
First we introduce the notion of \textit{Young composition tableaux} defined as follows \cite[Chapter 4]{LMW}.
Given $\al=(\al_1,\dots,\al_r)\in C$, one have the \textit{Young composition diagram} of $\al$ drawing like Young diagrams with partitions replacing by compositions. For example,
\[\xy 0;/r.18pc/:
(0,0)*{};(10,0)*{}**\dir{-};
(0,-5)*{};(15,-5)*{}**\dir{-};
(0,-10)*{};(15,-10)*{}**\dir{-};
(0,-15)*{};(5,-15)*{}**\dir{-};
(0,0)*{};(0,-15)*{}**\dir{-};
(5,0)*{};(5,-15)*{}**\dir{-};
(10,0)*{};(10,-10)*{}**\dir{-};
(15,-5)*{};(15,-10)*{}**\dir{-};
(7.5,-20)*{\al=(2,3,1)};
\endxy\]
Here we still follow English convention different from that in \cite{LMW}.

\begin{definition}
The \textit{Young composition poset} $\mathcal {L}$ is the poset structure on $C$ in which $\al=(\al_1,\dots,\al_r)$ is covered by

\noindent 1. $(\al_1,\dots,\al_r,1)$, that is, the composition obtained by suffixing a part of size 1 to $\al$.

\noindent 2. $(\al_1,\dots,\al_k+1,\dots,\al_r)$, provided that $\al_i\neq\al_k$ for all $i>k$, that is, the composition
obtained by adding 1 to a part of $\al$ as long as that part is the rightmost part
of that size.
\end{definition}
We denote the corresponding partial order by $\al\lessdot\be$, and note that the Young's lattice is just a subposet of $\mathcal {L}$. Now let $\al,\be$ be two Young composition diagrams such that $\al\lessdot\be$. Then we define
the \textit{skew Young composition shape} $\al\pa\be$ to be the array of boxes
\[\al\pa\be=\{(i,j):(i,j)\in\al\mbox{ and }(i,j)\notin\be\}.\]
For example,
\[\xy 0;/r.18pc/:
(0,0)*{};(10,0)*{}**\dir{-};
(0,-5)*{};(15,-5)*{}**\dir{-};
(0,-10)*{};(15,-10)*{}**\dir{-};
(0,-15)*{};(5,-15)*{}**\dir{-};
(0,0)*{};(0,-15)*{}**\dir{-};
(5,0)*{};(5,-15)*{}**\dir{-};
(10,0)*{};(10,-10)*{}**\dir{-};
(15,-5)*{};(15,-10)*{}**\dir{-};
(7.5,-20)*{\al\pa\be=(2,3,1)\pa(2,1)};
(2.5,-2.5)*{\bullet};(7.5,-2.5)*{\bullet};(2.5,-7.5)*{\bullet};
\endxy\]

\begin{definition}
Given a skew Young composition shape $\al\pa\be$, we define a \textit{semistandard
Young composition tableau} (abbreviated to SSYCT) $\tau$ of shape $\us{sh}(\tau)=
\al\pa\be$ to be a filling $\tau:\al\pa\be\rightarrow \mathbb{Z}^+$
of the boxes of $\al\pa\be$ such that

\noindent 1. the entries in each row are weakly increasing when read from left to right

\noindent 2. the entries in the first column are strictly increasing when read from the row with the smallest index to the largest index

\noindent 3. (\textit{triple rule}) if $i>j$ and $(j,k+1)\in\al\pa\be$ and either $(i,k)\in\be$ or $\tau(i,k)\leq\tau(j,k+1)$, then
either $(i,k+1)\in\be$ or both $(i,k+1)\in\al\pa\be$ and $\tau(i,k+1)<\tau(j,k+1)$.

Moreover, $\tau$ is a \textit{standard Young composition tableau} (abbreviated to SYCT) if it is a bijection $\tau:\al\pa\be\rightarrow[|\al\pa\be|]$.
We abuse the notation to denote the set of semistandard (resp. standard)
Young composition tableaux also as SSYCT (resp. SYCT).
\end{definition}

The naturality of  the above conditions 1 to 3 can be seen by the following result
\begin{proposition}\label{ycp}
There exists a one-to-one correspondence between saturated chains in $\mathcal {L}$ and SYCT:
\[\al^0\lessdot\al^1\lessdot\cdots\lessdot\al^n\leftrightarrow \tau,\]
where $\tau$ is the SYCT of shape $\al^n\pa\al^0$ such that the number $i$ appears in the unique box of $\al^i\pa\al^{i-1}$.
\end{proposition}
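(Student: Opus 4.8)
The plan is to realize the correspondence by an explicit pair of mutually inverse maps and to reduce the statement to two well-definedness checks. Given a saturated chain $\al^0\lessdot\al^1\lessdot\cdots\lessdot\al^n$ in $\mathcal{L}$, each cover $\al^{i-1}\lessdot\al^i$ adjoins exactly one box, so $\al^i\pa\al^{i-1}$ is a single cell; I define the forward map $\Phi$ to send the chain to the filling $\tau$ of $\al^n\pa\al^0$ that writes $i$ in that cell. In the other direction I define $\Psi$ on an SYCT of shape $\al^n\pa\al^0$ by repeatedly deleting the cell carrying the largest remaining entry and recording the resulting compositions. Since $\Phi$ and $\Psi$ invert each other on the level of labelings by construction, the entire content of the proposition is that $\Phi$ always lands in SYCT and that each deletion performed by $\Psi$ returns a composition covered, in $\mathcal{L}$, by the previous one.

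For the forward map I would check the three defining conditions of an SYCT against the two admissible cover moves. A type-2 cover adjoins a box at the right end of an existing row and a type-1 cover adjoins a new bottom row of length one; consequently, within any fixed row the cells receive their labels from left to right, which gives condition 1 (strict increase, as the labels are distinct). A first-column cell of the skew shape can only be created by a type-1 move, and such a move appends its row below all rows already present; reading the appended rows in chronological order therefore makes the first column strictly increase downward, which is condition 2. The remaining condition 3, the triple rule, is the step I expect to be the main obstacle.

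To verify the triple rule I would follow the cell $b=(j,k+1)$ bearing label $m$ and examine the composition $\al^{m-1}$ present just before $b$ is adjoined. Because $b$ arises from a type-2 move extending the part in row $j$, that part has length $k$ in $\al^{m-1}$ and, by the legality of the move, row $j$ is the \emph{bottommost} row of length $k$ in $\al^{m-1}$. I claim this ``rightmost part of its size'' condition is exactly the assertion of the triple rule for rows $j<i$ at columns $k,k+1$: if a lower row $i$ met the hypothesis $(i,k)\in\al^0$ or $\tau(i,k)\le m$ but failed the conclusion, then in $\al^{m-1}$ row $i$ would also have length precisely $k$ and lie below row $j$, contradicting bottommost-ness. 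Matching the two formulations term by term, with the skew conventions $(i,k)\in\al^0$ and $(i,k+1)\in\al^0$ accounting for the deleted cells, is the technical heart of the argument and the place most exposed to bookkeeping slips.

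Finally, for $\Psi$ I would argue dually. In any SYCT the maximal entry sits at the right end of its row. If that row has length at least two, then the triple rule applied to this cell rules out any row below it of the same length once the cell is deleted, so the removal reverses a type-2 cover; if instead the cell is alone in its row, condition 2 and maximality force it to be the unique box of the last row, so the removal reverses a type-1 cover. In either case one obtains an SYCT of shape $\al^{n-1}\pa\al^0$ with $\al^{n-1}\lessdot\al^n$, and iterating down to the empty skew shape $\al^0\pa\al^0$ yields a saturated chain. Organizing the forward check, the deletion step, and the identities $\Psi\circ\Phi=\mi$ and $\Phi\circ\Psi=\mi$ into one induction on $n$ then completes the proof of the bijection.
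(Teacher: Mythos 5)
Your argument is correct, and it is worth noting that the paper itself offers no proof of this proposition at all: it is stated as motivation for the three SSYCT axioms and implicitly quoted from the literature on quasisymmetric Schur functions (\cite[Chapter 4]{LMW}, \cite{Mas}). So you are supplying a proof the paper omits, and the route you chose is the natural one. The step you flag as "most exposed to bookkeeping slips" does go through: for a cell $(j,k+1)$ adjoined at time $m$, the hypothesis of the triple rule for a row $i>j$ (namely $(i,k)\in\al^0$ or $\tau(i,k)\le m$) forces $\al^{m-1}_i\ge k$, while the failure of its conclusion (so $(i,k+1)\notin\al^0$ and either $(i,k+1)\notin\al^n$ or $\tau(i,k+1)>m$) forces $\al^{m-1}_i\le k$; together these contradict the legality condition $\al^{m-1}_i\ne\al^{m-1}_j=k$ for $i>j$, and the converse implication used in the deletion step is the same computation run backwards with $m=n$ maximal, where the hypothesis $\tau(i,k)\le n$ is automatic. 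Two small points deserve an explicit sentence in a written version. First, in the type-1 deletion case you should note that $(j,1)\notin\al^0$ forces $j>\ell(\al^0)$ (a composition has no interior zero parts), so every lower row also has its first-column box in the skew shape and condition 2 plus maximality of $n$ really does pin $j$ to the last row. Second, you assert but do not argue that the restriction of $\tau$ to $\al^{n-1}\pa\al^0$ is again an SYCT; conditions 1 and 2 are trivially inherited, and the triple rule is inherited because the deleted cell carries the maximal label $n$, so it can never be the cell $(i,k'+1)$ demanded by the conclusion of the triple rule for a surviving cell $(j',k'+1)$ (that conclusion requires $\tau(i,k'+1)<\tau(j',k'+1)<n$), nor can it satisfy the hypothesis as the cell $(i,k')$. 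With those two remarks added, your induction closes and the bijection is established.
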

The \textit{column sequence} $\us{col}(\al^0\lessdot\al^1\lessdot\cdots\lessdot\al^n)$ of a saturated chain $\al^0\lessdot\al^1\lessdot\cdots\lessdot\al^n$ is given by $c_1,\dots,c_n$, where the box of $\al^i\pa\al^{i-1},~i=1,\dots,n$, is in the $c_i$th column of $\al^n$. For example, the saturated chain in $\mathcal {L}$ \[(2,1)\lessdot(2,1,1)\lessdot(3,1,1)
\lessdot(3,1,2)\lessdot(3,2,2)\lessdot(3,2,3)\]
corresponds to the following SYCT
\[\xy 0;/r.18pc/:
(0,0)*{};(15,0)*{}**\dir{-};
(0,-5)*{};(15,-5)*{}**\dir{-};
(0,-10)*{};(15,-10)*{}**\dir{-};
(0,-15)*{};(15,-15)*{}**\dir{-};
(0,0)*{};(0,-15)*{}**\dir{-};
(5,0)*{};(5,-15)*{}**\dir{-};
(10,0)*{};(10,-15)*{}**\dir{-};
(15,0)*{};(15,-5)*{}**\dir{-};
(15,-10)*{};(15,-15)*{}**\dir{-};
(2.5,-2.5)*{\bullet};(7.5,-2.5)*{\bullet};(2.5,-7.5)*{\bullet};
(2.5,-12.5)*{1};(12.5,-2.5)*{2};(7.5,-7.5)*{4};(7.5,-12.5)*{3};
(12.5,-12.5)*{5};
\endxy\]
and the column sequence is $1,3,2,2,3$.

Similar to the case of Young tableaux, one can define the \textit{content} $\us{cont}(\tau)\in C_w$ of $\tau\in\us{SSYCT}$, the \textit{standardization} $\us{st}(\tau)$ of $\tau$ with respect to the order defined as (\ref{st}),
 the \textit{descent} set of $\tau\in\us{SYCT}_n$ as \[\us{des}(\tau)=\{k\in[n-1]:
k+1\mbox{ appears weakly left of }k\mbox{ in }\tau\},\]
and the associated composition $c(\tau)$.
Given $\al=(\al_1,\dots,\al_r)\in C$, there exists a unique SYCT $U_\al$
satisfying $\us{sh}(U_\al) =\al$ and $c(U_\al)=\al$. In $U_\al$ the $i$th row is filled with $\al_1+\cdots+\al_{i-1}+1,\dots,\al_1+\cdots+\al_i$ from left to right successively for $1\leq i\leq r$. For example,
\[U_{323}=~\raisebox{1.8em}{\xy 0;/r.18pc/:
(0,0)*{};(15,0)*{}**\dir{-};
(0,-5)*{};(15,-5)*{}**\dir{-};
(0,-10)*{};(15,-10)*{}**\dir{-};
(0,-15)*{};(15,-15)*{}**\dir{-};
(0,0)*{};(0,-15)*{}**\dir{-};
(5,0)*{};(5,-15)*{}**\dir{-};
(10,0)*{};(10,-15)*{}**\dir{-};
(15,0)*{};(15,-5)*{}**\dir{-};
(15,-10)*{};(15,-15)*{}**\dir{-};
(2.5,-2.5)*{1};(7.5,-2.5)*{2};(2.5,-7.5)*{4};
(2.5,-12.5)*{6};(12.5,-2.5)*{3};(7.5,-7.5)*{5};(7.5,-12.5)*{7};
(12.5,-12.5)*{8};
\endxy}~,\tau=~\raisebox{1.8em}{\xy 0;/r.18pc/:
(0,0)*{};(15,0)*{}**\dir{-};
(0,-5)*{};(15,-5)*{}**\dir{-};
(0,-10)*{};(15,-10)*{}**\dir{-};
(0,-15)*{};(15,-15)*{}**\dir{-};
(0,0)*{};(0,-15)*{}**\dir{-};
(5,0)*{};(5,-15)*{}**\dir{-};
(10,0)*{};(10,-15)*{}**\dir{-};
(15,0)*{};(15,-5)*{}**\dir{-};
(15,-10)*{};(15,-15)*{}**\dir{-};
(2.5,-2.5)*{\bullet};(7.5,-2.5)*{\bullet};(2.5,-7.5)*{\bullet};
(2.5,-12.5)*{1};(12.5,-2.5)*{1};(7.5,-7.5)*{3};(7.5,-12.5)*{2};
(12.5,-12.5)*{3};
\endxy}~\rightsquigarrow\us{st}(\tau)=~\raisebox{1.8em}{\xy 0;/r.18pc/:
(0,0)*{};(15,0)*{}**\dir{-};
(0,-5)*{};(15,-5)*{}**\dir{-};
(0,-10)*{};(15,-10)*{}**\dir{-};
(0,-15)*{};(15,-15)*{}**\dir{-};
(0,0)*{};(0,-15)*{}**\dir{-};
(5,0)*{};(5,-15)*{}**\dir{-};
(10,0)*{};(10,-15)*{}**\dir{-};
(15,0)*{};(15,-5)*{}**\dir{-};
(15,-10)*{};(15,-15)*{}**\dir{-};
(2.5,-2.5)*{\bullet};(7.5,-2.5)*{\bullet};(2.5,-7.5)*{\bullet};
(2.5,-12.5)*{1};(12.5,-2.5)*{2};(7.5,-7.5)*{4};(7.5,-12.5)*{3};
(12.5,-12.5)*{5};
\endxy}~,\us{des}(\us{st}(\tau))=\{2,3\}\]

Next we introduce a key bijection found by Mason \cite{Mas}. Let
SSYCT$(-\pa\al)$ denote the set of all SSYCT with \textit{base shape} $\al$, and SSYT$(-/\tilde{\al})$
denote the set of all SSYT with base shape $\tilde{\al}$.
\begin{proposition}[{\cite[\S 4.3]{LMW}}]
There exists a bijection
\[\rho_\al:\us{SSYCT}(-\pa\al)\rightarrow\us{SSYT}(-/\tilde{\al}),\]
  mapping $\tau\in\us{SSYCT}(-\pa\al)$ to be $\rho_\al(\tau)\in\us{SSYT}(-/\tilde{\al})$ obtained by writing the entries in each column of $\tau$ in increasing order and top justifying these new columns on the base shape $\tilde{\al}$ if it exists.
\end{proposition}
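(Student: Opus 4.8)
The plan is to check that $\rho_\al$ is a well-defined map into $\us{SSYT}(-/\tilde\al)$ and then to exhibit a two-sided inverse; both tasks amount to extracting consequences of the triple rule and, in the reverse direction, of the ordinary semistandard conditions. First I would verify well-definedness. Since the cells of $\tau$ in a given column $k$ are never top-justified out of that column, the multiset of entries of each column is preserved, and because the number of cells in column $k$ of a (skew) composition diagram equals the number of rows reaching column $k$ — a count unchanged when the rows are sorted by length — the sorted columns do assemble onto $\tilde\al$ into a genuine skew diagram $\delta/\tilde\al$ with $\delta=\tilde\gamma$, where $\gamma$ is the outer shape of $\tau$. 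It then remains to see that the output is semistandard. Column-strictness is automatic once one shows that every column of an SSYCT already has distinct entries: if $\tau(j,k+1)=\tau(i,k+1)$ with $j<i$, then the weak increase of row $i$ gives $\tau(i,k)\le\tau(i,k+1)=\tau(j,k+1)$ (or else $(i,k)$ lies in the base shape), which is exactly the hypothesis of the triple rule and forces $\tau(i,k+1)<\tau(j,k+1)$, a contradiction.

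The substantive point for well-definedness is that the rows of $\rho_\al(\tau)$ weakly increase. I would argue this by comparing two adjacent columns after sorting: if $a_1<a_2<\cdots$ and $b_1<b_2<\cdots$ are the sorted entries of columns $k$ and $k+1$, I must show $a_r\le b_r$ in every row $r$ where both output cells are present. The triple rule is precisely the hypothesis needed to run this monotone matching: it says that whenever an entry in column $k+1$ is weakly dominated along its row by the entry to its left, that left entry is in fact strictly smaller, and feeding this into the two sorted orders propagates the inequality $a_r\le b_r$ upward through the columns. I expect this entrywise-domination argument to be the most delicate piece of the forward direction.

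Finally I would construct $\rho_\al^{-1}$ and verify that the two composites are identities. Since $\rho_\al$ only permutes entries within columns, its inverse must redistribute the skew entries of each column of a given $S\in\us{SSYT}(\delta/\tilde\al)$ back among the rows of a composition diagram; the number of skew cells in column $k$ of $S$ recovers the partition $\tilde\gamma$ but not the composition $\gamma$ itself, so the reverse row-assignment must reconstruct the order of the rows purely from the entry values, subject to the triple rule. The crux — and what I expect to be the main obstacle of the whole proposition — is to show that this reconstruction exists and is unique, i.e. that the semistandard axioms on $S$ are exactly strong enough to pin down one composition shape $\gamma$ and one triple-rule-respecting filling with $\rho_\al(\tau)=S$. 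Granting this, $\rho_\al\circ\rho_\al^{-1}$ and $\rho_\al^{-1}\circ\rho_\al$ are both the identity, because each map leaves the column-content untouched and fixes the row of every entry uniquely, so the two procedures undo one another box by box.
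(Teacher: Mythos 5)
First, a point of comparison: the paper does not prove this proposition at all --- it is imported verbatim from \cite[\S 4.3]{LMW}, and the text following the statement merely records the algorithm for $\rho_\al^{-1}$ without verifying anything. So there is no in-paper argument to measure you against; your outline follows the standard route one would take (well-definedness of the column-sorting map via the triple rule, then an explicit greedy inverse), and the pieces you do carry out are sound. In particular, your derivation of column-distinctness is correct: if $\tau(j,k+1)=\tau(i,k+1)$ with $i>j$, then either $(i,k)$ lies in the base shape or row-weak-increase gives $\tau(i,k)\le\tau(j,k+1)$, and the triple rule then forces $\tau(i,k+1)<\tau(j,k+1)$, a contradiction. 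The bookkeeping about column multiplicities being invariant under passing from $\ga$ to $\tilde\ga$ is also fine, since the number of parts of size at least $k$ is a rearrangement invariant.

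The problem is that the two steps carrying essentially all of the content are announced rather than executed. For the forward direction, the claim that the sorted columns satisfy $a_r\le b_r$ entrywise is exactly where the triple rule has to be converted into a counting or matching argument (e.g.\ showing that for each entry $b$ of column $k+1$ the number of column-$k$ entries that are $\le b$, together with the base-shape cells of column $k$, is large enough); you say you ``expect this entrywise-domination argument to be the most delicate piece'' but do not give it, and without it semistandardness of $\rho_\al(\tau)$ is unestablished. For the inverse, you correctly identify that one must show the greedy row-assignment (i) always has a legal row available for each entry, (ii) produces a filling satisfying the triple rule, and (iii) yields the \emph{unique} preimage --- and you label this ``the crux'' and ``the main obstacle'' without resolving it. Note also that uniqueness of the preimage is a separate assertion from correctness of the specific greedy algorithm; one needs an argument that any triple-rule-respecting redistribution of the column entries must agree with the greedy one (typically by induction on columns, using the triple rule to show an entry placed in any higher-indexed admissible row would violate it). As written, your submission is a correct plan with the load-bearing lemmas left as declared gaps, so it does not yet constitute a proof.
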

  When $\al=\emptyset$, we abbreviate $\rho_\emptyset$ as $\rho$. Note that by definition,  if given $\tau\in\us{SSYCT}(\al\pa\be)$, then $\rho_\be(\tau)\in\us{SSYT}(\tilde{\al}/\tilde{\be})$. Conversely, the inverse map $\rho_\al^{-1}:\us{SSYT}(-/\tilde{\al})\rightarrow\us{SSYCT}(-\pa\al)$
 is defined as follows. Given $T\in\us{SSYT}$,

\noindent 1. If the first column of the base shape has $i$ boxes, then take the set of entries in the
first column of $T$ and write them in increasing order in rows $i+1,i+2,\dots$ to form the first column of $\tau$.

\noindent 2. Take the set of entries in column 2 in increasing order and place them in the row
with the largest index so that either

 $\bullet$ the box to the immediate left of the number being placed is filled and the row
entries weakly increase when read from left to right

 $\bullet$ the box to the immediate left of the number being placed belongs to the base shape.

\noindent 3. Repeat the previous step with the set of entries in column $k$ for $k=3,\dots,\tilde{\al}_1$.

For example,
\[\raisebox{1.8em}{\xy 0;/r.18pc/:
(0,0)*{};(20,0)*{}**\dir{-};
(0,-5)*{};(20,-5)*{}**\dir{-};
(0,-10)*{};(15,-10)*{}**\dir{-};
(0,-15)*{};(10,-15)*{}**\dir{-};
(0,0)*{};(0,-15)*{}**\dir{-};
(5,0)*{};(5,-15)*{}**\dir{-};
(10,0)*{};(10,-15)*{}**\dir{-};
(15,0)*{};(15,-10)*{}**\dir{-};
(20,0)*{};(20,-5)*{}**\dir{-};
(2.5,-2.5)*{\bullet};(7.5,-2.5)*{\bullet};(12.5,-2.5)*{4};
(17.5,-2.5)*{5};
(2.5,-7.5)*{\bullet};
(2.5,-12.5)*{2};(7.5,-7.5)*{1};(7.5,-12.5)*{3};
(12.5,-7.5)*{6};
\endxy}~\stackrel{\rho^{-1}_{(1,2)}}{\longrightarrow}~
\raisebox{1.8em}{\xy 0;/r.18pc/:
(0,0)*{};(10,0)*{}**\dir{-};
(0,-5)*{};(15,-5)*{}**\dir{-};
(0,-10)*{};(20,-10)*{}**\dir{-};
(0,-15)*{};(20,-15)*{}**\dir{-};
(0,0)*{};(0,-15)*{}**\dir{-};
(5,0)*{};(5,-15)*{}**\dir{-};
(10,0)*{};(10,-15)*{}**\dir{-};
(15,-5)*{};(15,-15)*{}**\dir{-};
(20,-10)*{};(20,-15)*{}**\dir{-};
(2.5,-2.5)*{\bullet};(7.5,-2.5)*{1};(2.5,-7.5)*{\bullet};
(2.5,-12.5)*{2};(12.5,-7.5)*{6};(7.5,-7.5)*{\bullet};(7.5,-12.5)*{3};
(12.5,-12.5)*{4};(17.5,-12.5)*{5};
\endxy}\]

Now given $\tau\in\us{SSYCT}$, its \textit{column reading word}, denoted by $w_{col}(\tau)$, is obtained by
listing the entries from the leftmost column in decreasing order, followed by the entries from the second leftmost column, again in decreasing order, and so on. Define the \textit{rectification} $\us{rect}(\tau)$ as $\rho^{-1}(P(w_{col}(\tau)))$. Note that rectification preserves descents of tableaux, i.e. $\us{des}(\us{rect}(\tau))=\us{des}(\tau)$. As in \cite{BLW}, we define the \textit{$C$-equivalence} of permutations as
\[w\sim_Cw'\mbox{ if and only if }Q(w)=Q(w')\mbox{ and }\us{sh}(\rho^{-1}(P(w)))=\us{sh}(\rho^{-1}(P(w'))),\]
 then the \textit{$C$-equivalence} of SYCT with skew shapes as
\[\tau\sim_C\tau'\mbox{ if and only if }w_{col}(\tau)\sim_Cw_{col}(\tau')\mbox{ and }\us{sh}(\tau)=\us{sh}(\tau').\]
Denote the $C$-equivalence class of $w\in\mathfrak{S}$ (resp. $\tau\in\us{SYCT}$) by $[w]_C$ (resp. $[\tau]_C$) and it gives the following key result
\begin{proposition}[{\cite[Prop. 4.4]{BLW}}]\label{ceq}
For any $\tau\in\us{SYCT}$, $\{w_{col}(\tau'):\tau'\in[\tau]_C\}=[w_{col}(\tau)]_C$.
\end{proposition}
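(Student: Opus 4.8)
The plan is to transport the statement through Mason's bijection $\rho_\be$ into the classical world of skew tableaux of partition shape, where jeu de taquin and dual equivalence are available, and then to read the conclusion back. Throughout write $w := w_{col}(\tau)$ and $\mu/\nu := \tilde{\al}/\tilde{\be}$ for the straight skew shape attached to $\al\pa\be$. The first observation I would record is that $\rho_\be$ merely sorts the entries within each column and repositions the columns, never moving an entry between columns; since $w_{col}$ reads each column in decreasing order it depends only on the column multisets, whence $w_{col}(\rho_\be(\tau)) = w_{col}(\tau)$. Thus $\rho_\be$ identifies the $\us{SYCT}$ of shape $\al\pa\be$ with a subset of $\us{SYT}(\mu/\nu)$, preserving column reading words, where one must keep in mind that the single straight shape $\mu/\nu$ gathers together all composition shapes $\al'\pa\be$ with $\tilde{\al'} = \tilde{\al}$. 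Granting this, the inclusion $\subseteq$ is immediate: if $\tau'\in[\tau]_C$ then by definition $w_{col}(\tau')\sim_C w$, that is, $w_{col}(\tau')\in[w]_C$.

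For the reverse inclusion I would fix $v\in[w]_C$ and construct an $\us{SYCT}$ $\tau'$ of shape $\al\pa\be$ with $w_{col}(\tau') = v$; since this forces $w_{col}(\tau') = v\sim_C w$, hence $\tau'\in[\tau]_C$, it suffices. Put $S := \rho_\be(\tau)\in\us{SYT}(\mu/\nu)$, so $w = w_{col}(S)$ and the rectification of $S$ (equivalently $P(w)$) has some straight shape $\la$. The hypothesis $v\in[w]_C$ yields first $Q(v) = Q(w)$, so $v$ and $w$ are dual Knuth (coplactic) equivalent, and comparing recording shapes gives $P(v)\in\us{SYT}(\la)$. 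By Haiman's theorem on dual equivalence, rectification restricts to a bijection from the dual equivalence class of $S$ inside $\us{SYT}(\mu/\nu)$ onto $\us{SYT}(\la)$; I let $S'$ be the unique member of this class whose rectification is $P(v)$. Being dual equivalent to $S$, it has $Q(w_{col}(S')) = Q(w_{col}(S)) = Q(v)$, while $P(w_{col}(S')) = P(v)$, so injectivity of RSK gives $w_{col}(S') = v$. Setting $\tau' := \rho_\be^{-1}(S')$ produces an $\us{SYCT}$ with $w_{col}(\tau') = v$ and whose shape $\al'\pa\be$ at least satisfies $\tilde{\al'} = \tilde{\al}$.

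The step I expect to be the genuine obstacle is upgrading $\tilde{\al'} = \tilde{\al}$ to the equality of compositions $\al' = \al$, that is, controlling the composition shape and not only its underlying partition; this is exactly the content of the second clause of $v\in[w]_C$. Indeed $\us{rect}(\tau') = \rho^{-1}(P(w_{col}(\tau'))) = \rho^{-1}(P(v))$ and similarly for $\tau$, so the clause $\us{sh}(\rho^{-1}(P(v))) = \us{sh}(\rho^{-1}(P(w)))$ says precisely that $\tau'$ and $\tau$ share the same straight composition rectification shape $\ga$. I would therefore establish a rigidity lemma: among $\us{SYCT}$ of fixed base shape $\be$ lying in one coplactic class (fixed $Q$), the skew composition shape is determined by the straight composition shape of the rectification, so that $\al'$ is a $\sim_C$-invariant of the reading word and hence equals $\al'(w) = \al$. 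The natural route is to combine descent preservation, $\us{des}(\us{rect}(\tau')) = \us{des}(\tau')$, with an analysis of how Mason's maps $\rho$ and $\rho_\be^{-1}$ distribute the columns of the rectification across rows, checking that the recording tableau $Q$ forces this distribution to agree for $\tau$ and $\tau'$. Once $\al' = \al$ is secured, $\tau'$ has shape $\al\pa\be = \us{sh}(\tau)$ and the reverse inclusion follows.
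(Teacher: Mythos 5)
First, a remark on the comparison itself: the paper does not prove this proposition --- it is quoted verbatim from \cite[Prop.\ 4.4]{BLW} --- so there is no internal proof to measure your argument against; what follows is an assessment of your proposal on its own terms.

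Your architecture is sensible and several steps are sound: $w_{col}(\rho_\be(\tau))=w_{col}(\tau)$ does hold since $\rho_\be$ only sorts within columns; the inclusion $\subseteq$ is indeed immediate from the definitions; and Haiman's theorem does give you, for each $v$ with $Q(v)=Q(w)$ and $\us{sh}(P(v))=\us{sh}(P(w))$, a unique $S'\in\us{SYT}(\tilde{\al}/\tilde{\be})$ dual equivalent to $S=\rho_\be(\tau)$ with $\us{rect}(S')=P(v)$, hence (granting that dual equivalence of skew tableaux preserves the recording tableau of the \emph{column} word --- true, but itself asserted rather than argued, since the standard statements are phrased via row words) an $S'$ with $w_{col}(S')=v$. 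The problem is that the proof then stops exactly where the proposition begins. The ``rigidity lemma'' you defer --- that for $S'$ in the relevant sub-class the composition shape $\us{sh}(\rho_\be^{-1}(S'))$ equals $\al\pa\be$ on the nose, not merely up to rearrangement of parts --- is not a finishing touch but is logically equivalent to the surjectivity you are trying to prove. Nothing in the two invariants you have in hand ($Q(v)=Q(w)$ and $\us{sh}(\rho^{-1}(P(v)))=\us{sh}(\rho^{-1}(P(w)))$) obviously controls how Mason's inverse map redistributes the columns of $S'$ into rows over the base shape $\be$: within a single dual equivalence class the composition rectification shape $\us{sh}(\rho^{-1}(\us{rect}(S')))$ already varies, and passing to the sub-class where it is constant is precisely where the second clause of $\sim_C$ enters; showing that this constancy propagates from the rectification to the skew composition shape is the actual technical content of \cite{BLW}'s Section 4 (it requires tracking the triple rule through $\rho_\be^{-1}$, essentially a composition-tableau analogue of jeu de taquin). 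Your proposed route --- ``descent preservation plus an analysis of how the columns are distributed'' --- names the right objects but is not an argument, so the proof as written is incomplete at its only nontrivial point.
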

By the RSK-correspondence, Mason's bijection $\rho$ gives
\[\us{SYCT}(\al)
\stackrel{1:1}{\longleftrightarrow}\{P(w)\in\us{SYT}(\tilde{\al}):w\in[w']_C\}
\stackrel{1:1}{\longleftrightarrow}[w']_C\]
for any $w'\in\mathfrak{S}$ satisfying $\us{sh}(\rho^{-1}(P(w')))=\al$. Combining it with Prop. \ref{ceq}, we have
\begin{equation}\label{rect}
\us{SYCT}(\al)=\{\us{rect}(\tau):\tau\in[\tau']_C\}\stackrel{1:1}{\longleftrightarrow}[\tau']_C
\end{equation}
for any $\tau'\in\us{SYCT}$ satisfying $\us{sh}(\us{rect}(\tau'))=\al$. In particular, given a class $[\tau']_C$ with $\us{sh}(\us{rect}(\tau'))=\al$, there exists a unique $\tau\in[\tau']_C$ such that $\us{rect}(\tau')=U_\al$, thus we have the following decomposition
\begin{equation}\label{dec}
\us{SYCT}(\ga\pa\be)=\bigcup_\al^\cdot\bigcup^\cdot_{\tau\in\us{\ti{SYCT}}(\ga\pa\be)
\atop\us{\ti{rect}}(\tau)=U_\al}[\tau]_C.
\end{equation}

In order to deal with the odd case, we also define the \textit{inversion number} of $\tau\in\us{SSYCT}(\al\pa\be)$ \textit{with respect to columns} as follows,
\[\us{inv}^c(\tau)=|\{((i,j),(i',j'))\in(\al\pa\be)^{\times2}:\tau(i,j)> \tau(i',j'),~j<j'\}|.\]
Similarly, we define $\us{inv}^c(T)$ for $T\in\us{SSYT}(\la/\mu)$. Since Mason's bijection $\rho_\be$ just rearranges entries in each column, we have $\us{inv}^c(\rho_{\be}(\tau))=\us{inv}^c(\tau)$,~$\us{st}(\rho_{\be}(\tau))=\rho_{\be}(\us{st}(\tau))$. Further if $\tau\in\us{SYCT}(\al\pa\be)$, then $\us{des}(\tau)=\us{des}(\rho_\be(\tau))$.

Note that for SSYT, inversions only appear in those strictly southwest-northeast pairs of boxes in $\la$, whose total number is denoted by $NE(\la)$. More explicitly, for any $((i,j),(i',j'))\in\la\times\la,~i>i',j<j'$ , it contributes 1 to $\us{inv}(T)$ if $T(i,j)\leq T(i',j')$ and to $\us{inv}^c(T)$ otherwise. As a result, we have
\begin{equation}\label{inv}
\us{inv}(T)+\us{inv}^c(T)=NE(\la)=\sum\limits_{\la_i>j\geq1\atop \la^T_j>i\geq1}(\la_i-j)(\la^T_j-i).
\end{equation}
From such identity and $\us{inv}(\us{st}(T))=\us{inv}(T)$, we get $\us{inv}^c(\us{st}(T))=\us{inv}^c(T)$.

\begin{example} Let $\la=431\vdash 8$ and $T=\raisebox{1.6em}{\xy 0;/r.16pc/:
(0,0)*{};(20,0)*{}**\dir{-};
(0,-5)*{};(20,-5)*{}**\dir{-};
(0,-10)*{};(15,-10)*{}**\dir{-};
(0,-15)*{};(5,-15)*{}**\dir{-};
(0,0)*{};(0,-15)*{}**\dir{-};
(5,0)*{};(5,-15)*{}**\dir{-};
(10,0)*{};(10,-10)*{}**\dir{-};
(15,0)*{};(15,-10)*{}**\dir{-};
(20,0)*{};(20,-5)*{}**\dir{-};
(2.5,-2.5)*{\stt{1}};(7.5,-2.5)*{\stt{2}};(12.5,-2.5)*{\stt{2}};(17.5,-2.5)*{\stt{5}};
(2.5,-7.5)*{\stt{2}};(7.5,-7.5)*{\stt{3}};(12.5,-7.5)*{\stt{4}};(2.5,-12.5)*{\stt{5}};
\endxy}~$. Now $NE(\la)=11$, $\us{inv}(T)=6$  and $\us{inv}^c(T)=5$.
\end{example}

Meanwhile, we still need the following lemma to track the signs.
\begin{lemma}\label{sile}
For any $\tau,\tau'\in\us{SYCT}$, if $\tau\sim_C\tau'$, then $(-1)^{\us{\s{inv}}^c(\tau)+\us{\s{inv}}^c(\us{\s{rect}}(\tau))}
=(-1)^{\us{\s{inv}}^c(\tau')+\us{\s{inv}}^c(\us{\s{rect}}(\tau'))}$.
\end{lemma}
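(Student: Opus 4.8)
The plan is to push everything onto the column reading words $w=w_{col}(\tau)$ and $w'=w_{col}(\tau')$, which are honest permutations since $\tau,\tau'$ are standard, and then to invoke the sign formula (\ref{sil}). I will rely on two bookkeeping identities. First, reading each column of $\tau$ in decreasing order, the inversions of $w$ split into the pairs of boxes sharing a column---of which there are $\sum_j\binom{c_j}{2}$, where $c_j$ is the number of boxes of $\tau$ in column $j$, since each such column block is decreasing---and the pairs of boxes in distinct columns with the larger entry to the left, which are counted exactly by $\us{inv}^c(\tau)$. Hence $\ell(w)=\us{inv}^c(\tau)+\sum_j\binom{c_j}{2}$, and likewise for $\tau'$; since $\us{sh}(\tau)=\us{sh}(\tau')$ the column-length term is the same, giving $\us{inv}^c(\tau)-\us{inv}^c(\tau')=\ell(w)-\ell(w')$. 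Second, $\us{rect}(\tau)=\rho^{-1}(P(w))$ and $\rho$ only reorders entries inside columns, so it preserves $\us{inv}^c$; thus $\us{inv}^c(\us{rect}(\tau))=\us{inv}^c(P(w))$, and since $P(w)$ is standard of shape $\us{sh}(P(w))$, identity (\ref{inv}) turns this into $NE(\us{sh}(P(w)))-\us{inv}(P(w))$.

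Now I would bring in the hypothesis $\tau\sim_C\tau'$, which by definition means $w\sim_Cw'$, hence $Q(w)=Q(w')$ and $\us{sh}(\rho^{-1}(P(w)))=\us{sh}(\rho^{-1}(P(w')))$. Writing $\al$ for this common composition shape, we get $\us{sh}(\us{rect}(\tau))=\us{sh}(\us{rect}(\tau'))=\al$ and $\us{sh}(P(w))=\us{sh}(P(w'))=\tilde{\al}$, while $Q(w)=Q(w')$ forces $\us{inv}(Q(w))=\us{inv}(Q(w'))$. The $NE(\tilde{\al})$ terms therefore cancel, leaving $\us{inv}^c(\us{rect}(\tau))-\us{inv}^c(\us{rect}(\tau'))=\us{inv}(P(w'))-\us{inv}(P(w))$. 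Applying (\ref{sil}) to $w$ and to $w'$, the shared contributions $\binom{\tilde{\al}^T}{2}$ and $\us{inv}(Q(w))=\us{inv}(Q(w'))$ drop out modulo $2$, yielding $\ell(w)-\ell(w')\equiv\us{inv}(P(w))-\us{inv}(P(w'))\pmod 2$.

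It then remains only to add the two differences. The exponent $[\us{inv}^c(\tau)-\us{inv}^c(\tau')]+[\us{inv}^c(\us{rect}(\tau))-\us{inv}^c(\us{rect}(\tau'))]$ equals $[\ell(w)-\ell(w')]-[\us{inv}(P(w))-\us{inv}(P(w'))]$, which is even by the congruence just obtained; hence the two signs agree, as claimed. I expect the only delicate step to be the first bookkeeping identity: one must correctly sort the inversions of the column reading word into within-column and cross-column contributions and confirm that the within-column part depends solely on $\us{sh}(\tau)$, so that it cancels between $\tau$ and $\tau'$. Everything after that is a mechanical assembly of (\ref{sil}), (\ref{inv}), the $\us{inv}^c$-invariance of Mason's bijection $\rho$, and the two equalities $Q(w)=Q(w')$ and $\us{sh}(\tau)=\us{sh}(\tau')$ furnished by the $C$-equivalence.
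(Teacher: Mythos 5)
Your proof is correct and follows essentially the same route as the paper's: both pass to the column reading words, split $\ell(w_{col}(\tau))$ into a within-column part $\sum_j\binom{c_j}{2}$ depending only on $\us{sh}(\tau)$ plus the cross-column part $\us{inv}^c(\tau)$, and then combine (\ref{sil}) with $Q(w)=Q(w')$ and the equality of the shapes of $P(w)$ and $P(w')$. The only difference is presentational --- the paper phrases the last step through $\us{sign}(P(w))$ while you unwind it through $\us{inv}(P(w))$ and $NE(\tilde{\al})$, which amounts to the same cancellation.
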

\begin{proof}
Let $w=w_{col}(\tau)$, $w'=w_{col}(\tau')$. The $C$-equivalence gives $Q(w)=Q(w')$, thus $\us{sign}(P(w))(-1)^{\ell(w)}=\us{sign}(P(w'))(-1)^{\ell(w')}$ by the identity (\ref{sil}). On the other hand, if $\tau\in\us{SYCT}(\ga\pa\be)$, then $\ell(w)-\us{inv}^c(\tau)=\sum\limits_i{\tilde{\ga}^T_i
-\tilde{\be}^T_i\choose2}$ by definition. As $\us{sh}(\tau)=\us{sh}(\tau')$, it means that  $\us{sign}(P(w))(-1)^{\us{\s{inv}}^c(\tau)}=\us{sign}(P(w'))(-1)^{\us{\s{inv}}^c(\tau')}$,
which is equivalent to $(-1)^{\us{\s{inv}}^c(\tau)+\us{\s{inv}}^c(\us{\s{rect}}(\tau))}
=(-1)^{\us{\s{inv}}^c(\tau')+\us{\s{inv}}^c(\us{\s{rect}}(\tau'))}$, for $P(w)$ and $P(w')$ also have the same shape.
\end{proof}

\subsection{Odd quasisymmetric Schur functions}
Now we're in the position to define the odd quasisymmetric Schur functions as follows. Rewrite $\ms{QSym}_{-1}$ as $\ms{OQSym}$ and the odd Schur function as
\[\begin{split}
s_\la&=(-1)^{\la^T\choose2}\sum\limits_{T\in\us{\s{SYT}}(\la)}
(-1)^{\us{\s{inv}}(T)}F_{c(T)}=(-1)^{{\la^T\choose2}}\sum\limits_{\tilde{\al}=\la}
\lb\sum\limits_{\tau\in\us{\s{SYCT}}(\al)}
(-1)^{\us{\s{inv}}(\rho(\tau))}F_{c(\tau)}\rb\mbox{ or }\\
s_\la&=(-1)^{{\la^T\choose2}+NE(\la)}
\sum\limits_{T\in\us{\s{SYT}}(\la)}
(-1)^{\us{\s{inv}}^c(T)}F_{c(T)}=(-1)^{{\la^T\choose2}+NE(\la)}\sum\limits_{\tilde{\al}=\la}
\lb\sum\limits_{\tau\in\us{\s{SYCT}}(\al)}
(-1)^{\us{\s{inv}}^c(\tau)}F_{c(\tau)}\rb,
\end{split}
\]
where the third equality comes from (\ref{inv}), the second and the fourth ones are due to Mason's bijection $\rho$. It inspires us to give
\begin{definition}
Given $\al\in C$, the \textit{odd quasisymmetric Schur function} $\mathscr{S}_\al\in\ms{OQSym}$ is defined by
\begin{equation}\label{oqs}
 \mathscr{S}_\al=(-1)^{NE(\tilde{\al})}\sum\limits_{\tau\in\us{\s{SYCT}}(\al)}
(-1)^{\us{\s{inv}}^c(\tau)}F_{c(\tau)},
\end{equation}
which means $s_\la=(-1)^{\la^T\choose2}\sum\limits_{\tilde{\al}=\la}\mathscr{S}_\al$.
\end{definition}

Similar to \cite[Prop. 5.5]{HLMW1}, we see that the coefficient matrix $(M_{\al,\be})_{\al,\be\in C}$ of $F_\be$ in $\mathscr{S}_\al$ is triangular and its diagonal consists of $(-1)^{\us{\s{inv}}(\rho(U_\al))},~\al\in C$, thus $\{\mathscr{S}_\al:\al\vDash n\}$ forms a $\mathbb{Z}$-basis of $\ms{OQSym}_n$.

In order to convince the readers of Definition \ref{oqs}, we will figure out the odd version of the Pieri rules for $\{\mathscr{S}_\al\}_{\al\in C}$ and the Littlewood-Richardson rules for its dual $\{\mathscr{S}^*_\al\}_{\al\in C}$.
We refer the dual $\mathscr{S}_\al^*\in\ms{NSym}$ as the \textit{Young noncommutative symmetric function}. First note that the image of $\mathscr{S}_\al^*$ under the forgetful map $\phi$ is $s_{\tilde{\al}}$. Indeed, by the orthogonality (\ref{ort}),
\[\phi(\mathscr{S}_\al^*)=\sum\limits_\mu(s_\mu,\phi(\mathscr{S}_\al^*))
(-1)^{\mu^T\choose2}s_\mu=\sum\limits_\mu\lan s_\mu,\mathscr{S}_\al^*\ran
(-1)^{\mu^T\choose2}s_\mu=\sum\limits_\mu\lan \sum\limits_{\tilde{\be}=\mu}\mathscr{S}_\be,\mathscr{S}_\al^*\ran
s_\mu=s_{\tilde{\al}}.\]
Meanwhile, there exists a natural bijection (similar to \cite[Prop. 2.15]{BLW})
\[f':\us{SSYCT}(\al\pa\be)\rightarrow\{(\tau,\ga)\in\us{SYCT}(\al\pa\be)\times C_w:\ga\succeq c(\tau)\},~\tau\mapsto(\us{st}(\tau),\us{cont}(\tau)),\]
which gives us the following expansion
\begin{lemma}\label{qs}
\begin{equation}
\mathscr{S}_\al=(-1)^{NE(\tilde{\al})}\sum\limits_{\tau\in\us{\s{SSYCT}}(\al)}
(-1)^{\us{\s{inv}}^c(\tau)}x^{\us{\s{cont}}(\tau)}=(-1)^{NE(\tilde{\al})}\sum\limits_{\tau\in\us{\ti{SSYCT}}(\al)\atop \us{\ti{cont}}(\tau)\vDash |\al|}
(-1)^{\us{\s{inv}}^c(\tau)}M_{\us{\s{cont}}(\tau)}.
\end{equation}
\end{lemma}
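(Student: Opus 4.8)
The plan is to derive the stated monomial and $q$-monomial expansions directly from the defining expansion (\ref{oqs}) of $\mathscr{S}_\al$ in the $q$-fundamental basis, reproducing on the level of Young composition tableaux exactly the passage used earlier to identify $\pi'|_{\ms{PR}'}(cl(T))$ with $s_\la$. The two tools are the bijection $f'$ sending $\sigma\in\us{SSYCT}(\al)$ to $(\us{st}(\sigma),\us{cont}(\sigma))$, and the standardization-invariance of the column inversion statistic, which is what makes the signs behave correctly in the odd setting.

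First I would record that $\us{inv}^c(\tau)=\us{inv}^c(\us{st}(\tau))$ for every $\tau\in\us{SSYCT}$. This needs no fresh computation: Mason's bijection $\rho$ preserves $\us{inv}^c$ and commutes with standardization, while for semistandard Young tableaux the identity (\ref{inv}) together with $\us{inv}(\us{st}(T))=\us{inv}(T)$ already yields $\us{inv}^c(\us{st}(T))=\us{inv}^c(T)$; chaining these facts through $\rho$ gives the claim for composition tableaux.

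Next I would expand each fundamental function appearing in (\ref{oqs}) as $F_{c(\tau)}=\sum_\ga x^\ga$, the sum being over weak compositions $\ga$ with $\al(\ga)\succeq c(\tau)$, which are precisely the $\ga\succeq c(\tau)$ occurring in the target of $f'$. Each pair $(\tau,\ga)$ in the resulting double sum is then $f'(\sigma)$ for a unique $\sigma\in\us{SSYCT}(\al)$ with $\us{st}(\sigma)=\tau$ and $\us{cont}(\sigma)=\ga$, and since the sign in (\ref{oqs}) is carried by $\tau=\us{st}(\sigma)$, the first step gives $(-1)^{\us{inv}^c(\tau)}=(-1)^{\us{inv}^c(\sigma)}$. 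Reindexing by $\sigma$ therefore turns (\ref{oqs}) into $(-1)^{NE(\tilde\al)}\sum_{\sigma\in\us{SSYCT}(\al)}(-1)^{\us{inv}^c(\sigma)}x^{\us{cont}(\sigma)}$, which is the first asserted equality.

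Finally I would collect the monomials with a common content-collapse into a single $q$-monomial function. Each $\sigma'\in\us{SSYCT}(\al)$ has content a weak composition collapsing to some $\be\vDash|\al|$, and there is exactly one $\sigma\in\us{SSYCT}(\al)$ with $\us{st}(\sigma)=\us{st}(\sigma')$ and $\us{cont}(\sigma)=\be$; by the first step these two tableaux carry the same sign. Since $M_\be=\sum_{\al(\ga)=\be}x^\ga$, summing over each collapse class reassembles the monomials into $(-1)^{\us{inv}^c(\sigma)}M_{\us{cont}(\sigma)}$ as $\sigma$ ranges over the SSYCT whose content is a genuine composition, giving the second equality. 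The only delicate point, and the place where the odd case departs from the classical one of \cite{HLMW1,BLW}, is the sign bookkeeping: one must verify that $(-1)^{\us{inv}^c}$ is constant along each standardization class and along each collapse class. I therefore expect the standardization-invariance of $\us{inv}^c$ established in the first step to be the real crux of the argument, with the remaining reindexing being routine.
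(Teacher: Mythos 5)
Your proposal is correct and follows essentially the same route as the paper: the paper derives this lemma directly from the defining expansion (\ref{oqs}) via the bijection $f'$, relying on the previously established facts that Mason's bijection preserves $\us{inv}^c$ and commutes with standardization and that $\us{inv}^c(\us{st}(T))=\us{inv}^c(T)$, which is exactly the sign bookkeeping you identify as the crux. The only difference is that the paper leaves the reindexing implicit, whereas you spell it out.
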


In order to state the Pieri rule for odd quasisymmetric Schur functions, we need to introduce three operators $\us{rem},\us{row},\us{col}$ on $C$.
 Let $\al= (\al_1,\dots,\al_r)\in C$ with largest part $m$ and $s\in[m]$. If there exists $1\leq i\leq k$ such that
$s=\al_i$ and $s\neq\al_j$ for all $j<i$, then define
\[\us{rem}_s(\al)=(\al_1,\dots,\al_{i-1},s-1,\al_{i+1},\dots,\al_r),\]
otherwise define $\us{rem}_s(\al)=\emptyset$. Let $S=\{s_1<\cdots<s_j\}$ and
define
\[\us{row}_S(\al)=\us{rem}_{s_1}(\cdots(\us{rem}_{s_{j-1}}(\us{rem}_{s_j}(\al))) \cdots).\]
Similarly, let $M=\{m_1\leq\cdots\leq m_j\}$ and define
\[\us{col}_M(\al)=\us{rem}_{m_j}(\cdots(\us{rem}_{m_2}(\us{rem}_{m_1}(\al))) \cdots).\]
We remove any zeros from $\us{row}_S(\al)$ or $\us{col}_M(\al)$ to obtain a composition if necessary.

For any horizontal strip $\de$ we denote by $S(\de)$ the set of columns its skew diagram
occupies, and for any vertical strip $\ep$ we denote by $M(\ep)$ the multiset of columns its
skew diagram occupies, where multiplicities for a column are given by the number
of boxes in that column and column indices are listed in weakly increasing order. We are now ready to state the odd analogous of the Pieri rules given in \cite[Theorem 5.4.2]{LMW}.
\begin{theorem}[{\bfseries Pieri rules for odd quasisymmetric Schur functions}]\label{pie1}
Let $\al$ be a composition. Then
\begin{equation}\label{hs}
\mathscr{S}_\al\mathscr{S}_{(n)}=
\sum\limits_{\be}(-1)^{{\tilde{\al}^T\choose2}+{\tilde{\be}^T\choose2}}oc_{\tilde{\al}(n)}^{\tilde{\be}}\mathscr{S}_\be,
\end{equation}
where the sum is taken over all compositions $\be$ such that

\noindent 1. $\de=\tilde{\be}/\tilde{\al}$ is a horizontal strip of size $n$,

\noindent 2. $\us{row}_{S(\de)}(\be)=\al$.

\noindent Also,
\begin{equation}
\mathscr{S}_\al\mathscr{S}_{(1^n)}=
\sum\limits_{\be}(-1)^{{\tilde{\al}^T\choose2}+{\tilde{\be}^T\choose2}}oc_{\tilde{\al}(1^n)}^{\tilde{\be}}\mathscr{S}_\be,
\end{equation}
where the sum is taken over all compositions $\be$ such that

\noindent 1. $\ep=\tilde{\be}/\tilde{\al}$ is a vertical strip of size $n$,

\noindent 2. $\us{col}_{M(\de)}(\be)=\al$.
\end{theorem}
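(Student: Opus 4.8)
The plan is to prove the horizontal-strip identity (\ref{hs}) in detail and to obtain the vertical-strip identity by the exactly parallel argument, since the two cases differ only in interchanging $\us{inv}$ with $\us{inv}^c$, the operator $\us{row}$ with $\us{col}$, and part (1) of Theorem \ref{pieri} with part (2). First I would reduce the left-hand side to a product involving an honest odd Schur function. From the definition (\ref{oqs}) one checks directly that the only SYCT of shape $(n)$ is the single increasing row, with $\us{inv}^c=0$, $NE((n))=0$ and $c(\tau)=(n)$, so $\mathscr{S}_{(n)}=F_{(n)}=h_n=s_{(n)}$; similarly $\mathscr{S}_{(1^n)}=M_{(1^n)}=(-1)^{n\choose2}s_{(1^n)}$. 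Thus it suffices to expand $\mathscr{S}_\al\cdot s_{(n)}$ in the basis $\{\mathscr{S}_\be\}$, which is legitimate because the transition matrix from $\{\mathscr{S}_\be\}$ to $\{F_\ga\}$ is triangular with invertible diagonal, as noted after Definition (\ref{oqs}).

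Next I would carry out the product at the level of fundamentals. Writing $\mathscr{S}_\al=(-1)^{NE(\tilde\al)}\sum_{\tau\in\us{SYCT}(\al)}(-1)^{\us{inv}^c(\tau)}F_{c(\tau)}$ and applying the multiplication rule (\ref{funm}) at $q=-1$, each product $F_{c(\tau)}F_{(n)}$ is a signed shuffle of a representative word of $\tau$ with the increasing run of the $n$ new largest letters; by the identity $(q^{\ell(w)}w)*'_q(q^{\ell(w')}w')=\sum_{u\in w*'w'}q^{\ell(u)}u$ used in Theorem \ref{mr}(2), the coefficient of each interleaving $u$ is exactly $(-1)^{\ell(u)-\ell(w)}$, i.e. one factor $-1$ for each new letter shuffled to the left of an old one. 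Passing through Mason's bijection $\rho$, which preserves $\us{inv}^c$, descents, and hence $c(\cdot)$, together with the RSK-correspondence, each such term is recorded by the addition of a horizontal strip to a standard Young tableau of shape $\tilde\al$. I would then reorganize the resulting signed terms into $C$-equivalence classes using the decomposition (\ref{dec}) and Proposition \ref{ceq}: the terms feeding a fixed $\mathscr{S}_\be$ constitute a single class, on which Lemma \ref{sile} shows the sign $(-1)^{\us{inv}^c(\tau)+\us{inv}^c(\us{rect}(\tau))}$ is constant, so the class collapses to a scalar multiple of $\mathscr{S}_\be$. This produces an expansion supported on those $\be$ with $\tilde\be/\tilde\al$ a horizontal strip of size $n$ and $\us{row}_{S(\de)}(\be)=\al$, matching the support of the classical rule \cite[Theorem 5.4.2]{LMW}.

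The coefficients are then pinned down by comparison with the symmetric situation. The key observation is that for a fixed $\be$ the equation $\us{row}_{S(\de)}(\be)=\al$ determines $\al$ uniquely (namely $\al=\us{row}_{S(\de)}(\be)$), so summing the asserted identity over all $\al$ with $\tilde\al=\la$ isolates each coefficient individually. On the other hand, using $s_\la=(-1)^{\la^T\choose2}\sum_{\tilde\al=\la}\mathscr{S}_\al$ and the odd Schur Pieri rule $s_\la s_{(n)}=\sum_\mu oc_{\la(n)}^\mu s_\mu$ of Theorem \ref{pieri}, together with the fact that each Pieri number $oc_{\la(n)}^\mu=(-1)^{\us{inv}((T_\la)_S)}$ is a single sign, the coefficient of $\mathscr{S}_\be$ in this summed identity is forced to equal $(-1)^{{\tilde\al^T\choose2}+{\tilde\be^T\choose2}}oc_{\tilde\al(n)}^{\tilde\be}$. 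The identity (\ref{inv}), $\us{inv}+\us{inv}^c=NE$, is precisely what converts the $\us{inv}^c$-signs generated by the shuffle into the $\us{inv}$-signs appearing in $oc$, and reconciles the prefactors $(-1)^{NE(\tilde\al)},(-1)^{NE(\tilde\be)}$ with $(-1)^{\tilde\al^T\choose2},(-1)^{\tilde\be^T\choose2}$.

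The \emph{main obstacle} I anticipate is the sign bookkeeping: one must verify that the shuffle signs, after Mason's bijection and the $C$-class regrouping, assemble into exactly $(-1)^{{\tilde\al^T\choose2}+{\tilde\be^T\choose2}}oc_{\tilde\al(n)}^{\tilde\be}$ and not merely up to an undetermined global sign, and in particular that no unexpected cancellation removes a term from the classical support. The summation argument controls cancellation a posteriori, since the forced coefficients are $\pm1$ and hence nonzero, but making this rigorous requires first establishing that the $q=-1$ support is contained in the classical one. The second delicate point is the combinatorial identification showing that Mason's map sends the horizontal strip added on the $\tilde\al$-side to the $\us{row}$-operation on the composition side; here I would rely on the explicit column descriptions of $\rho$ and $\rho^{-1}$, and for the vertical-strip case on the corresponding analysis with $\us{col}$ and $\us{inv}^c$.
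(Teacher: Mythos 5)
Your outline defers the one step that carries all the content of the theorem. The comparison device at the end is sound as far as it goes: \emph{granting} that $\mathscr{S}_\al\mathscr{S}_{(n)}$ is supported on the $\be$ satisfying conditions 1 and 2, summing over $\{\al:\tilde\al=\la\}$, using $s_\la=(-1)^{\la^T\choose2}\sum_{\tilde\al=\la}\mathscr{S}_\al$ and Theorem \ref{pieri}, and invoking the uniqueness of $\al=\us{row}_{S(\de)}(\be)$ does force each coefficient to be $(-1)^{{\tilde\al^T\choose2}+{\tilde\be^T\choose2}}oc^{\tilde\be}_{\tilde\al(n)}$. But that support statement is precisely the theorem minus its signs, and nothing in your proposal establishes it. Lemma \ref{sile}, Proposition \ref{ceq} and the decomposition (\ref{dec}) are statements about deconcatenation and rectification of skew SYCT; in the text they are deployed for the \emph{coproduct} (Theorem \ref{lrr}), and they say nothing about how the signed shuffles in $F_{c(\tau)}F_{(n)}$ regroup. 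What you would actually have to prove is that, under the insertion bijection of \cite{HLMW1}, the shuffle sign $(-1)^{\ell(u)-\ell(w)}$ attached to each interleaving differs from $(-1)^{\us{\s{inv}}^c(\sigma)}$ of the target $\sigma\in\us{SYCT}(\be)$ by a factor depending only on $(\al,\be,n)$; if this failed for a single $\be$-block, that block would spread over several $\mathscr{S}_{\be'}$ and condition 2 could fail, which your a posteriori summation cannot detect. The proof given in the text avoids the $F$-basis entirely: via Lemma \ref{qs}, Mason's bijection and (\ref{inv}) it writes $\mathscr{S}_\al=\sum_T(-1)^{\us{\s{inv}}(T)}x^{\us{\s{cont}}(T)}$ over $T\in\us{SSYT}(\tilde\al)$ with $\us{sh}(\rho^{-1}(T))=\al$, multiplies by $h_n$ monomial by monomial inside $A_{-1}$, and rests on the single local identity
\begin{equation*}
(-1)^{{\la^T\choose2}+\us{\s{inv}}(T)}x^{\us{\s{cont}}(T)}x_{k_1}\cdots x_{k_n}
=(-1)^{{\mu^T\choose2}+\us{\s{inv}}(T')}oc_{\la(n)}^{\mu}\,x^{\us{\s{cont}}(T')},\qquad
T'=(T\leftarrow k_1)\leftarrow\cdots\leftarrow k_n,
\end{equation*}
after which the classical insertion bijection delivers the support and the coefficient in one stroke. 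Some such term-by-term sign identity is unavoidable; you correctly name it as the main obstacle, but naming it is not supplying it.

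A second concrete problem concerns your claim that the vertical case follows by ``the exactly parallel argument.'' Your own reductions show the two cases are not parallel: $\mathscr{S}_{(n)}=h_n=s_{(n)}$, but $\mathscr{S}_{(1^n)}=e_n=(-1)^{n\choose2}s_{(1^n)}$. Feeding this into your comparison step, the left side of the summed vertical identity is $(-1)^{{\la^T\choose2}+{n\choose2}}s_\la s_{(1^n)}$ while the right side is $(-1)^{\la^T\choose2}s_\la s_{(1^n)}$, so the argument as you describe it produces a formula differing from the displayed one by $(-1)^{n\choose2}$ (the sign created by reordering $x_{k_1}\cdots x_{k_n}$ with $k_1>\cdots>k_n$ into standard order in $A_{-1}$ before inserting). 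You must confront and resolve this discrepancy before asserting the second identity.
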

\begin{proof}
We only prove the horizontal strip case below, the other case is similar. First note that the proof in \cite[Theorem 6.3]{HLMW1} for the original case is based on an analogy to Schensted insertion for SSYCT which commutes with Mason's bijection $\rho$, i.e. $\rho(k\rightarrow\tau)=\rho(\tau)\leftarrow k$. And the rules were proved on the plactic monoid level indeed, i.e.
\[\lb\sum\limits_{T\in\us{\ti{SSYT}}(\tilde{\al})
\atop\us{\ti{sh}}(\rho^{-1}(T))=\al}T\rb
\lb\sum\limits_{T\in\us{\s{SSYT}}((n))}T\rb=
\sum\limits_\be\sum\limits_{T\in\us{\ti{SSYT}}(\tilde{\be})
\atop\us{\ti{sh}}(\rho^{-1}(T))=\be}T,\]
where the sum is over all $\be\in C$ satisfying the two conditions in the theorem. What we need to do is replacing it with the odd plactic monoid and taking care of the signs.

In fact, for any $T\in\us{SSYT}(\la)$ and $k_1\leq\cdots\leq k_n$, we have the identity
\[(-1)^{{\la^T\choose2}+\us{\s{inv}}(T)}x^{\us{\s{cont}}(T)}x_{k_1}\cdots x_{k_n}=
(-1)^{{\mu^T\choose2}+\us{\s{inv}}(T')}oc_{\la(n)}^\mu x^{\us{\s{cont}}(T')},\]
where $T'=(T\leftarrow k_1)\leftarrow\cdots\leftarrow k_n\in\us{SSYT}(\mu)$ and $oc_{\la(n)}^\mu$ accumulates the number of boxes strictly southwest to the new box added to the intermediate diagram step by step with increasing labels as in Remark \ref{rem}. Now by Lemma \ref{qs}, \[\mathscr{S}_\al=\sum\limits_{T\in\us{\ti{SSYT}}(\tilde{\al})
\atop\us{\ti{sh}}(\rho^{-1}(T))=\al}
(-1)^{\us{\s{inv}}(T)}x^{\us{\s{cont}}(T)},~\mathscr{S}_{(n)}=s_{(n)}=\sum\limits_{k_1\leq\cdots\leq k_n}x_{k_1}\cdots x_{k_n}.
\]
Hence,
\[\begin{split}
\mathscr{S}_\al\mathscr{S}_{(n)}&=\sum\limits_\be(-1)^{{\tilde{\al}^T\choose2}+{\tilde{\be}^T\choose2}}
oc_{\tilde{\al}(n)}^{\tilde{\be}}\sum\limits_{T\in\us{\ti{SSYT}}(\tilde{\be})
\atop\us{\ti{sh}}(\rho^{-1}(T))=\be}
(-1)^{\us{\s{inv}}(T)}x^{\us{\s{cont}}(T)}\\
&=\sum\limits_\be(-1)^{{\tilde{\al}^T\choose2}+{\tilde{\be}^T\choose2}}
oc_{\tilde{\al}(n)}^{\tilde{\be}}\mathscr{S}_\be.
\end{split}\]
\end{proof}
\begin{remark}
Since $\ms{OQSym}$ is no longer commutative, we only have a right version of the Pieri rules for $\mathscr{S}_\al$, thus a left version for $\mathcal {S}_\al$.
\end{remark}

\begin{example}
Take $\al=12$ and $n=2$, then $\tilde{\al}=21,~(-1)^{\tilde{\al}^T\choose2}=-1$. SYCT$(12)=\left\{~\raisebox{1.2em}{\xy 0;/r.16pc/:
(0,0)*{};(5,0)*{}**\dir{-};
(0,-5)*{};(10,-5)*{}**\dir{-};
(0,-10)*{};(10,-10)*{}**\dir{-};
(0,0)*{};(0,-10)*{}**\dir{-};
(5,0)*{};(5,-10)*{}**\dir{-};
(10,-5)*{};(10,-10)*{}**\dir{-};
(2.5,-2.5)*{\stt{1}};(2.5,-7.5)*{\stt{2}};(7.5,-7.5)*{\stt{3}};
\endxy}~\right\}$, thus $\mathscr{S}_{12}=-F_{12}$. We have
\[\begin{split}
\mathscr{S}_{12}\mathscr{S}_2&=-F_{12}F_2=-F_{c(213)}F_{c(12)}=-\sum\limits_w\lan 213\osh45,w\ran F_{c(w)}\\
&=-F_{14}+F_{122}-F_{23}
+F_{113}-F_{131}+F_{221}-F_{32}+F_{122}-F_{1121}-F_{212}.
\end{split}\]
\[\begin{array}{l}
\tilde{\be}=41:~(-1)^{\tilde{\be}^T\choose2}=-1,~oc_{21,2}^{41}=1\\
\be=14:~\us{SYCT}(12)=\left\{~\raisebox{1.2em}{\xy 0;/r.16pc/:
(0,0)*{};(5,0)*{}**\dir{-};
(0,-5)*{};(20,-5)*{}**\dir{-};
(0,-10)*{};(20,-10)*{}**\dir{-};
(0,0)*{};(0,-10)*{}**\dir{-};
(5,0)*{};(5,-10)*{}**\dir{-};
(10,-5)*{};(10,-10)*{}**\dir{-};
(15,-5)*{};(15,-10)*{}**\dir{-};
(20,-5)*{};(20,-10)*{}**\dir{-};
(2.5,-2.5)*{\stt{1}};(2.5,-7.5)*{\stt{2}};(7.5,-7.5)*{\stt{3}};
(12.5,-7.5)*{\stt{4}};(17.5,-7.5)*{\stt{5}};
\endxy}~\right\},~\mathscr{S}_{14}=-F_{14}\\
\\\hline\\
\tilde{\be}=32:~(-1)^{\tilde{\be}^T\choose2}=1,~oc_{21,2}^{32}=1\\
\be=23:~\us{SYCT}(23)=\left\{~\raisebox{1.2em}{\xy 0;/r.16pc/:
(0,0)*{};(10,0)*{}**\dir{-};
(0,-5)*{};(15,-5)*{}**\dir{-};
(0,-10)*{};(15,-10)*{}**\dir{-};
(0,0)*{};(0,-10)*{}**\dir{-};
(5,0)*{};(5,-10)*{}**\dir{-};
(10,0)*{};(10,-10)*{}**\dir{-};
(15,-5)*{};(15,-10)*{}**\dir{-};
(2.5,-2.5)*{\stt{1}};(7.5,-2.5)*{\stt{2}};(2.5,-7.5)*{\stt{3}};
(7.5,-7.5)*{\stt{4}};(12.5,-7.5)*{\stt{5}};
\endxy}~,~\raisebox{1.2em}{\xy 0;/r.16pc/:
(0,0)*{};(10,0)*{}**\dir{-};
(0,-5)*{};(15,-5)*{}**\dir{-};
(0,-10)*{};(15,-10)*{}**\dir{-};
(0,0)*{};(0,-10)*{}**\dir{-};
(5,0)*{};(5,-10)*{}**\dir{-};
(10,0)*{};(10,-10)*{}**\dir{-};
(15,-5)*{};(15,-10)*{}**\dir{-};
(2.5,-2.5)*{\stt{1}};(7.5,-2.5)*{\stt{4}};(2.5,-7.5)*{\stt{2}};
(7.5,-7.5)*{\stt{3}};(12.5,-7.5)*{\stt{5}};
\endxy}~,~\raisebox{1.2em}{\xy 0;/r.16pc/:
(0,0)*{};(10,0)*{}**\dir{-};
(0,-5)*{};(15,-5)*{}**\dir{-};
(0,-10)*{};(15,-10)*{}**\dir{-};
(0,0)*{};(0,-10)*{}**\dir{-};
(5,0)*{};(5,-10)*{}**\dir{-};
(10,0)*{};(10,-10)*{}**\dir{-};
(15,-5)*{};(15,-10)*{}**\dir{-};
(2.5,-2.5)*{\stt{1}};(7.5,-2.5)*{\stt{5}};(2.5,-7.5)*{\stt{2}};
(7.5,-7.5)*{\stt{3}};(12.5,-7.5)*{\stt{4}};
\endxy}~\right\},~\mathscr{S}_{23}=F_{23}-F_{122}+F_{131}\\
\be=32:~\us{SYCT}(32)=\left\{~\raisebox{1.2em}{\xy 0;/r.16pc/:
(0,0)*{};(15,0)*{}**\dir{-};
(0,-5)*{};(15,-5)*{}**\dir{-};
(0,-10)*{};(10,-10)*{}**\dir{-};
(0,0)*{};(0,-10)*{}**\dir{-};
(5,0)*{};(5,-10)*{}**\dir{-};
(10,0)*{};(10,-10)*{}**\dir{-};
(15,0)*{};(15,-5)*{}**\dir{-};
(2.5,-2.5)*{\stt{1}};(7.5,-2.5)*{\stt{2}};(12.5,-2.5)*{\stt{3}};
(2.5,-7.5)*{\stt{4}};(7.5,-7.5)*{\stt{5}};
\endxy}~\right\},~\mathscr{S}_{32}=F_{32}\\\hline\\
\tilde{\be}=311:~(-1)^{\tilde{\be}^T\choose2}=-1,~oc_{21,2}^{311}=1\\
\be=113:~\us{SYCT}(113)=\left\{~\raisebox{1.7em}{\xy 0;/r.16pc/:
(0,0)*{};(5,0)*{}**\dir{-};
(0,-5)*{};(5,-5)*{}**\dir{-};
(0,-10)*{};(15,-10)*{}**\dir{-};
(0,-15)*{};(15,-15)*{}**\dir{-};
(0,0)*{};(0,-15)*{}**\dir{-};
(5,0)*{};(5,-15)*{}**\dir{-};
(10,-10)*{};(10,-15)*{}**\dir{-};
(15,-10)*{};(15,-15)*{}**\dir{-};
(2.5,-2.5)*{\stt{1}};(2.5,-7.5)*{\stt{2}};(2.5,-12.5)*{\stt{3}};
(7.5,-12.5)*{\stt{4}};(12.5,-12.5)*{\stt{5}};
\endxy}~\right\},~\mathscr{S}_{113}=F_{113}\\\hline\\
\tilde{\be}=221:~(-1)^{\tilde{\be}^T\choose2}=1,~oc_{21,2}^{221}=-1\\
\be=122:~\us{SYCT}(122)=\left\{~\raisebox{1.5em}{\xy 0;/r.16pc/:
(0,0)*{};(5,0)*{}**\dir{-};
(0,-5)*{};(10,-5)*{}**\dir{-};
(0,-10)*{};(10,-10)*{}**\dir{-};
(0,-15)*{};(10,-15)*{}**\dir{-};
(0,0)*{};(0,-15)*{}**\dir{-};
(5,0)*{};(5,-15)*{}**\dir{-};
(10,-5)*{};(10,-15)*{}**\dir{-};
(2.5,-2.5)*{\stt{1}};(2.5,-7.5)*{\stt{2}};(7.5,-7.5)*{\stt{3}};
(2.5,-12.5)*{\stt{4}};(7.5,-12.5)*{\stt{5}};
\endxy}~,~\raisebox{1.5em}{\xy 0;/r.16pc/:
(0,0)*{};(5,0)*{}**\dir{-};
(0,-5)*{};(10,-5)*{}**\dir{-};
(0,-10)*{};(10,-10)*{}**\dir{-};
(0,-15)*{};(10,-15)*{}**\dir{-};
(0,0)*{};(0,-15)*{}**\dir{-};
(5,0)*{};(5,-15)*{}**\dir{-};
(10,-5)*{};(10,-15)*{}**\dir{-};
(2.5,-2.5)*{\stt{1}};(2.5,-7.5)*{\stt{2}};(7.5,-7.5)*{\stt{5}};
(2.5,-12.5)*{\stt{3}};(7.5,-12.5)*{\stt{4}};
\endxy}~\right\},~\mathscr{S}_{122}=F_{122}-F_{1121}\\
\be=221:~\us{SYCT}(122)=\left\{~\raisebox{1.5em}{\xy 0;/r.16pc/:
(0,0)*{};(10,0)*{}**\dir{-};
(0,-5)*{};(10,-5)*{}**\dir{-};
(0,-10)*{};(10,-10)*{}**\dir{-};
(0,-15)*{};(5,-15)*{}**\dir{-};
(0,0)*{};(0,-15)*{}**\dir{-};
(5,0)*{};(5,-15)*{}**\dir{-};
(10,0)*{};(10,-10)*{}**\dir{-};
(2.5,-2.5)*{\stt{1}};(7.5,-2.5)*{\stt{2}};(2.5,-7.5)*{\stt{3}};
(7.5,-7.5)*{\stt{4}};(2.5,-12.5)*{\stt{5}};
\endxy}~\right\},~\mathscr{S}_{221}=F_{221}\\
\be=212:~\us{SYCT}(212)=\left\{~\raisebox{1.5em}{\xy 0;/r.16pc/:
(0,0)*{};(10,0)*{}**\dir{-};
(0,-5)*{};(10,-5)*{}**\dir{-};
(0,-10)*{};(10,-10)*{}**\dir{-};
(0,-15)*{};(10,-15)*{}**\dir{-};
(0,0)*{};(0,-15)*{}**\dir{-};
(5,0)*{};(5,-15)*{}**\dir{-};
(10,0)*{};(10,-5)*{}**\dir{-};
(10,-10)*{};(10,-15)*{}**\dir{-};
(2.5,-2.5)*{\stt{1}};(7.5,-2.5)*{\stt{2}};(2.5,-7.5)*{\stt{3}};
(2.5,-12.5)*{\stt{4}};(7.5,-12.5)*{\stt{5}};
\endxy}~\right\},~\mathscr{S}_{212}=-F_{212}
\end{array}\]
Now one can see that equation (\ref{hs}) holds.
\end{example}

\begin{theorem}[{\bfseries Littlewood-Richardson rule for Young noncommutative Schur
functions}]\label{lrr}
Let $\al,\be$ be compositions. Then
\[\mathscr{S}_\al^*\mathscr{S}_\be^*=\sum\limits_\ga OC_{\al\be}^\ga\mathscr{S}_\ga^*, \]
where $OC_{\al\be}^\ga:=\sum\limits_{S\in\us{\ti{SYT}}(\tilde{\ga}/
\tilde{\al}),~\us{\ti{sh}}(\rho^{-1}_\al(S))=\ga\pa\al\atop\us{\ti{rect}}(S)=\tilde{U}_{\tilde{\be}}}
(-1)^{\us{\s{inv}}((T_{\tilde{\al}})_S)
+\us{\s{inv}}(\tilde{U}_{\tilde{\be}})},~\tilde{U}_{\tilde{\be}}=\rho(U_\be)$.
Equivalently, we also have
\begin{equation}\label{dq}
\De'(\mathscr{S}_\ga)=\sum\limits_{\al,\be}OC_{\al\be}^\ga\mathscr{S}_\al\ot \mathscr{S}_\be,
\end{equation}
\end{theorem}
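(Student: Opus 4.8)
The plan is to prove the coproduct identity (\ref{dq}); the product identity for $\mathscr{S}_\al^*\mathscr{S}_\be^*$ is then immediate. Indeed, $\ms{OQSym}=\ms{QSym}_{-1}$ and $\ms{NSym}$ are graded dual $q$-Hopf algebras at $q=-1$ (Prop. \ref{qdual}), and $\{\mathscr{S}_\al^*\}$ is by definition the basis of $\ms{NSym}$ dual to $\{\mathscr{S}_\al\}$. Since the product of $\ms{NSym}$ is dual to the coproduct $\De'$ of $\ms{OQSym}$, the coefficient of $\mathscr{S}_\ga^*$ in $\mathscr{S}_\al^*\mathscr{S}_\be^*$ equals $\lan\De'(\mathscr{S}_\ga),\mathscr{S}_\al^*\ot\mathscr{S}_\be^*\ran$, i.e. the coefficient of $\mathscr{S}_\al\ot\mathscr{S}_\be$ in $\De'(\mathscr{S}_\ga)$. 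So it suffices to establish (\ref{dq}).

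To compute $\De'(\mathscr{S}_\ga)$ I would work from the fundamental expansion. Combining (\ref{oqs}) with Mason's bijection $\rho$ and the identity (\ref{inv}) rewrites the definition as $\mathscr{S}_\ga=\sum_{T}(-1)^{\us{inv}(T)}F_{c(T)}$, where $T$ ranges over $\us{SYT}(\tilde\ga)$ with $\us{sh}(\rho^{-1}(T))=\ga$. Since $\pi'_q(w)=F_{c(w)}$ and $c(w)=c(Q(w))$, each $F_{c(T)}$ is a $\pi'_q$-image, so $\De'(\mathscr{S}_\ga)$ may be evaluated through the deconcatenation coproduct $\De'(w)=\sum_{i}\us{st}(w_1\cdots w_i)\ot\us{st}(w_{i+1}\cdots w_n)$ of $\ms{MR}'_q$. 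Via the RSK correspondence this cut of the reading word is governed, exactly as in $\De'(c_q(T))=\sum_{T=T_1\cdot T_2}q^{\us{inv}(T_1,T_2)}c_q(\us{st}(T_1))\ot c_q(\us{st}(T_2))$, by the dot-factorizations $T=T_1\cdot T_2$ of the insertion tableau together with a skew standard tableau $S$ of shape $\tilde\ga/\tilde\al$ recording the embedding of the right part; this reproduces, at the plactic level, the classical coproduct rule for Young quasisymmetric Schur functions.

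Next I would reorganize the resulting terms by the $C$-equivalence classes. Using the disjoint decomposition (\ref{dec}) and the bijection (\ref{rect}), together with Prop. \ref{ceq} and the fact that $\rho$ commutes with standardization, column insertion and $\us{inv}^c$ (as exploited in the proof of Theorem \ref{pie1}), the left factors sum over a fixed class into a single $\mathscr{S}_\al$, the shape $\al=\us{sh}(\rho^{-1}(T_1))$ being constant on the class; the right factors are sorted by the rectification target of $\us{st}(T_2)$, those rectifying to $\tilde U_{\tilde\be}=\rho(U_\be)$ assembling into $\mathscr{S}_\be$. What remains is indexed by a skew standard tableau $S\in\us{SYT}(\tilde\ga/\tilde\al)$ subject to $\us{sh}(\rho_\al^{-1}(S))=\ga\pa\al$ (which fixes the outer composition shape $\ga$) and $\us{rect}(S)=\tilde U_{\tilde\be}$; these are precisely the indices of $OC_{\al\be}^\ga$.

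The genuine work, and the expected main obstacle, is the sign bookkeeping and its compatibility with this regrouping. I would combine the factor $q^{\us{inv}(T_1,T_2)}=(-1)^{\us{inv}(T_1,T_2)}$ with the global signs $(-1)^{NE(\tilde\al)},(-1)^{NE(\tilde\be)},(-1)^{NE(\tilde\ga)}$ and the internal signs $(-1)^{\us{inv}^c(\tau)}$ of (\ref{oqs}), reducing everything through (\ref{sil}) and (\ref{inv}) to signs of standard tableaux. Comparison with the odd Littlewood-Richardson rule $oc_{\la\mu}^\nu=\sum_{S}(-1)^{\us{inv}((T_\la)_S)}$ of Section 5.2 forces the exponent $\us{inv}((T_{\tilde\al})_S)$, while the normalizing summand $\us{inv}(\tilde U_{\tilde\be})$ records the reference sign of the rectification target. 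The delicate point is that these signs be constant on each $C$-class, so that the reassembly into $\mathscr{S}_\al\ot\mathscr{S}_\be$ is sign-coherent; Lemma \ref{sile} is designed exactly for this, guaranteeing that $(-1)^{\us{inv}^c(\tau)+\us{inv}^c(\us{rect}(\tau))}$ is a class invariant, and verifying that the deconcatenation of reading words respects the $C$-equivalence is where I expect the argument to require the most care.
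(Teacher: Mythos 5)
Your overall architecture coincides with the paper's: reduce to the coproduct identity (\ref{dq}) by the duality of Proposition \ref{qdual}, expand $\mathscr{S}_\ga$ in the fundamental basis, apply $\De'$ termwise, and reassemble the right-hand tensor factors into $\mathscr{S}_\be$'s by grouping over $C$-equivalence classes via (\ref{dec}), (\ref{rect}) and Lemma \ref{sile}. These are exactly the ingredients of the paper's argument, which works at the level of SYCT with the operators $u_k,l_k$. The skeleton is therefore right, but two points in the middle would not survive being carried out as written.

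First, you attribute the cut of $F_{c(T)}$ to the dot-factorizations $T=T_1\cdot T_2$ of the insertion tableau and propose to import the sign $q^{\us{inv}(T_1,T_2)}$ from the formula for $\De'(c_q(T))$. That formula decomposes the $P$-tableau underlying $c_q(T)$; here the tableau indexing $F_{c(T)}$ plays the role of the recording tableau, and $\De'(F_{c(T)})$ is governed by (\ref{fun}), i.e.\ by the restriction decomposition into the subtableau with entries $1,\dots,k$ and the standardized skew tableau on the entries $k+1,\dots,n$ (in the paper, $\tau=u_k(\tau)\cup(l_{n-k}(\tau)+k)$). On such a cut every entry of the lower piece is smaller than every entry of the upper piece, so $\us{inv}(T_1,T_2)=0$ and your proposed cross sign is identically $+1$. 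The correct cross term is positional, not value-based: the paper needs $\us{inv}^c(\tau)=\us{inv}^c(u_k(\tau))+\us{inv}^c(l_{n-k}(\tau))+\us{inv}^c(\be\pa\al,\ga\pa\be)$, whose last summand depends only on the pair of shapes and is generally nonzero, and it is this term, combined at the end with $\us{inv}^c((T_{\tilde{\al}})_S)=\us{inv}^c(\tilde{\al},\tilde{\ga}/\tilde{\al})+\us{inv}^c(T_{\tilde{\al}})+\us{inv}^c(S)$ and the conversion (\ref{inv}) from $\us{inv}^c$ to $\us{inv}$, that produces the exponent $\us{inv}((T_{\tilde{\al}})_S)$ rather than $\us{inv}(S)$. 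Second, the sign bookkeeping is the entire content of the theorem (the unsigned combinatorics is already known), and ``comparison with the odd Littlewood--Richardson rule forces the exponent'' reverse-engineers the answer rather than deriving it. You correctly single out Lemma \ref{sile} as the device making the right-hand factors sign-coherent on each $C$-class --- that is indeed the crux --- but the additive splitting of $\us{inv}^c$ across the cut and its translation through Mason's bijection must be performed explicitly for the stated formula for $OC_{\al\be}^\ga$ to emerge.
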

\begin{proof}
We choose to prove the second equation and adopt the method in \cite[Prop. 3.1]{BLW}. Given $\tau\in\us{SYCT}_n$ and an integer $k$, $0\leq k\leq n$, we denote by  $u_k(\tau)$ the SYCT obtained by removing from $\tau$ the boxes numbered $\{k+1,\dots,n\}$, and by $l_k(\tau)$  the standardization of the SYCT comprising the boxes of $\tau$ with entries $\{k+1,\dots,n\}$ with the lower-numbered boxes being added to the base shape. For example,
\[\tau=~\raisebox{2.2em}{\xy 0;/r.18pc/:
(0,0)*{};(15,0)*{}**\dir{-};
(0,-5)*{};(15,-5)*{}**\dir{-};
(0,-10)*{};(15,-10)*{}**\dir{-};
(0,-15)*{};(20,-15)*{}**\dir{-};
(0,-20)*{};(20,-20)*{}**\dir{-};
(0,0)*{};(0,-20)*{}**\dir{-};
(5,0)*{};(5,-20)*{}**\dir{-};
(10,0)*{};(10,-20)*{}**\dir{-};
(15,0)*{};(15,-5)*{}**\dir{-};
(15,-10)*{};(15,-20)*{}**\dir{-};
(20,-15)*{};(20,-20)*{}**\dir{-};
(2.5,-2.5)*{\bullet};(7.5,-2.5)*{\bullet};(2.5,-7.5)*{\bullet};
(2.5,-12.5)*{\bullet};(12.5,-2.5)*{5};(7.5,-7.5)*{3};(7.5,-12.5)*{\bullet};
(12.5,-12.5)*{1};(2.5,-17.5)*{\bullet};(7.5,-17.5)*{2};
(12.5,-17.5)*{4};(17.5,-17.5)*{6};
\endxy}~,~l_3(\tau)=~\raisebox{2.2em}{\xy 0;/r.18pc/:
(0,0)*{};(15,0)*{}**\dir{-};
(0,-5)*{};(15,-5)*{}**\dir{-};
(0,-10)*{};(15,-10)*{}**\dir{-};
(0,-15)*{};(20,-15)*{}**\dir{-};
(0,-20)*{};(20,-20)*{}**\dir{-};
(0,0)*{};(0,-20)*{}**\dir{-};
(5,0)*{};(5,-20)*{}**\dir{-};
(10,0)*{};(10,-20)*{}**\dir{-};
(15,0)*{};(15,-5)*{}**\dir{-};
(15,-10)*{};(15,-20)*{}**\dir{-};
(20,-15)*{};(20,-20)*{}**\dir{-};
(2.5,-2.5)*{\bullet};(7.5,-2.5)*{\bullet};(2.5,-7.5)*{\bullet};
(2.5,-12.5)*{\bullet};(12.5,-2.5)*{2};(7.5,-7.5)*{\bullet};(7.5,-12.5)*{\bullet};
(12.5,-12.5)*{\bullet};(2.5,-17.5)*{\bullet};(7.5,-17.5)*{\bullet};
(12.5,-17.5)*{1};(17.5,-17.5)*{3};
\endxy}~,~u_3(\tau)=~\raisebox{2.2em}{\xy 0;/r.18pc/:
(0,0)*{};(10,0)*{}**\dir{-};
(0,-5)*{};(10,-5)*{}**\dir{-};
(0,-10)*{};(15,-10)*{}**\dir{-};
(0,-15)*{};(15,-15)*{}**\dir{-};
(0,-20)*{};(10,-20)*{}**\dir{-};
(0,0)*{};(0,-20)*{}**\dir{-};
(5,0)*{};(5,-20)*{}**\dir{-};
(10,0)*{};(10,-20)*{}**\dir{-};
(15,-10)*{};(15,-15)*{}**\dir{-};
(2.5,-2.5)*{\bullet};(7.5,-2.5)*{\bullet};(2.5,-7.5)*{\bullet};
(2.5,-12.5)*{\bullet};(7.5,-7.5)*{3};(7.5,-12.5)*{\bullet};
(12.5,-12.5)*{1};(2.5,-17.5)*{\bullet};(7.5,-17.5)*{2};
\endxy}.\]
We denote by $\tau+k$ the tableau obtained by adding $k$ to every entry of $\tau$. Let $\tau_1$ be a filling of the diagram $\be\pa\al$, $\tau_2$ a filling of the diagram $\ga\pa\be$, then we denote by $\tau_1\cup\tau_2$ the natural filling of the diagram $\ga\pa\al$. Define
\[\us{inv}^c(\be\pa\al,\ga\pa\be):=|\{((i,j),(i',j'))\in\be\pa\al\times\ga\pa\be:
j>j'\}|.\]
 Note that for any $\tau\in\us{SYCT}(\ga\pa\al),~n=|\ga\pa\al|$ and $\eta_1,\eta_2\in C$ such that $\eta_1\eta_2=c(\tau)$ or $\eta_1\vee\eta_2=c(\tau)$,  there exists a unique integer $k:0\leq k\leq n$ such that $\tau=u_k(\tau)\cup(l_{n-k}(\tau)+k),~c(u_k(\tau))=\eta_1$ and $c(l_{n-k}(\tau))=\eta_2$. Meanwhile, if $\us{sh}(u_k(\tau))=\be\pa\al,~\us{sh}(l_{n-k}(\tau))=\ga\pa\be$, then one can see that
 \[\us{inv}^c(\tau)=\us{inv}^c(u_k(\tau))+\us{inv}^c(l_{n-k}(\tau))+\us{inv}^c(\be\pa\al,\ga\pa\be).\]

Now combining with the coproduct rule (\ref{fun}), we have \[\begin{split}
\De'(\mathscr{S}_\ga)&=(-1)^{NE(\tilde{\ga})}\sum\limits_{\tau\in\us{\s{SYCT}}(\ga)}
(-1)^{\us{\s{inv}}^c(\tau)}\De(F_{c(\tau)})=(-1)^{NE(\tilde{\ga})}\sum\limits_{\tau\in\us{\s{SYCT}}(\ga)}
(-1)^{\us{\s{inv}}^c(\tau)}\sum\limits_{k=0}^nF_{c(u_k(\tau))}\ot F_{c(l_k(\tau))}\\
&=(-1)^{NE(\tilde{\ga})}
\sum\limits_{\al\lessdot\ga}(-1)^{\us{\s{inv}}^c(\al,\ga\pa\al)}
\sum\limits_{\tau_1\in\us{\ti{SYCT}}(\al)\atop\tau_2\in\us{\ti{SYCT}}(\ga\pa
\al)}(-1)^{\us{\s{inv}}^c(\tau_1)}F_{c(\tau_1)}\ot (-1)^{\us{\s{inv}}^c(\tau_2)}F_{c(\tau_2)}\\
&=(-1)^{NE(\tilde{\ga})}
\sum\limits_{\al\lessdot\ga}
\mathscr{S}_\al
\ot\lb\sum\limits_{\tau_2\in\us{\s{SYCT}}(\ga\pa
\al)}(-1)^{\us{\s{inv}}^c(\al,\ga\pa\al)+NE(\tilde{\al})+\us{\s{inv}}^c(\tau_2)}F_{c(\tau_2)}\rb
\end{split}\]

On the other hand, from (\ref{rect}) and the fact that rectification preserves descents, we have
\[\mathscr{S}_\al=(-1)^{NE(\tilde{\al})}\sum\limits_{\tau\in[\tau']_C}
(-1)^{\us{\s{inv}}^c(\us{\s{rect}}(\tau))}F_{c(\tau)}\]
for any $\tau'\in\us{SYCT}$ satisfying $\us{sh}(\us{rect}(\tau'))=\al$. Combining it with the decomposition (\ref{dec}), we get
\[\begin{split}
&\quad\sum\limits_{\tau\in\us{\s{SYCT}}(\ga\pa
\al)}(-1)^{\us{\s{inv}}^c(\al,\ga\pa\al)+NE(\tilde{\al})+\us{\s{inv}}^c(\tau)}F_{c(\tau)}\\
&=\sum\limits_\be\lb\sum\limits_{\tau'\in\us{\ti{SYCT}}(\ga\pa
\al)\atop\us{\ti{rect}}(\tau')=U_\be}(-1)^{\us{\s{inv}}^c(\al,\ga\pa\al)
+NE(\tilde{\al})+NE(\tilde{\be})}\rb
\lb\sum\limits_{\tau\in[\tau']_C}(-1)^{\us{\s{inv}}^c(\tau)}F_{c(\tau)}\rb\\
&=\sum\limits_\be\lb\sum\limits_{\tau'\in\us{\ti{SYCT}}(\ga\pa
\al)\atop\us{\ti{rect}}(\tau')=U_\be}(-1)^{\us{\s{inv}}^c(\al,\ga\pa\al)+
\us{\s{inv}}^c(\tau')+\us{\s{inv}}^c(U_\be)
+NE(\tilde{\al})+NE(\tilde{\be})}\rb
\lb\sum\limits_{\tau\in[\tau']_C}(-1)^{\us{\s{inv}}^c(\us{\s{rect}}(\tau))}F_{c(\tau)}\rb\\
&=\sum\limits_\be\lb\sum\limits_{\tau'\in\us{\ti{SYCT}}(\ga\pa
\al)\atop\us{\ti{rect}}(\tau')=U_\be}(-1)^{\us{\s{inv}}^c(\al,\ga\pa\al)+
\us{\s{inv}}^c(\tau')+\us{\s{inv}}^c(U_\be)
+NE(\tilde{\al})+NE(\tilde{\be})}\rb\mathscr{S}_\be,
\end{split}
\]
where the second equality is due to Lemma \ref{sile}.

Let $OC_{\al\be}^\ga:=\sum\limits_{\tau\in\us{\ti{SYCT}}(\ga\pa
\al)\atop\us{\ti{rect}}(\tau)=U_\be}(-1)^{\us{\s{inv}}^c(\al,\ga\pa\al)+\us{\s{inv}}^c(\tau)
+\us{\s{inv}}^c(U_\be)+NE(\tilde{\al})+NE(\tilde{\be})+NE(\tilde{\ga})}$, and we have \[\De'(\mathscr{S}_\ga)=\sum\limits_{\al,\be}OC_{\al\be}^\ga\mathscr{S}_\al\ot\mathscr{S}_\be.\]
Now we simplify the coefficient $OC_{\al\be}^\ga$ by rewriting it in terms of SYT via Mason's bijection. First note that if $\tau\in\us{SYCT}(\ga\pa
\al)$, then $\us{rect}(\rho_\al(\tau))=P(w_{col}(\tau))=\rho(\us{rect}(\tau))$. Hence,
\[
\begin{split}
OC_{\al\be}^\ga &=\sum\limits_{\tau\in\us{\ti{SYCT}}(\ga\pa
\al)\atop\us{\ti{rect}}(\tau)=U_\be}(-1)^{\us{\s{inv}}^c(\al,\ga\pa\al)+\us{\s{inv}}^c(\tau)
+\us{\s{inv}}^c(U_\be)+NE(\tilde{\al})+NE(\tilde{\be})+NE(\tilde{\ga})}\\
&=\sum\limits_{\us{\ti{sh}}(\rho^{-1}_\al(S))=\ga\pa\al\atop\us{\ti{rect}}(S)=\tilde{U}_{\tilde{\be}}}
(-1)^{\us{\s{inv}}^c(\tilde{\al},\tilde{\ga}/\tilde{\al})+\us{\s{inv}}^c(S)
+\us{\s{inv}}^c(\tilde{U}_{\tilde{\be}})+NE(\tilde{\al})+NE(\tilde{\be})+NE(\tilde{\ga})}\\
&=\sum\limits_{\us{\ti{sh}}(\rho^{-1}_\al(S))=\ga\pa\al\atop\us{\ti{rect}}(S)=\tilde{U}_{\tilde{\be}}}
(-1)^{\us{\s{inv}}^c((T_{\tilde{\al}})_S)
+\us{\s{inv}}^c(\tilde{U}_{\tilde{\be}})+\us{\s{inv}}^c(T_{\tilde{\al}})+NE(\tilde{\al})+NE(\tilde{\be})+NE(\tilde{\ga})}\\
&=\sum\limits_{\us{\ti{sh}}(\rho^{-1}_\al(S))=\ga\pa\al\atop\us{\ti{rect}}(S)=\tilde{U}_{\tilde{\be}}}
(-1)^{\us{\s{inv}}((T_{\tilde{\al}})_S)
+\us{\s{inv}}(\tilde{U}_{\tilde{\be}})},
\end{split}
\]
where we use the identity
$\us{inv}^c((T_{\tilde{\al}})_S)=\us{inv}^c(\tilde{\al},\tilde{\ga}/\tilde{\al})
+\us{inv}^c(T_{\tilde{\al}})+\us{inv}^c(S)$ for the third equality and $\us{inv}(T_{\tilde{\al}})=0$ for the fourth.
\end{proof}

\begin{corollary}
 Given $\al,\be\in C$, we have $oc_{\la\mu}^\nu=\sum\limits_{\tilde{\ga}=\nu}OC_{\al\be}^\ga$ for $\la=\tilde{\al},~\mu=\tilde{\be}$.
\end{corollary}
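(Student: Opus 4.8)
The plan is to obtain this identity directly by applying the forgetful map $\phi:\ms{NSym}\rightarrow\ms{QSym}_q$ (specialized to $q=-1$) to the Littlewood-Richardson rule for Young noncommutative symmetric functions established in Theorem \ref{lrr}. The essential inputs are already in place: $\phi$ is a homomorphism of $q$-Hopf algebras, so in particular it is multiplicative; we have shown earlier in this section that $\phi(\mathscr{S}_\al^*)=s_{\tilde{\al}}$ for every $\al\in C$; and the odd Schur functions $\{s_\nu\}$ form a basis of $\ms{OSym}$, hence are linearly independent.

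First I would apply $\phi$ to both sides of $\mathscr{S}_\al^*\mathscr{S}_\be^*=\sum_\ga OC_{\al\be}^\ga\mathscr{S}_\ga^*$. On the left, multiplicativity of $\phi$ together with $\phi(\mathscr{S}_\al^*)=s_{\tilde{\al}}=s_\la$ and $\phi(\mathscr{S}_\be^*)=s_{\tilde{\be}}=s_\mu$ gives
\[
\phi(\mathscr{S}_\al^*\mathscr{S}_\be^*)=\phi(\mathscr{S}_\al^*)\,\phi(\mathscr{S}_\be^*)=s_\la s_\mu.
\]
On the right, linearity of $\phi$ and $\phi(\mathscr{S}_\ga^*)=s_{\tilde{\ga}}$ yield
\[
\phi\lb\sum_\ga OC_{\al\be}^\ga\mathscr{S}_\ga^*\rb
=\sum_\ga OC_{\al\be}^\ga\, s_{\tilde{\ga}}
=\sum_\nu\lb\sum_{\tilde{\ga}=\nu}OC_{\al\be}^\ga\rb s_\nu,
\]
where in the last step I simply regroup the sum over compositions $\ga$ according to their underlying partition $\tilde{\ga}=\nu$.

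Comparing the two expressions gives $s_\la s_\mu=\sum_\nu\lb\sum_{\tilde{\ga}=\nu}OC_{\al\be}^\ga\rb s_\nu$, while the odd Littlewood-Richardson rule states $s_\la s_\mu=\sum_\nu oc_{\la\mu}^\nu s_\nu$. Matching coefficients of the linearly independent basis elements $s_\nu$ then forces $oc_{\la\mu}^\nu=\sum_{\tilde{\ga}=\nu}OC_{\al\be}^\ga$, as claimed. There is no real obstacle here beyond bookkeeping; the only point deserving a word of care is ensuring that the regrouping and the coefficient comparison are legitimate, which is guaranteed precisely by the linear independence of the odd Schur functions (Prop.\ \ref{bp}(3) and the orthogonality \eqref{ort}). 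This corollary should be read as the statement that the composition-level odd Littlewood-Richardson numbers $OC_{\al\be}^\ga$ refine the partition-level numbers $oc_{\la\mu}^\nu$, exactly paralleling how $\mathscr{S}_\al$ refines $s_{\tilde{\al}}$.
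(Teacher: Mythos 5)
Your proof is correct and is essentially the paper's own argument: the paper's proof likewise cites the two odd Littlewood--Richardson rules together with the identity $\phi(\mathscr{S}_\al^*)=s_{\tilde{\al}}$, and you have simply written out the application of $\phi$, the regrouping by $\tilde{\ga}=\nu$, and the coefficient comparison that this one-line justification leaves implicit. The only difference is that the paper also remarks the identity could alternatively be deduced from the multiplication rule (\ref{pr'm}) of $\ms{PR}'_q$, a route you do not pursue and do not need.
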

\begin{proof}
That's due to the two odd Littlewood-Richardson rules in $\ms{OSym}$ and $\ms{OQSym}$, together with the identity $\phi(\mathscr{S}_\al^*)=s_{\tilde{\al}}$. Meanwhile, one can also deduce it from the multiplication rule (\ref{pr'm}).
\end{proof}

\begin{corollary}[{\bfseries Pieri rules for Young noncommutative Schur functions}]\label{pie2}
Let $\al$ be a composition. Then
\[\mathscr{S}^*_\al\mathscr{S}^*_{(n)}=\sum\limits_\be oc_{\tilde{\al}(n)}^{\tilde{\be}}\mathscr{S}^*_\be,\]
where $\be$ runs over all compositions satisfying
$\al=\al^0\lessdot\al^1\lessdot\cdots\lessdot\al^n=\be$
whose column sequence is strictly increasing.
Also,
\[\mathscr{S}^*_\al\mathscr{S}^*_{(1^n)}=\sum\limits_\be oc_{\tilde{\al}(1^n)}^{\tilde{\be}}\mathscr{S}^*_\be,\]
where $\be$ runs over all compositions satisfying
$\al=\al^0\lessdot\al^1\lessdot\cdots\lessdot\al^n=\be$
whose column sequence is weakly decreasing.
\end{corollary}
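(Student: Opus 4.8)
The plan is to obtain both formulas by specializing the Littlewood--Richardson rule of Theorem \ref{lrr}, taking the second factor to be $\mathscr{S}^*_{(n)}$ (resp. $\mathscr{S}^*_{(1^n)}$). This gives directly $\mathscr{S}^*_\al\mathscr{S}^*_{(n)}=\sum_\ga OC^\ga_{\al(n)}\mathscr{S}^*_\ga$ (resp. with $(1^n)$), so the entire content is to evaluate the structure constant $OC^\ga_{\al(n)}$ and show it collapses to $oc^{\tilde\ga}_{\tilde\al(n)}$, with $\ga$ ranging over exactly the compositions described by the saturated-chain condition; at the end I relabel the output index $\ga$ as the $\be$ of the statement. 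The starting observation is that when the multiplier is $(n)$ the tableau $\rho(U_{(n)})$ appearing in $OC^\ga_{\al(n)}$ is the one-row standard tableau $1\,2\,\cdots\,n$, whence $\us{inv}(\rho(U_{(n)}))=0$ and the sign in Theorem \ref{lrr} reduces to $(-1)^{\us{inv}((T_{\tilde\al})_S)}$.

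Next I would analyze the summation condition defining $OC^\ga_{\al(n)}$. The requirement $\us{rect}(S)=\rho(U_{(n)})$ forces the row reading word of $S$ to rectify to the increasing word $1\,2\,\cdots\,n$, which happens precisely when $\tilde\ga/\tilde\al$ is a horizontal strip of size $n$ and $S$ is its unique standard filling that increases along the columns from left to right. For each such $\ga$ there is thus a single admissible $S$, and by Theorem \ref{pieri}(1) its contribution is exactly $(-1)^{\us{inv}((T_{\tilde\al})_S)}=oc^{\tilde\ga}_{\tilde\al(n)}$. Hence the inner sum collapses and $OC^\ga_{\al(n)}=oc^{\tilde\ga}_{\tilde\al(n)}$.

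It then remains to rewrite the last condition $\us{sh}(\rho^{-1}_\al(S))=\ga\pa\al$ in the poset language of the statement. By Proposition \ref{ycp} the datum of $\ga$ together with the tableau $\tau=\rho^{-1}_\al(S)\in\us{SYCT}(\ga\pa\al)$ encodes a saturated chain $\al=\al^0\lessdot\al^1\lessdot\cdots\lessdot\al^n=\ga$, and I would show that the pair of conditions ``$\tilde\ga/\tilde\al$ horizontal and $S$ increasing by column'' is equivalent to the column sequence $c_1,\dots,c_n$ of this chain being strictly increasing. This yields the first formula after relabelling $\ga$ as $\be$. The vertical-strip case is entirely parallel: with multiplier $(1^n)$ the tableau $\rho(U_{(1^n)})$ is the one-column standard tableau (again of inversion $0$), $\us{rect}(S)=\rho(U_{(1^n)})$ forces $\tilde\ga/\tilde\al$ to be a vertical strip, Theorem \ref{pieri}(2) supplies the coefficient $oc^{\tilde\ga}_{\tilde\al(1^n)}$, and the associated chain is characterized by a weakly decreasing column sequence.

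The main obstacle will be the combinatorial equivalence in the third step: verifying, through the interplay of Mason's bijection $\rho_\al$ with the chain--tableau correspondence of Proposition \ref{ycp}, that a SYCT of skew shape $\ga\pa\al$ whose rectification is the single row (resp. single column) is exactly one whose associated saturated chain has strictly increasing (resp. weakly decreasing) column sequence, and that $\tilde\ga/\tilde\al$ is then automatically a horizontal (resp. vertical) strip. Once this dictionary between ``rectifies to $U_{(n)}$ / $U_{(1^n)}$'' and the prescribed monotonicity of column sequences is established, all sign bookkeeping is already supplied by Theorem \ref{pieri}, so no further estimates are required.
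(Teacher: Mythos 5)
Your proposal is correct and follows essentially the same route as the paper: specialize the Littlewood--Richardson rule of Theorem \ref{lrr} to the second factor $\mathscr{S}^*_{(n)}$ (resp. $\mathscr{S}^*_{(1^n)}$), use the classical Pieri rule to see that the rectification condition singles out a unique $S$ exactly when $\tilde{\be}/\tilde{\al}$ is a horizontal (resp. vertical) strip, identify the surviving sign with $oc_{\tilde{\al}(n)}^{\tilde{\be}}$ via Theorem \ref{pieri}, and translate the remaining shape condition $\us{sh}(\rho_\al^{-1}(S))=\be\pa\al$ into the saturated-chain/column-sequence language through Mason's bijection and Proposition \ref{ycp}. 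The only difference is one of emphasis: you flag the dictionary between ``rectifies to $U_{(n)}$'' and monotone column sequences as the main remaining verification, which the paper's proof passes over silently.
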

\begin{proof}
Note that $U_{(n)}=\tilde{U}_{(n)}=~\xy 0;/r.15pc/:
(0,2.5)*{};(20,2.5)*{}**\dir{-};
(0,-2.5)*{};(20,-2.5)*{}**\dir{-};
(0,2.5)*{};(0,-2.5)*{}**\dir{-};
(5,2.5)*{};(5,-2.5)*{}**\dir{-};
(15,2.5)*{};(15,-2.5)*{}**\dir{-};
(20,2.5)*{};(20,-2.5)*{}**\dir{-};
(2.5,0)*{\stt{1}};(10,0)*{\dots};
(17.5,0)*{\stt{n}};
\endxy~$, and $|\{S\in\us{SYT}(\la/
\tilde{\al}):\us{rect}(S)=\tilde{U}_{(n)}\}|$ is 1 if $\la/\tilde{\al}$ is a horizontal strip of size $n$ and 0 otherwise by the classical Pieri rule. We just apply Mason's bijection $\rho_\al$ to get all those $\tau\in\us{SYCT}(\be\pa\al)$ corresponding to the saturated chains given in the theorem and see that $OC_{\al(n)}^{\be}=oc_{\tilde{\al}(n)}^{\tilde{\be}}$. The case for $\mathscr{S}^*_\al\mathscr{S}^*_{(1^n)}$ is similar.
\end{proof}

\section{$q$-dual graded graphs from $q$-Hopf algebras}
The notion of \textit{dual graded graphs} was first introduced by S. Fomin \cite{Fom} as a generalization of \textit{differential posets} due to R. Stanley \cite{Stan}. Hivert and Nzeutchap, and independently Lam and Shimozono constructed dual graded graphs from primitive elements in Hopf algebras. Here we apply the $q$-version defined by Lam \cite{Lam} to the $q$-Hopf algebras we discuss.
\begin{definition}
 A \textit{$q$-graded graph} $\Ga=(V,E,h,m)$ consists of a set of vertices $V$, a set of (directed) edges $E\subset V\times V$, a \textit{height} function $h:V\rightarrow \mathbb{N}$ and an \textit{edge weight} function $m:V\times V\rightarrow \mathbb{N}[q]$ such that $h(v)=h(u)+1$ if $(u,v)\in E$. Meanwhile, $(u,v)\in E$ if and only if $m(u,v)\neq 0$ and it always assumes that there's a single vertex $\emptyset$ of height 0.
\end{definition}
Let $\mathscr{A}=\mathbb{Z}[q]$ and $\mathscr{A}V$ be the free $\mathscr{A}$-module generated by the vertex set $V$. Given a $q$-graded graph $\Ga=(V,E,h,m)$, one can define the up and down operators $U,D:\mathscr{A}V\rightarrow\mathscr{A}V$ by
\[U_\Ga(v)=\sum\limits_{u\in V}m(v,u)u,~D_\Ga(v)=\sum\limits_{u\in V}m(u,v)u\]
and extending by linearity over $\mathscr{A}$. Assuming that $\Ga$ is locally-finite, these operators are well-defined, otherwise one should define them on the completion $\widehat{\mathscr{A}V}$. A  pair of $q$-graded graphs
$(\Ga,\Ga')$ with the same vertex set $V$ and height function $h$ is called \textit{dual} with \textit{differential coefficient} $r\in\mathscr{A}$ if
\begin{equation}\label{du}
D_{\Ga'}U_\Ga-qU_\Ga D_{\Ga'}=r\us{id}.
\end{equation}
When $q=1$, self-dual graded graphs ($\Ga=\Ga'$) with differential coefficient 1 are the usual differential posets \cite{Sta}.
Define the canonical pairing on $\mathscr{A}V$ such that $\lan u,v\ran=\de_{u,v},~u,v\in V$. For a graded graph $\Ga$, let $f_\Ga^v$ denote the \textit{weight generating function} of paths in $\Ga$ from $\emptyset$ to the vertex $v$ defined by $f_\Ga^v=\lan U_\Ga^n\emptyset,v\ran,~n=h(v)$. From \cite[Theorem 4]{Lam}, we know that there's an identity
\[\sum\limits_{v:h(v)=n}f_\Ga^vf_{\Ga'}^v=r^n(n)_q!,\]
where $(n)_q=1+q+\cdots+q^{n-1}$ and $(n)_q!=(1)_q\cdots(n)_q$ for $n\in\mathbb{N}$. Also for any path $c:v_1\rightarrow v_2\rightarrow\cdots\rightarrow v_n$, define its \textit{weight} as the accumulation of edge weights at each step: $w_\Ga(c)=\prod\limits_{i=1}^{n-1}m(v_i,v_{i+1})$.

Now we introduce the construction of $q$-dual graded graphs corresponding to a pair of graded dual $q$-Hopf algebras. Let $H_\bullet=\oplus_{n\geq0}H_n$ and $H^\bullet=\oplus_{n\geq0}H^n$ be a pair of graded dual $q$-Hopf algebras over $\mathscr{A}$ with respect to the pairing $\lan\cdot,\cdot\ran:H_\bullet\times H^\bullet\rightarrow \mathscr{A}$ and $\us{dim}(H_0)=\us{dim}(H^0)=1$ such that elements in $H_1$ and $H^1$ are all primitive. Meanwhile there exist dual sets of homogeneous free $\mathscr{A}$-module generators $\{p_\la\in H_\bullet\}_{\la\in\La}$ and $\{s_\la\in H^\bullet\}_{\la\in\La}$ such that all the structure constants lie in $\mathbb{N}[q]$.

Fixed nonzero $\al\in H_1$ and $\be\in H^1$, one can define a $q$-graded graph $\Ga(\be)=(V,E,h,m)$ where $V=\{s_\la\in H^\bullet\}_{\la\in\La}$ and the height function $h$ is defined by $h(s_\la)=\us{deg}(s_\la)$. The edge weight function $m$ is defined by
\[m(s_\la,s_\mu)=\lan p_\mu,s_\la\be\ran=\lan \De(p_\mu),s_\la\ot\be\ran,\]
which determines $E$. Similarly, one can define a $q$-graded graph $\Ga'(\al)=(V',E',h',m')$ where $V'=V,~h'=h$ and
\[m'(s_\la,s_\mu)=\lan p_\la\al,s_\mu\ran=\lan p_\la\ot\al,\De(s_\mu)\ran.\]
Note that one can also define the edge weight functions $m,m'$ by left multiplication of $\al,\be$ respectively. Now one can easily check the following result:
\begin{theorem}[{\cite{BLL}}]\label{qgra}
The graded graphs $\Ga(\be)$ and $\Ga'(\al)$ form a pair of $q$-dual graded graphs with differential coefficient $\lan\al,\be\ran$.
\end{theorem}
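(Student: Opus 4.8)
The plan is to recognize the two operators $U_{\Ga(\be)}$ and $D_{\Ga'(\al)}$ as genuine algebraic operations on $H^\bullet$, and then to verify the relation (\ref{du}) by pairing it against the basis $\{p_\la\}$, letting the primitivity of $\al$ and the graded $q$-Hopf duality do the work. First I would identify the operators. Since $\{p_\mu\}$ and $\{s_\mu\}$ are dual bases, every $x\in H^\bullet$ satisfies $\sum_\mu\lan p_\mu,x\ran s_\mu=x$; applying this to $x=s_\la\be$ gives $U_{\Ga(\be)}(s_\la)=\sum_\mu\lan p_\mu,s_\la\be\ran s_\mu=s_\la\be$, so $U_{\Ga(\be)}$ is right multiplication by $\be$ in $H^\bullet$. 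Dually, from $\lan p_\nu,D_{\Ga'(\al)}(s)\ran=\lan p_\nu\al,s\ran$ for every $\nu$, the operator $D_{\Ga'(\al)}$ is precisely the adjoint of right multiplication by $\al$ on $H_\bullet$ with respect to $\lan\cdot,\cdot\ran$. The height shifts are automatic because $\al,\be$ are homogeneous of degree $1$, and the values of $m,m'$ lie in $\mathbb{N}[q]$ by the hypothesis on the structure constants, so $\Ga(\be)$ and $\Ga'(\al)$ are indeed $q$-graded graphs.

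Next I would test the identity $D_{\Ga'}U_\Ga-qU_\Ga D_{\Ga'}=\lan\al,\be\ran\,\us{id}$ by pairing both sides with an arbitrary $p\in H_\bullet$ against $s\in H^\bullet$. Using the two identifications,
\[\lan p,D_{\Ga'}U_\Ga(s)\ran=\lan p\al,s\be\ran,\qquad \lan p,U_\Ga D_{\Ga'}(s)\ran=\lan p,D_{\Ga'}(s)\be\ran=\sum_{(p)}\lan p_{(1)}\al,s\ran\lan p_{(2)},\be\ran,\]
where the last equality uses that the coproduct of $H_\bullet$ is dual to the product of $H^\bullet$ together with the definition of $D_{\Ga'}$. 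The heart of the argument is the expansion $\lan p\al,s\be\ran=\lan\De(p\al),s\ot\be\ran$: applying the $q$-twisted coproduct-of-a-product rule (cf. (\ref{rule})), $\De(p\al)=\sum q^{|p_{(2)}||\al_{(1)}|}(p_{(1)}\al_{(1)})\ot(p_{(2)}\al_{(2)})$, together with primitivity $\De(\al)=\al\ot1+1\ot\al$, gives $\De(p\al)=\sum_{(p)}\bigl(q^{|p_{(2)}|}(p_{(1)}\al)\ot p_{(2)}+p_{(1)}\ot(p_{(2)}\al)\bigr)$.

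Pairing this against $s\ot\be$, the factor $\lan p_{(2)},\be\ran$ in the first sum forces $|p_{(2)}|=1$, so $q^{|p_{(2)}|}=q$ and that sum equals exactly $q\lan p,U_\Ga D_{\Ga'}(s)\ran$, which cancels the $qU_\Ga D_{\Ga'}$ term; in the second sum the factor $\lan p_{(2)}\al,\be\ran$ forces $|p_{(2)}|=0$, and since $\us{dim}(H_0)=1$ the counit contraction $\sum_{|p_{(2)}|=0}p_{(1)}\ot p_{(2)}=p\ot 1$ collapses it to $\lan\al,\be\ran\lan p,s\ran$. Hence $\lan p,(D_{\Ga'}U_\Ga-qU_\Ga D_{\Ga'})(s)\ran=\lan\al,\be\ran\lan p,s\ran$ for all $p$, and nondegeneracy of the pairing yields (\ref{du}) with $r=\lan\al,\be\ran$.

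The main obstacle is the bookkeeping of the $q$-powers: one must check that the twisting exponent produced by the coproduct-of-a-product rule evaluates to exactly $q$ on the only surviving terms of the first sum (those with $p_{(2)}\in H_1$), since it is this matching that forces the clean cancellation against $qU_\Ga D_{\Ga'}$. This depends on fixing a consistent convention for the induced pairing on tensor products, so that both $\lan\De(p\al),s\ot\be\ran$ and $\lan\De(p),D_{\Ga'}(s)\ot\be\ran$ carry no hidden extra $q$-factors; once the convention is pinned down, the degree-$1$ homogeneity of $\be$ and the primitivity of $\al$ supply everything else. I expect no new difficulty in the symmetric left-multiplication version of $m,m'$, where the same computation runs with the roles of $\al$ and $\be$ (and hence the primitivity of $\be$) interchanged.
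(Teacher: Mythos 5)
Your proposal is correct. The paper itself gives no proof of this theorem (it says ``one can easily check'' and defers to \cite{BLL}), and your argument --- identifying $U_{\Ga(\be)}$ with right multiplication by $\be$ and $D_{\Ga'(\al)}$ with the adjoint of right multiplication by $\al$, then expanding $\De(p\al)$ via the $q$-twisted product rule and primitivity of $\al$, with the degree constraints $|p_{(2)}|=1$ and $|p_{(2)}|=0$ isolating the $qU_\Ga D_{\Ga'}$ term and the $\lan\al,\be\ran\,\us{id}$ term respectively --- is exactly the standard computation used in that reference.
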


\begin{example}
Let's first consider the case of $\ms{OSym}$. That's the graded $q$-Hopf algebra $\ms{Sym}_q$ specializing $q$ to be $-1$. Now $\ms{OSym}$ is self-dual with respect to the bilinear form $(\cdot,\cdot)$, with the dual bases $\{s_\la\}_{\la\in\mathcal {P}}$ and $\{(-1)^{\la^T\choose2}s_\la\}_{\la\in\mathcal {P}}$. Choosing the left multiplication of the primitive elements $\al=\be=s_{(1)}$, we have
\[\begin{split}
m(s_\la,s_\mu)&=((-1)^{\mu^T\choose2}s_\mu,s_{(1)}s_\la)=(-1)^{{\la^T\choose2}+{\mu^T\choose2}}
\us{sign}(T_\la)\us{sign}(T_\mu)oc_{\la,(1)}^\mu\\
&=\begin{cases}
(-1)^{\left|\tfrac{i}{\la}\right|+|\la|+\la_i+i-1},& \mbox{if $\mu$ has exactly one more box than $\la$ in the $i$th row},\\
0,&\mbox{otherwise},
\end{cases}
\end{split}
\]
\[\begin{split}
m'(s_\la,s_\mu)&=((-1)^{\la^T\choose2}s_{(1)}s_\la ,s_\mu)=\us{sign}(T_\la)\us{sign}(T_\mu)oc_{\la,(1)}^\mu,\\
&=\begin{cases}
(-1)^{\left|\tfrac{i}{\la}\right|+|\la|+\la_i},& \mbox{if $\mu$ has exactly one more box than $\la$ in the $i$th row},\\
0,&\mbox{otherwise},
\end{cases}
\end{split}
\]
where we use the left-sided version of odd Pieri rule (\ref{rp1}),  (\ref{rp2}) and the identities ${\mu^T\choose2}-{\la^T\choose2}=i-1,~\us{sign}(T_\la)\us{sign}(T_\mu)=(-1)^{|\la|+\la_i}$ and $oc_{\la,(1)}^\mu=(-1)^{\left|\tfrac{i}{\la}\right|}$, when $\mu$ has exactly one more box than $\la$ in the $i$th row. One should note that $(-1)^{\left|\tfrac{i}{\la}\right|+|\la|+\la_i}=(-1)^{\la_1+\cdots+\la_{i-1}}$.

%Therefore, we have the up and down operators as follows,
%\[\begin{array}{l}
%U_{\Ga}(s_\la)=\sum\limits_\mu c^\mu_{\la,(1)}s_\mu,
%~D_{\Ga}(s_\la)=\sum\limits_\mu c^\la_{\mu,(1)}s_\mu,\\
%U_{\Ga'}(s_\la)=\sum\limits_\mu (-1)^{{\la^T\choose2}+{\mu^T\choose2}}c^\mu_{\la,(1)}s_\mu,
%~D_{\Ga'}(s_\la)=\sum\limits_\mu (-1)^{{\la^T\choose2}+{\mu^T\choose2}}c^\la_{\mu,(1)}s_\mu.
%\end{array}
%\]

Now we have the following signed dual graded graphs with differential coefficient 1:
\[\xy 0;/r.35pc/:
(0,1.5)*{};(0,4)*{}**\dir{-};+(1,1)*{\stt{+}};
(-1,5.5)*{};(-4,9)*{}**\dir{-};+(-2.5,1.3)*{\stt{+}};
(1,5.5)*{};(4,9)*{}**\dir{-};+(2.5,1.3)*{\stt{+}};
(-6,11.5)*{};(-8,14)*{}**\dir{-};+(-2.5,1)*{\stt{+}};
(6,11.5)*{};(8,14)*{}**\dir{-};+(2.5,1)*{\stt{+}};
(-4,11.5)*{};(-2,14)*{}**\dir{-};+(2.5,1)*{\stt{+}};
(4,11.5)*{};(2,14)*{}**\dir{-};+(-2.5,1)*{\stt{-}};
(-11,16.5)*{};(-13,19)*{}**\dir{-};+(-2.5,1)*{\stt{+}};
(11,16.5)*{};(13,19)*{}**\dir{-};+(2.5,1)*{\stt{+}};
(-1,16.5)*{};(-5,19)*{}**\dir{-};+(-3.5,1)*{\stt{-}};
(1,16.5)*{};(5,19)*{}**\dir{-};+(3.5,1)*{\stt{+}};
(0,16.5)*{};(0,18.5)*{}**\dir{-};+(1,1.3)*{\stt{-}};
(-9,16.5)*{};(-8,18)*{}**\dir{-};+(1.7,1)*{\stt{+}};
(9,16.5)*{};(8,19)*{}**\dir{-};+(-1.5,1)*{\stt{+}};
(0,0)*{\stt{\emptyset}};
(0,5)*{\xy 0;/r.07pc/:
(0,0)*{};(5,0)*{}**\dir{-};
(0,-5)*{};(5,-5)*{}**\dir{-};
(0,0)*{};(0,-5)*{}**\dir{-};
(5,0)*{};(5,-5)*{}**\dir{-};
\endxy};
(-5,10)*{\xy 0;/r.07pc/:
(0,0)*{};(5,0)*{}**\dir{-};
(0,-5)*{};(5,-5)*{}**\dir{-};
(0,-10)*{};(5,-10)*{}**\dir{-};
(0,0)*{};(0,-10)*{}**\dir{-};
(5,0)*{};(5,-10)*{}**\dir{-};
\endxy};
(5,10)*{\xy 0;/r.07pc/:
(0,0)*{};(10,0)*{}**\dir{-};
(0,-5)*{};(10,-5)*{}**\dir{-};
(0,0)*{};(0,-5)*{}**\dir{-};
(5,0)*{};(5,-5)*{}**\dir{-};
(10,0)*{};(10,-5)*{}**\dir{-};
\endxy};
(-10,15)*{\xy 0;/r.07pc/:
(0,0)*{};(5,0)*{}**\dir{-};
(0,-5)*{};(5,-5)*{}**\dir{-};
(0,-10)*{};(5,-10)*{}**\dir{-};
(0,-15)*{};(5,-15)*{}**\dir{-};
(0,0)*{};(0,-15)*{}**\dir{-};
(5,0)*{};(5,-15)*{}**\dir{-};
\endxy};
(0,15)*{\xy 0;/r.07pc/:
(0,0)*{};(10,0)*{}**\dir{-};
(0,-5)*{};(10,-5)*{}**\dir{-};
(0,-10)*{};(5,-10)*{}**\dir{-};
(0,0)*{};(0,-10)*{}**\dir{-};
(5,0)*{};(5,-10)*{}**\dir{-};
(10,0)*{};(10,-5)*{}**\dir{-};
\endxy};
(10,15)*{\xy 0;/r.07pc/:
(0,0)*{};(15,0)*{}**\dir{-};
(0,-5)*{};(15,-5)*{}**\dir{-};
(0,0)*{};(0,-5)*{}**\dir{-};
(5,0)*{};(5,-5)*{}**\dir{-};
(10,0)*{};(10,-5)*{}**\dir{-};
(15,0)*{};(15,-5)*{}**\dir{-};
\endxy};
(-15,20)*{\xy 0;/r.07pc/:
(0,0)*{};(5,0)*{}**\dir{-};
(0,-5)*{};(5,-5)*{}**\dir{-};
(0,-10)*{};(5,-10)*{}**\dir{-};
(0,-15)*{};(5,-15)*{}**\dir{-};
(0,-20)*{};(5,-20)*{}**\dir{-};
(0,0)*{};(0,-20)*{}**\dir{-};
(5,0)*{};(5,-20)*{}**\dir{-};
\endxy};
(-7.5,20)*{\xy 0;/r.07pc/:
(0,0)*{};(10,0)*{}**\dir{-};
(0,-5)*{};(10,-5)*{}**\dir{-};
(0,-10)*{};(5,-10)*{}**\dir{-};
(0,-15)*{};(5,-15)*{}**\dir{-};
(0,0)*{};(0,-15)*{}**\dir{-};
(5,0)*{};(5,-15)*{}**\dir{-};
(10,0)*{};(10,-5)*{}**\dir{-};
\endxy};
(0,20)*{\xy 0;/r.07pc/:
(0,0)*{};(10,0)*{}**\dir{-};
(0,-5)*{};(10,-5)*{}**\dir{-};
(0,-10)*{};(10,-10)*{}**\dir{-};
(0,0)*{};(0,-10)*{}**\dir{-};
(5,0)*{};(5,-10)*{}**\dir{-};
(10,0)*{};(10,-10)*{}**\dir{-};
\endxy};
(7.5,20)*{\xy 0;/r.07pc/:
(0,0)*{};(15,0)*{}**\dir{-};
(0,-5)*{};(15,-5)*{}**\dir{-};
(0,-10)*{};(5,-10)*{}**\dir{-};
(0,0)*{};(0,-10)*{}**\dir{-};
(5,0)*{};(5,-10)*{}**\dir{-};
(10,0)*{};(10,-5)*{}**\dir{-};
(15,0)*{};(15,-5)*{}**\dir{-};
\endxy};
(15,20)*{\xy 0;/r.07pc/:
(0,0)*{};(20,0)*{}**\dir{-};
(0,-5)*{};(20,-5)*{}**\dir{-};
(0,0)*{};(0,-5)*{}**\dir{-};
(5,0)*{};(5,-5)*{}**\dir{-};
(10,0)*{};(10,-5)*{}**\dir{-};
(15,0)*{};(15,-5)*{}**\dir{-};
(20,0)*{};(20,-5)*{}**\dir{-};
\endxy};
(0,-3)*{\stt{\Ga(\al)}};
\endxy\quad\xy 0;/r.35pc/:
(0,1.5)*{};(0,4)*{}**\dir{-};+(1,1)*{\stt{+}};
(-1,5.5)*{};(-4,9)*{}**\dir{-};+(-2.5,1.3)*{\stt{-}};
(1,5.5)*{};(4,9)*{}**\dir{-};+(2.5,1.3)*{\stt{+}};
(-6,11.5)*{};(-8,14)*{}**\dir{-};+(-2.5,1)*{\stt{+}};
(6,11.5)*{};(8,14)*{}**\dir{-};+(2.5,1)*{\stt{+}};
(-4,11.5)*{};(-2,14)*{}**\dir{-};+(2.5,1)*{\stt{+}};
(4,11.5)*{};(2,14)*{}**\dir{-};+(-2.5,1)*{\stt{+}};
(-11,16.5)*{};(-13,19)*{}**\dir{-};+(-2.5,1)*{\stt{-}};
(11,16.5)*{};(13,19)*{}**\dir{-};+(2.5,1)*{\stt{+}};
(-1,16.5)*{};(-5,19)*{}**\dir{-};+(-3.5,1)*{\stt{-}};
(1,16.5)*{};(5,19)*{}**\dir{-};+(3.5,1)*{\stt{+}};
(0,16.5)*{};(0,18.5)*{}**\dir{-};+(1,1.3)*{\stt{+}};
(-9,16.5)*{};(-8,18)*{}**\dir{-};+(1.7,1)*{\stt{+}};
(9,16.5)*{};(8,19)*{}**\dir{-};+(-1.5,1)*{\stt{-}};
(0,0)*{\stt{\emptyset}};
(0,5)*{\xy 0;/r.07pc/:
(0,0)*{};(5,0)*{}**\dir{-};
(0,-5)*{};(5,-5)*{}**\dir{-};
(0,0)*{};(0,-5)*{}**\dir{-};
(5,0)*{};(5,-5)*{}**\dir{-};
\endxy};
(-5,10)*{\xy 0;/r.07pc/:
(0,0)*{};(5,0)*{}**\dir{-};
(0,-5)*{};(5,-5)*{}**\dir{-};
(0,-10)*{};(5,-10)*{}**\dir{-};
(0,0)*{};(0,-10)*{}**\dir{-};
(5,0)*{};(5,-10)*{}**\dir{-};
\endxy};
(5,10)*{\xy 0;/r.07pc/:
(0,0)*{};(10,0)*{}**\dir{-};
(0,-5)*{};(10,-5)*{}**\dir{-};
(0,0)*{};(0,-5)*{}**\dir{-};
(5,0)*{};(5,-5)*{}**\dir{-};
(10,0)*{};(10,-5)*{}**\dir{-};
\endxy};
(-10,15)*{\xy 0;/r.07pc/:
(0,0)*{};(5,0)*{}**\dir{-};
(0,-5)*{};(5,-5)*{}**\dir{-};
(0,-10)*{};(5,-10)*{}**\dir{-};
(0,-15)*{};(5,-15)*{}**\dir{-};
(0,0)*{};(0,-15)*{}**\dir{-};
(5,0)*{};(5,-15)*{}**\dir{-};
\endxy};
(0,15)*{\xy 0;/r.07pc/:
(0,0)*{};(10,0)*{}**\dir{-};
(0,-5)*{};(10,-5)*{}**\dir{-};
(0,-10)*{};(5,-10)*{}**\dir{-};
(0,0)*{};(0,-10)*{}**\dir{-};
(5,0)*{};(5,-10)*{}**\dir{-};
(10,0)*{};(10,-5)*{}**\dir{-};
\endxy};
(10,15)*{\xy 0;/r.07pc/:
(0,0)*{};(15,0)*{}**\dir{-};
(0,-5)*{};(15,-5)*{}**\dir{-};
(0,0)*{};(0,-5)*{}**\dir{-};
(5,0)*{};(5,-5)*{}**\dir{-};
(10,0)*{};(10,-5)*{}**\dir{-};
(15,0)*{};(15,-5)*{}**\dir{-};
\endxy};
(-15,20)*{\xy 0;/r.07pc/:
(0,0)*{};(5,0)*{}**\dir{-};
(0,-5)*{};(5,-5)*{}**\dir{-};
(0,-10)*{};(5,-10)*{}**\dir{-};
(0,-15)*{};(5,-15)*{}**\dir{-};
(0,-20)*{};(5,-20)*{}**\dir{-};
(0,0)*{};(0,-20)*{}**\dir{-};
(5,0)*{};(5,-20)*{}**\dir{-};
\endxy};
(-7.5,20)*{\xy 0;/r.07pc/:
(0,0)*{};(10,0)*{}**\dir{-};
(0,-5)*{};(10,-5)*{}**\dir{-};
(0,-10)*{};(5,-10)*{}**\dir{-};
(0,-15)*{};(5,-15)*{}**\dir{-};
(0,0)*{};(0,-15)*{}**\dir{-};
(5,0)*{};(5,-15)*{}**\dir{-};
(10,0)*{};(10,-5)*{}**\dir{-};
\endxy};
(0,20)*{\xy 0;/r.07pc/:
(0,0)*{};(10,0)*{}**\dir{-};
(0,-5)*{};(10,-5)*{}**\dir{-};
(0,-10)*{};(10,-10)*{}**\dir{-};
(0,0)*{};(0,-10)*{}**\dir{-};
(5,0)*{};(5,-10)*{}**\dir{-};
(10,0)*{};(10,-10)*{}**\dir{-};
\endxy};
(7.5,20)*{\xy 0;/r.07pc/:
(0,0)*{};(15,0)*{}**\dir{-};
(0,-5)*{};(15,-5)*{}**\dir{-};
(0,-10)*{};(5,-10)*{}**\dir{-};
(0,0)*{};(0,-10)*{}**\dir{-};
(5,0)*{};(5,-10)*{}**\dir{-};
(10,0)*{};(10,-5)*{}**\dir{-};
(15,0)*{};(15,-5)*{}**\dir{-};
\endxy};
(15,20)*{\xy 0;/r.07pc/:
(0,0)*{};(20,0)*{}**\dir{-};
(0,-5)*{};(20,-5)*{}**\dir{-};
(0,0)*{};(0,-5)*{}**\dir{-};
(5,0)*{};(5,-5)*{}**\dir{-};
(10,0)*{};(10,-5)*{}**\dir{-};
(15,0)*{};(15,-5)*{}**\dir{-};
(20,0)*{};(20,-5)*{}**\dir{-};
\endxy};
(0,-3)*{\stt{\Ga'(\be)}};
\endxy\]
where we use Young diagrams to represent the vertex set $\{s_\la\}_{\la\in\mathcal {P}}$ and signs for the edge weights.
\begin{remark}
(1) In \cite{Lam1}, Lam studied two kinds of signed differential posets ($\al$ type and $\be$ type), which have nice enumeration properties. Meanwhile, he constructed signed Young's lattices as the fundamental examples. Now taking $v:v(s_\la)=(-1)^{\la\choose2},~\la\in\mathcal {P}$ as the labeling of the vertex set as in \cite{Lam1}, the graph $\Ga'$ (resp. $\Ga$) conjugates to the $\al$ (resp. $\be$)-signed Young's lattice due to Lam.

(2) Note that any saturated chain (i.e. path) in the Young's lattice from $\emptyset$ to $\la$ can be naturally identified with some $T\in\us{SYT}(\la)$.
Now for the graphs $\Ga_(\al),\Ga'(\be)$, we can also figure out the weights of their paths. In fact,
if the path $c$ corresponds to $T\in\us{SYT}(\la)$, then we have $w_\Ga(c)=\us{sign}(T)(-1)^{\la^T\choose2},~w_{\Ga'}(c)=\us{sign}(T)$ by noting that at each step $m'(s_\la,s_\mu)$ is just the sign counting the number of boxes strictly above the new one, while one more multiple $(-1)^{i-1}$ for $m(s_\la,s_\mu)$. Hence, we have
\begin{equation}\label{ide}
 \sum\limits_{\la\vdash n}f_\Ga^\la f_{\Ga'}^\la=\sum\limits_{\la\vdash n}(-1)^{\la^T\choose2}\lb\sum\limits_{T\in\us{\s{SYT}}(\la)}\us{sign}(T)\rb^2=\de_{n,0}.
\end{equation}
\end{remark}
\end{example}

\begin{example}
From \cite[\S7.5]{BLL}, we know that the $q$-dual graded graphs corresponding to the dual pair $\ms{QSym}_q$ and $\ms{NSym}$, when choosing the dual bases $\{F_\al\}_{\al\in C}$ and $\{R_\al\}_{\al\in C}$, are the $q$-BinWord graphs and the lifted binary trees. Now for $\ms{OQSym}$ and $\ms{NSym}$, we consider the dual bases $\{\mathscr{S}_\al\}_{\al\in C}$ and $\{\mathscr{S}^*_\al\}_{\al\in C}$. Applying the Pieri rules \ref{pie1}, \ref{pie2} to the primitive elements $\al=\mathscr{S}_{(1)},~\be=\mathscr{S}^*_{(1)}$, we have
\[m_\mathcal {L}(\mathscr{S}_\ga,\mathscr{S}_\eta)=\lan\mathscr{S}_\eta,\mathscr{S}^*_\ga\mathscr{S}^*_{(1)}\ran
=oc_{\tilde{\ga}(1)}^{\tilde{\eta}}=
(-1)^{\left|\tfrac{i}{\tilde{\ga}}\right|},\]
if $\ga\lessdot\eta$ and the box in $\tilde{\eta}/\tilde{\ga}$ is in the $i$th row of $\tilde{\eta}$, and 0 otherwise.
\[m_\mathcal {P}(\mathscr{S}_\ga,\mathscr{S}_\eta)=\lan\mathscr{S}_\ga\mathscr{S}_{(1)},\mathscr{S}^*_\eta\ran
=(-1)^{{\tilde{\ga}^T\choose2}+{\tilde{\eta}^T\choose2}}oc_{\tilde{\ga}(1)}^{\tilde{\eta}}=(-1)^{\left|\tfrac{i}{\tilde{\eta}}\right|+i-1},\]
if $\exists s\in\mathbb{Z}^+$ s.t. $\us{rem}_s(\eta)=\ga$ and the box in $\tilde{\eta}/\tilde{\ga}$ is in the $i$th row of $\tilde{\eta}$, and 0 otherwise.

Now representing vertices by compositions, we have the following signed dual graded graphs with differential coefficient 1:
\[\xy 0;/r.35pc/:
(0,5.5)*{};(0,8)*{}**\dir{-};+(1,1)*{\stt{+}};
(-1,9.5)*{};(-6.5,13)*{}**\dir{-};+(-3.5,1.3)*{\stt{+}};
(1,9.5)*{};(6.5,13)*{}**\dir{-};+(3.5,1.2)*{\stt{+}};
(-8.5,15)*{};(-14,19)*{}**\dir{-};+(-3,1)*{\stt{+}};
(8.5,15)*{};(14,19.5)*{}**\dir{-};+(3,1.5)*{\stt{+}};
(-6.5,15.5)*{};(-5.5,19)*{}**\dir{-};+(1.5,1)*{\stt{-}};
(6.5,15.5)*{};(5.5,19)*{}**\dir{-};+(-1.5,1)*{\stt{+}};
(-16.5,22.5)*{};(-20,25.5)*{}**\dir{-};+(-2.5,1)*{\stt{+}};
(16.5,22.5)*{};(20,26)*{}**\dir{-};+(2.5,1)*{\stt{+}};
(15,22.2)*{};(15,26)*{}**\dir{-};+(1,1.2)*{\stt{+}};
(-15,23)*{};(-15,26)*{}**\dir{-};+(1,1.2)*{\stt{+}};
(-5,22.8)*{};(-4,26)*{}**\dir{-};+(1,0.8)*{\stt{-}};
(-6,22.8)*{};(-9,26)*{}**\dir{-};+(-0.2,1.5)*{\stt{+}};
(-4,22.2)*{};(8,26.2)*{}**\dir{-};+(2,-0.2)*{\stt{+}};
(4,22.8)*{};(3,26)*{}**\dir{-};+(0.5,1)*{\stt{+}};
(5,22.8)*{};(9,26)*{}**\dir{-};+(2.2,1)*{\stt{+}};
(6,22.5)*{};(14,26)*{}**\dir{-};+(4.2,1.2)*{\stt{-}};
(0,4)*{\stt{\emptyset}};
(0,9)*{\xy 0;/r.07pc/:
(0,0)*{};(5,0)*{}**\dir{-};
(0,-5)*{};(5,-5)*{}**\dir{-};
(0,0)*{};(0,-5)*{}**\dir{-};
(5,0)*{};(5,-5)*{}**\dir{-};
\endxy};
(-7.5,14)*{\xy 0;/r.07pc/:
(0,0)*{};(5,0)*{}**\dir{-};
(0,-5)*{};(5,-5)*{}**\dir{-};
(0,-10)*{};(5,-10)*{}**\dir{-};
(0,0)*{};(0,-10)*{}**\dir{-};
(5,0)*{};(5,-10)*{}**\dir{-};
\endxy};
(7.5,14)*{\xy 0;/r.07pc/:
(0,0)*{};(10,0)*{}**\dir{-};
(0,-5)*{};(10,-5)*{}**\dir{-};
(0,0)*{};(0,-5)*{}**\dir{-};
(5,0)*{};(5,-5)*{}**\dir{-};
(10,0)*{};(10,-5)*{}**\dir{-};
\endxy};
(-15,21)*{\xy 0;/r.07pc/:
(0,0)*{};(5,0)*{}**\dir{-};
(0,-5)*{};(5,-5)*{}**\dir{-};
(0,-10)*{};(5,-10)*{}**\dir{-};
(0,-15)*{};(5,-15)*{}**\dir{-};
(0,0)*{};(0,-15)*{}**\dir{-};
(5,0)*{};(5,-15)*{}**\dir{-};
\endxy};
(-5,21)*{\xy 0;/r.07pc/:
(0,0)*{};(5,0)*{}**\dir{-};
(0,-5)*{};(10,-5)*{}**\dir{-};
(0,-10)*{};(10,-10)*{}**\dir{-};
(0,0)*{};(0,-10)*{}**\dir{-};
(5,0)*{};(5,-10)*{}**\dir{-};
(10,-5)*{};(10,-10)*{}**\dir{-};
\endxy};
(5,21)*{\xy 0;/r.07pc/:
(0,0)*{};(10,0)*{}**\dir{-};
(0,-5)*{};(10,-5)*{}**\dir{-};
(0,-10)*{};(5,-10)*{}**\dir{-};
(0,0)*{};(0,-10)*{}**\dir{-};
(5,0)*{};(5,-10)*{}**\dir{-};
(10,0)*{};(10,-5)*{}**\dir{-};
\endxy};
(15,21)*{\xy 0;/r.07pc/:
(0,0)*{};(15,0)*{}**\dir{-};
(0,-5)*{};(15,-5)*{}**\dir{-};
(0,0)*{};(0,-5)*{}**\dir{-};
(5,0)*{};(5,-5)*{}**\dir{-};
(10,0)*{};(10,-5)*{}**\dir{-};
(15,0)*{};(15,-5)*{}**\dir{-};
\endxy};
(-21,28)*{\xy 0;/r.07pc/:
(0,0)*{};(5,0)*{}**\dir{-};
(0,-5)*{};(5,-5)*{}**\dir{-};
(0,-10)*{};(5,-10)*{}**\dir{-};
(0,-15)*{};(5,-15)*{}**\dir{-};
(0,-20)*{};(5,-20)*{}**\dir{-};
(0,0)*{};(0,-20)*{}**\dir{-};
(5,0)*{};(5,-20)*{}**\dir{-};
\endxy};
(-15,28)*{\xy 0;/r.07pc/:
(0,0)*{};(5,0)*{}**\dir{-};
(0,-5)*{};(5,-5)*{}**\dir{-};
(0,-10)*{};(10,-10)*{}**\dir{-};
(0,-15)*{};(10,-15)*{}**\dir{-};
(0,0)*{};(0,-15)*{}**\dir{-};
(5,0)*{};(5,-15)*{}**\dir{-};
(10,-10)*{};(10,-15)*{}**\dir{-};
\endxy};
(-9,28)*{\xy 0;/r.07pc/:
(0,0)*{};(5,0)*{}**\dir{-};
(0,-5)*{};(10,-5)*{}**\dir{-};
(0,-10)*{};(10,-10)*{}**\dir{-};
(0,-15)*{};(5,-15)*{}**\dir{-};
(0,0)*{};(0,-15)*{}**\dir{-};
(5,0)*{};(5,-15)*{}**\dir{-};
(10,-5)*{};(10,-10)*{}**\dir{-};
\endxy};
(3,28)*{\xy 0;/r.07pc/:
(0,0)*{};(10,0)*{}**\dir{-};
(0,-5)*{};(10,-5)*{}**\dir{-};
(0,-10)*{};(5,-10)*{}**\dir{-};
(0,-15)*{};(5,-15)*{}**\dir{-};
(0,0)*{};(0,-15)*{}**\dir{-};
(5,0)*{};(5,-15)*{}**\dir{-};
(10,0)*{};(10,-5)*{}**\dir{-};
\endxy};
(9,28)*{\xy 0;/r.07pc/:
(0,0)*{};(10,0)*{}**\dir{-};
(0,-5)*{};(10,-5)*{}**\dir{-};
(0,-10)*{};(10,-10)*{}**\dir{-};
(0,0)*{};(0,-10)*{}**\dir{-};
(5,0)*{};(5,-10)*{}**\dir{-};
(10,0)*{};(10,-10)*{}**\dir{-};
\endxy};
(-3,28)*{\xy 0;/r.07pc/:
(0,0)*{};(5,0)*{}**\dir{-};
(0,-5)*{};(15,-5)*{}**\dir{-};
(0,-10)*{};(15,-10)*{}**\dir{-};
(0,0)*{};(0,-10)*{}**\dir{-};
(5,0)*{};(5,-10)*{}**\dir{-};
(10,-5)*{};(10,-10)*{}**\dir{-};
(15,-5)*{};(15,-10)*{}**\dir{-};
\endxy};
(15,28)*{\xy 0;/r.07pc/:
(0,0)*{};(15,0)*{}**\dir{-};
(0,-5)*{};(15,-5)*{}**\dir{-};
(0,-10)*{};(5,-10)*{}**\dir{-};
(0,0)*{};(0,-10)*{}**\dir{-};
(5,0)*{};(5,-10)*{}**\dir{-};
(10,0)*{};(10,-5)*{}**\dir{-};
(15,0)*{};(15,-5)*{}**\dir{-};
\endxy};
(21,28)*{\xy 0;/r.07pc/:
(0,0)*{};(20,0)*{}**\dir{-};
(0,-5)*{};(20,-5)*{}**\dir{-};
(0,0)*{};(0,-5)*{}**\dir{-};
(5,0)*{};(5,-5)*{}**\dir{-};
(10,0)*{};(10,-5)*{}**\dir{-};
(15,0)*{};(15,-5)*{}**\dir{-};
(20,0)*{};(20,-5)*{}**\dir{-};
\endxy};
(0,1)*{\stt{\mathcal {L}}};
\endxy\quad
\xy 0;/r.35pc/:
(0,5.5)*{};(0,8)*{}**\dir{-};+(1,1)*{\stt{+}};
(-1,9.5)*{};(-6.5,13)*{}**\dir{-};+(-3.5,1.3)*{\stt{-}};
(1,9.5)*{};(6.5,13)*{}**\dir{-};+(3.5,1.2)*{\stt{+}};
(-8.5,15)*{};(-14,19)*{}**\dir{-};+(-3,1)*{\stt{+}};
(8.5,15)*{};(14,19.5)*{}**\dir{-};+(3,1.5)*{\stt{+}};
(-7,15.5)*{};(-5.5,19)*{}**\dir{-};+(-0.5,2)*{\stt{-}};
(7,15.5)*{};(5.5,19)*{}**\dir{-};+(0.5,2)*{\stt{-}};
(-6,15.5)*{};(4,19)*{}**\dir{-};+(3.5,0)*{\stt{-}};
(6,15.5)*{};(-4,19)*{}**\dir{-};+(-3.5,0)*{\stt{-}};
(-16.5,22.5)*{};(-20,25.5)*{}**\dir{-};+(-2.5,1)*{\stt{-}};
(16.5,22.2)*{};(20,26)*{}**\dir{-};+(2.5,1)*{\stt{+}};
(-15,23)*{};(-15,26)*{}**\dir{-};+(-1,1.2)*{\stt{+}};
(15,22.2)*{};(15,26)*{}**\dir{-};+(1,1.2)*{\stt{-}};
(-14,23)*{};(-10,26)*{}**\dir{-};+(0.2,1.2)*{\stt{+}};
(-13.5,22.5)*{};(2,26)*{}**\dir{-};+(2,-0.5)*{\stt{+}};
(13.5,22.2)*{};(-2,26)*{}**\dir{-};+(-2,-0.5)*{\stt{-}};
(-5,22.8)*{};(-4,26)*{}**\dir{-};+(1,0.8)*{\stt{-}};
(5,22.5)*{};(3.5,26)*{}**\dir{-};+(-1.2,0.8)*{\stt{+}};
(-6,22.5)*{};(-14,26)*{}**\dir{-};+(-1.5,-0.2)*{\stt{+}};
(6,22.5)*{};(14,26)*{}**\dir{-};+(1.5,-0.2)*{\stt{-}};
(4,22.2)*{};(-8.5,26)*{}**\dir{-};+(-2,-0.2)*{\stt{+}};
(-4,22.2)*{};(8.5,26)*{}**\dir{-};+(2,-0.2)*{\stt{-}};
(0,4)*{\stt{\emptyset}};
(0,9)*{\xy 0;/r.07pc/:
(0,0)*{};(5,0)*{}**\crv{~*=<.1pt>{.}(2,0)};
(0,-5)*{};(5,-5)*{}**\crv{~*=<.1pt>{.}(2,-5)};
(0,0)*{};(0,-5)*{}**\crv{~*=<.1pt>{.}(0,2)};
(5,0)*{};(5,-5)*{}**\crv{~*=<.1pt>{.}(5,-2)};
\endxy};
(-7.5,14)*{\xy 0;/r.07pc/:
(0,0)*{};(5,0)*{}**\crv{~*=<.1pt>{.}(2,0)};
(0,-5)*{};(5,-5)*{}**\crv{~*=<.1pt>{.}(2,-5)};
(0,-10)*{};(5,-10)*{}**\crv{~*=<.1pt>{.}(2,-10)};
(0,0)*{};(0,-10)*{}**\crv{~*=<.1pt>{.}(0,-2)};
(5,0)*{};(5,-10)*{}**\crv{~*=<.1pt>{.}(5,-2)};
\endxy};
(7.5,14)*{\xy 0;/r.07pc/:
(0,0)*{};(10,0)*{}**\crv{~*=<.1pt>{.}(2,0)};
(0,-5)*{};(10,-5)*{}**\crv{~*=<.1pt>{.}(2,-5)};
(0,0)*{};(0,-5)*{}**\crv{~*=<.1pt>{.}(0,-2)};
(5,0)*{};(5,-5)*{}**\crv{~*=<.1pt>{.}(5,-2)};
(10,0)*{};(10,-5)*{}**\crv{~*=<.1pt>{.}(10,-2)};
\endxy};
(-15,21)*{\xy 0;/r.07pc/:
(0,0)*{};(5,0)*{}**\crv{~*=<.1pt>{.}(2,0)};
(0,-5)*{};(5,-5)*{}**\crv{~*=<.1pt>{.}(2,-5)};
(0,-10)*{};(5,-10)*{}**\crv{~*=<.1pt>{.}(2,-10)};
(0,-15)*{};(5,-15)*{}**\crv{~*=<.1pt>{.}(2,-15)};
(0,0)*{};(0,-15)*{}**\crv{~*=<.1pt>{.}(0,-2)};
(5,0)*{};(5,-15)*{}**\crv{~*=<.1pt>{.}(5,-2)};
\endxy};
(-5,21)*{\xy 0;/r.07pc/:
(0,0)*{};(5,0)*{}**\crv{~*=<.1pt>{.}(2,0)};
(0,-5)*{};(10,-5)*{}**\crv{~*=<.1pt>{.}(2,-5)};
(0,-10)*{};(10,-10)*{}**\crv{~*=<.1pt>{.}(2,-10)};
(0,0)*{};(0,-10)*{}**\crv{~*=<.1pt>{.}(0,-2)};
(5,0)*{};(5,-10)*{}**\crv{~*=<.1pt>{.}(5,-2)};
(10,-5)*{};(10,-10)*{}**\crv{~*=<.1pt>{.}(10,-7)};
\endxy};
(5,21)*{\xy 0;/r.07pc/:
(0,0)*{};(10,0)*{}**\dir{-};
(0,-5)*{};(10,-5)*{}**\dir{-};
(0,-10)*{};(5,-10)*{}**\dir{-};
(0,0)*{};(0,-10)*{}**\dir{-};
(5,0)*{};(5,-10)*{}**\dir{-};
(10,0)*{};(10,-5)*{}**\dir{-};
\endxy};
(15,21)*{\xy 0;/r.07pc/:
(0,0)*{};(15,0)*{}**\crv{~*=<.1pt>{.}(2,0)};
(0,-5)*{};(15,-5)*{}**\crv{~*=<.1pt>{.}(2,-5)};
(0,0)*{};(0,-5)*{}**\crv{~*=<.1pt>{.}(0,-2)};
(5,0)*{};(5,-5)*{}**\crv{~*=<.1pt>{.}(5,-2)};
(10,0)*{};(10,-5)*{}**\crv{~*=<.1pt>{.}(10,-2)};
(15,0)*{};(15,-5)*{}**\crv{~*=<.1pt>{.}(15,-2)};
\endxy};
(-21,28)*{\xy 0;/r.07pc/:
(0,0)*{};(5,0)*{}**\crv{~*=<.1pt>{.}(2,0)};
(0,-5)*{};(5,-5)*{}**\crv{~*=<.1pt>{.}(2,-5)};
(0,-10)*{};(5,-10)*{}**\crv{~*=<.1pt>{.}(2,-10)};
(0,-15)*{};(5,-15)*{}**\crv{~*=<.1pt>{.}(2,-15)};
(0,-20)*{};(5,-20)*{}**\crv{~*=<.1pt>{.}(2,-20)};
(0,0)*{};(0,-20)*{}**\crv{~*=<.1pt>{.}(0,-2)};
(5,0)*{};(5,-20)*{}**\crv{~*=<.1pt>{.}(5,-2)};
\endxy};
(-15,28)*{\xy 0;/r.07pc/:
(0,0)*{};(5,0)*{}**\crv{~*=<.1pt>{.}(2,0)};
(0,-5)*{};(5,-5)*{}**\crv{~*=<.1pt>{.}(2,-5)};
(0,-10)*{};(10,-10)*{}**\crv{~*=<.1pt>{.}(2,-10)};
(0,-15)*{};(10,-15)*{}**\crv{~*=<.1pt>{.}(2,-15)};
(0,0)*{};(0,-15)*{}**\crv{~*=<.1pt>{.}(0,-2)};
(5,0)*{};(5,-15)*{}**\crv{~*=<.1pt>{.}(5,-2)};
(10,-10)*{};(10,-15)*{}**\crv{~*=<.1pt>{.}(10,-12)};
\endxy};
(-9,28)*{\xy 0;/r.07pc/:
(0,0)*{};(5,0)*{}**\dir{-};
(0,-5)*{};(10,-5)*{}**\dir{-};
(0,-10)*{};(10,-10)*{}**\dir{-};
(0,-15)*{};(5,-15)*{}**\dir{-};
(0,0)*{};(0,-15)*{}**\dir{-};
(5,0)*{};(5,-15)*{}**\dir{-};
(10,-5)*{};(10,-10)*{}**\dir{-};
\endxy};
(3,28)*{\xy 0;/r.07pc/:
(0,0)*{};(10,0)*{}**\dir{-};
(0,-5)*{};(10,-5)*{}**\dir{-};
(0,-10)*{};(5,-10)*{}**\dir{-};
(0,-15)*{};(5,-15)*{}**\dir{-};
(0,0)*{};(0,-15)*{}**\dir{-};
(5,0)*{};(5,-15)*{}**\dir{-};
(10,0)*{};(10,-5)*{}**\dir{-};
\endxy};
(9,28)*{\xy 0;/r.07pc/:
(0,0)*{};(10,0)*{}**\crv{~*=<.1pt>{.}(5,0)};
(0,-5)*{};(10,-5)*{}**\crv{~*=<.1pt>{.}(5,-5)};
(0,-10)*{};(10,-10)*{}**\crv{~*=<.1pt>{.}(5,-10)};
(0,0)*{};(0,-10)*{}**\crv{~*=<.1pt>{.}(0,-5)};
(5,0)*{};(5,-10)*{}**\crv{~*=<.1pt>{.}(5,-5)};
(10,0)*{};(10,-10)*{}**\crv{~*=<.1pt>{.}(10,-5)};
\endxy};
(-3,28)*{\xy 0;/r.07pc/:
(0,0)*{};(5,0)*{}**\crv{~*=<.1pt>{.}(2,0)};
(0,-5)*{};(15,-5)*{}**\crv{~*=<.1pt>{.}(2,-5)};
(0,-10)*{};(15,-10)*{}**\crv{~*=<.1pt>{.}(2,-10)};
(0,0)*{};(0,-10)*{}**\crv{~*=<.1pt>{.}(0,-2)};
(5,0)*{};(5,-10)*{}**\crv{~*=<.1pt>{.}(5,-2)};
(10,-5)*{};(10,-10)*{}**\crv{~*=<.1pt>{.}(10,-8)};
(15,-5)*{};(15,-10)*{}**\crv{~*=<.1pt>{.}(15,-8)};
\endxy};
(15,28)*{\xy 0;/r.07pc/:
(0,0)*{};(15,0)*{}**\dir{-};
(0,-5)*{};(15,-5)*{}**\dir{-};
(0,-10)*{};(5,-10)*{}**\dir{-};
(0,0)*{};(0,-10)*{}**\dir{-};
(5,0)*{};(5,-10)*{}**\dir{-};
(10,0)*{};(10,-5)*{}**\dir{-};
(15,0)*{};(15,-5)*{}**\dir{-};
\endxy};
(21,28)*{\xy 0;/r.07pc/:
(0,0)*{};(20,0)*{}**\crv{~*=<.1pt>{.}(10,0)};
(0,-5)*{};(20,-5)*{}**\crv{~*=<.1pt>{.}(2,-5)};
(0,0)*{};(0,-5)*{}**\crv{~*=<.1pt>{.}(0,-2)};
(5,0)*{};(5,-5)*{}**\crv{~*=<.1pt>{.}(5,-2)};
(10,0)*{};(10,-5)*{}*\crv{~*=<.1pt>{.}(10,-2)};
(15,0)*{};(15,-5)*{}**\crv{~*=<.1pt>{.}(15,-2)};
(20,0)*{};(20,-5)*{}**\crv{~*=<.1pt>{.}(20,-2)};
\endxy};
(0,1)*{\stt{\mathcal {P}}};
\endxy
\]
where we draw Young's lattice in bold as a subposet of $\mathcal {P}$ with Young diagrams in French notation though. Meanwhile, one should observe that

\noindent$\bullet$ As pointed out in Prop. \ref{ycp}, any path $c:\al=\al^0\lessdot\cdots\lessdot\al^n=\ga$ in $\mathcal {L}$ corresponds to a SYCT of skew shape $\ga\pa\al$, namely $\tau$, then $w_\mathcal {L}(c)=(-1)^{\us{\s{inv}}(\rho_\al(\tau))}$. Hence, $f_\mathcal {L}^\al=\sum\limits_{\tau\in\us{\s{SYCT}}(\al)}(-1)^{\us{\s{inv}}(\rho_\al(\tau))}$.

\noindent$\bullet$ The number of paths in $\mathcal {P}$ from $\emptyset$ to $\al$ is just $|\us{SYT}(\tilde{\al})|$. Moreover, we have $f_\mathcal {P}^\al=f_\mathcal {P}^{\tilde{\al}}=\sum\limits_{T\in\us{\s{SYT}}(\tilde{\al})}(-1)^{{\tilde{\al}^T\choose2}+\us{\s{inv}}(T)}$, as Young's lattice embeds in $\mathcal {P}$.

Therefore,
\[\begin{split}
\sum\limits_{\al\vDash n}f_\mathcal {L}^\al f_\mathcal {P}^\al&=
\sum\limits_{\la\vdash n}f_\mathcal {P}^\la\lb\sum\limits_{\tilde{\al}=\la}f_\mathcal {L}^\al\rb=
\sum\limits_{\la\vdash n}(-1)^{\la^T\choose2}\lb\sum\limits_{T\in\us{\s{SYT}}(\la)}(-1)^{\us{\s{inv}}(T)}\rb^2\\
&=\sum\limits_{\la\vdash n}(-1)^{\la^T\choose2}\lb\sum\limits_{T\in\us{\s{SYT}}(\la)}\us{sign}(T)\rb^2=\de_{n,0},
\end{split}\]
back to the  identity (\ref{ide}).
\end{example}

\begin{example}
Next we consider the $q$-dual graded graphs corresponding to the dual pair $\ms{MR}_q$ and $\ms{MR}'_q$. Choosing the canonical self-dual basis $\{w\}_{w\in\mathfrak{S}}$ and the right multiplication of the primitive elements $\al=\be=1\in\mathfrak{S}_1$, we have
\[\begin{array}{l}
m(u,w)=\lan w,u*1\ran=\de_{u,\us{\s{st}}(w_1\cdots w_{n-1})},\\
m'(u,w)=\lan u*'_q 1,w\ran=q^{n-w^{-1}(n)}\de_{u,w|_{\{1,\dots,n-1\}}},
\end{array}
\]
where $n=|w|$ and $n-w^{-1}(n)$ is the number of letters behind $n$ in $w$.
Hence, we realize the following $q$-dual graded graphs ($r=1$) as the $q$-analogue of the permutation trees, denoted $(\us{Perm},\us{Perm}')$,
\[\xy 0;/r.35pc/:
(0,0)*{\emptyset};(0,6)*{1};(-8,12)*{12};(8,12)*{21};
(-15,18)*{123};(-9,18)*{132};(-3,18)*{213};
(3,18)*{231};(9,18)*{312};(15,18)*{321};
(0,1.5)*{};(0,4.5)*{}**\dir{-};
(-1,7.5)*{};(-7,10.5)*{}**\dir{-};
(1,7.5)*{};(7,10.5)*{}**\dir{-};
(-9.5,13)*{};(-14,16.5)*{}**\dir{-};
(9.5,13)*{};(14,16.5)*{}**\dir{-};
(-8,13)*{};(-8,16.5)*{}**\dir{-};
(8,13)*{};(8,16.5)*{}**\dir{-};
(6.5,13)*{};(-2.5,16.5)*{}**\dir{-};
(-6.5,13)*{};(2.5,16.5)*{}**\dir{-};
(0,-3)*{\us{Perm}};
\endxy\quad\xy 0;/r.35pc/:
(0,0)*{\emptyset};(0,6)*{1};(-8,12)*{12};(8,12)*{21};
(-15,18)*{123};(-9,18)*{132};(-3,18)*{213};
(3,18)*{231};(9,18)*{312};(15,18)*{321};
(0,1.5)*{};(0,4.5)*{}**\dir{-};+(1,1.5)*{\stt{1}};
(-1,7.5)*{};(-7,10.5)*{}**\dir{-};+(-4,0.5)*{\stt{1}};
(1,7.5)*{};(7,10.5)*{}**\dir{-};+(4,0.5)*{\stt{q}};
(-9.5,13)*{};(-14,16.5)*{}**\dir{-};+(-1,2)*{\stt{1}};
(9.5,13)*{};(14,16.5)*{}**\dir{-};+(3,1)*{\stt{q^2}};
(-8,13)*{};(-8,16.5)*{}**\dir{-};+(1,2)*{\stt{q}};
(8,13)*{};(2.5,16.5)*{}**\dir{-};+(-1,2)*{\stt{q}};
(6.5,13)*{};(-2.5,16.5)*{}**\dir{-};+(-3,0)*{\stt{1}};
(-6.5,13)*{};(8,16.5)*{}**\dir{-};+(5,0)*{\stt{q^2}};
(0,-3)*{\us{Perm}'};
\endxy\]
where the edge weights of $\us{Perm}$ are all 1. Note that for any $w\in\mathfrak{S}$, there exists exactly one path from $\emptyset$ to $w$ in both graphs, and $f^w_{\us{\s{Perm}}}=1, f^w_{\us{\s{Perm}}'}=q^{\ell(w)}$, thus $\sum\limits_{w\in\mathfrak{S}_n}f^w_{\us{\s{Perm}}}f^w_{\us{\s{Perm}}'}
=\sum\limits_{w\in\mathfrak{S}_n}q^{\ell(w)}=(n)_q!$.
\end{example}

\begin{example}
Finally we consider the $q$-dual graded graphs corresponding to the dual pair $\ms{PR}_q$ and $\ms{PR}'_q$. First choose the canonical dual base $\{c_q(T)\}_{T\in\us{\s{SYT}}},~\{c^*_q(T)\}_{T\in\us{\s{SYT}}}$ and the right multiplication of the primitive elements $\al=c_q(\mb{1}),
\be=c^*_q(\mb{1})$, where $\mb{1}:=~\xy 0;/r.15pc/:
(0,0)*{\stt{1}};
(-2.5,2.5)*{};(-2.5,-2.5)*{}**\dir{-};
(2.5,2.5)*{};(2.5,-2.5)*{}**\dir{-};
(-2.5,-2.5)*{};(2.5,-2.5)*{}**\dir{-};
(2.5,2.5)*{};(-2.5,2.5)*{}**\dir{-};
\endxy~\in\us{SYT}_1$. Meanwhile, simply using $\us{SYT}$ to represent the vertex set, we have
\[\begin{array}{l}
m(S,T)=\lan c_q(T),c^*_q(S)*c^*_q(\mb{1})\ran=\sum\limits_{k=1}^nq^{n-k}\de_{T,S_k\leftarrow k},\\
m'(S,T)=\lan c_q(S)*'_q c_q(\mb{1}),c^*_q(T)\ran=\de_{S,T\backslash n},
\end{array}
\]
where $T\in\us{SYT}_n$, $T\backslash n$ is the tableau obtained from $T$ by removing the box occupied by $n$ and $S_k$ is the tableau obtained from $S$ by shifting all its entries greater than or equal to $k$ by 1, followed by the Schensted insertion $S_k\leftarrow k$. Hence, we realize the following $q$-dual graded graphs ($r=1$) as the $q$-analogue of the $\us{SYT}$ trees and Schensted graphs \cite{Fom}, denoted $(\us{Tab},\us{Tab}')$,
\[\xy 0;/r.35pc/:
(0,0)*{\stt{\emptyset}};(0,6)*{\stt{1}};(-5,12)*{\xy 0;/r.1pc/:(-2,0)*{\stt{1}};(2,0)*{\stt{2}};
\endxy};
(5,12)*{\xy 0;/r.1pc/:(0,-3)*{\stt{2}};(0,3)*{\stt{1}};
\endxy};
(-12,18)*{\xy 0;/r.1pc/:(-4,0)*{\stt{1}};(0,0)*{\stt{2}};(4,0)*{\stt{3}};
\endxy};
(-4,18)*{\xy 0;/r.1pc/:(-2,3)*{\stt{1}};(2,3)*{\stt{2}};(-2,-3)*{\stt{3}};
\endxy};
(4,18)*{\xy 0;/r.1pc/:(-2,3)*{\stt{1}};(2,3)*{\stt{3}};(-2,-3)*{\stt{2}};
\endxy};
(12,18)*{\xy 0;/r.1pc/:(0,6)*{\stt{1}};(0,0)*{\stt{2}};(0,-6)*{\stt{3}};
\endxy};
(0,1.5)*{};(0,4.5)*{}**\dir{-};+(1,1.5)*{\stt{1}};
(-1,7.5)*{};(-4,10.5)*{}**\dir{-};+(-2.5,1)*{\stt{1}};
(1,7.5)*{};(4,10.5)*{}**\dir{-};+(2.5,1)*{\stt{q}};
(-6,13.5)*{};(-11,16.5)*{}**\dir{-};+(-3.5,1)*{\stt{1}};
(-5,13.5)*{};(-4,16.5)*{}**\dir{-};
(-4,13.5)*{};(3,16)*{}**\dir{-};+(2,-0.5)*{\stt{q^2}};
(4,13.5)*{};(-3,16)*{}**\dir{-};+(-2,-0.5)*{\stt{q}};
(6,13.5)*{};(11,16.5)*{}**\dir{-};+(3.5,1)*{\stt{q^2}};
(-5,13.5)*{};(-4,16.5)*{}**\dir{-};
(-5,13.5)*{};(-4,16.5)*{}**\dir{-};+(-0.5,2)*{\stt{q}};
(5,14)*{};(4,16.5)*{}**\dir{-};+(0,2)*{\stt{1}};
(0,-3)*{\us{Tab}};
\endxy\quad
\xy 0;/r.35pc/:
(0,0)*{\stt{\emptyset}};(0,6)*{\stt{1}};(-5,12)*{\xy 0;/r.1pc/:(-2,0)*{\stt{1}};(2,0)*{\stt{2}};
\endxy};
(5,12)*{\xy 0;/r.1pc/:(0,-3)*{\stt{2}};(0,3)*{\stt{1}};
\endxy};
(-12,18)*{\xy 0;/r.1pc/:(-4,0)*{\stt{1}};(0,0)*{\stt{2}};(4,0)*{\stt{3}};
\endxy};
(-4,18)*{\xy 0;/r.1pc/:(-2,3)*{\stt{1}};(2,3)*{\stt{2}};(-2,-3)*{\stt{3}};
\endxy};
(4,18)*{\xy 0;/r.1pc/:(-2,3)*{\stt{1}};(2,3)*{\stt{3}};(-2,-3)*{\stt{2}};
\endxy};
(12,18)*{\xy 0;/r.1pc/:(0,6)*{\stt{1}};(0,0)*{\stt{2}};(0,-6)*{\stt{3}};
\endxy};
(0,1.5)*{};(0,4.5)*{}**\dir{-};
(-1,7.5)*{};(-4,10.5)*{}**\dir{-};
(1,7.5)*{};(4,10.5)*{}**\dir{-};
(-6.5,13.5)*{};(-11,16.5)*{}**\dir{-};
(-5,13.5)*{};(-4,16.5)*{}**\dir{-};
(6.5,13.5)*{};(11,16.5)*{}**\dir{-};
(5,14)*{};(4,16.5)*{}**\dir{-};
(0,-3)*{\us{Tab}'};
\endxy\]
where the edge weights of $\us{Tab}'$ are all 1.

Note that for any $T\in\us{SYT}_n$, every path $c$ from $\emptyset$ to $T$ in $\us{Tab}$ corresponds to a $w\in\mathfrak{S}_n$ such that $m_{\us{\s{Tab}}}(c)=q^{\ell(w)}$, as there exists a Schensted insertion at each step. Hence, $f^T_{\us{\s{Tab}}}=\sum\limits_{P(w)=T}q^{\ell(w)}, f^T_{\us{\s{Tab}}'}=1$, and $\sum\limits_{T\in\us{\s{SYT}}_n}f^T_{\us{\s{Tab}}}f^T_{\us{\s{Tab}}'}
=\sum\limits_{w\in\mathfrak{S}_n}q^{\ell(w)}=(n)_q!$.
\end{example}

%\bibliographystyle{plain}
%\bibliography{symmetric}

\begin{thebibliography}{9999}
\medskip

\bibitem{AgS} M. Aguiar, F. Sottile,
\textit{Structure of the {M}alvenuto-{R}eutenauer
{H}opf algebra of permutations}, Adv. Math. \textbf{191} (2005), 225--275.

%\bibitem{KL} D. Kazhdan, G. Lusztig,
%\textit{Representations of {C}oxeter groups and {H}ecke algebras}, Invent. Math. \textbf{53} (1979), 165--184.

\bibitem{BLW} C. Bessenrodt, K. Luoto and S. van Willigenburg,
\textit{Skew quasisymmetric {S}chur functions and noncommutative {S}chur functions}, Adv. Math. \textbf{226} (2011), 4492--4532.

%\bibitem{GM} A.M. Garsia, T.J. McLarnan,
%\textit{Relations between {Y}oung's natural and
%the {K}azhdan-{L}usztig representation of {$S_n$}}, Adv. Math. \textbf{69} (1988), 32--92.



%\bibitem{Tas} M. Taskin,
%\textit{Properties of four partial orders on standard {Y}oung tableaux}, J. Combin. Theory Ser. A \textbf{113} (2006), 1092--1119.
%
%\bibitem{Ehr} R. Ehrenborg,
%\textit{On posets and {H}opf algebras}, Adv. Math. \textbf{119} (1996), 1--25.

\bibitem{BLL} N. Bergeron, T. Lam and H. Li,
\textit{Combinatorial {H}opf algebras and towers
of algebra-dimension, quantization
and functorality}, Algebr Represent Theor \textbf{15} (2012), 675--696.

\bibitem{DHT} G. Duchamp, F. Hivert and J.-Y. Thibon,
\textit{Noncommutative symmetric functions {VI}: {F}ree quasi-symmetric functions
and related algebras}, Internat. J. Algebra Comput. \textbf{12} (2002), 671--717.

\bibitem{EK} A.P. Ellis, M. Khovanov,
\textit{The {H}opf algebra of odd symmetric functions}, Adv. Math. \textbf{231} (2012), 965--999.

\bibitem{EKL} A.P. Ellis, M. Khovanov and A. Lauda,
\textit{The odd nilhecke algebra and its diagrammatics}, arXiv:1111.1320.

\bibitem{Ell} A.P. Ellis,
\textit{The odd Littlewood-Richardson rule}, J. Alg. Combin. \textbf{37} (2012), 777--799.

\bibitem{Fom} S. Fomin,
\textit{Duality of graded graphs}, J. Alg. Combin. \textbf{3} (1994), 357--404.


\bibitem{Ful} W. Fulton, \textit{Young tableaux: {W}ith applications to representation theory and geometry}, Cambridge University Press, London Mathematical Society Student Text, \textbf{35} (1997)

\bibitem{Ge} I. Gessel,
\textit{Multipartite {P}-partitions and inner products of skew {S}chur functions}, Contemp. Math. \textbf{34} (1984), 289--301.

\bibitem{GKL} I. M. Gelfand, D. Krob, A. Lascoux, B. Leclerc, V. Retakh and J.-I. Thibon, \textit{Noncommutative
symmetric functions}, Adv. Math. \textbf{122} (1995), 218--348.

\bibitem{HLMW} J. Haglund, K. Luoto, S. Mason and S. van Willigenburg,
\textit{Refinements of the {L}ittlewood-{R}ichardson rule}, Trans. Amer.
Math. Soc. \textbf{363} (2011), 1665--1686.

\bibitem{HLMW1} J. Haglund, K. Luoto, S. Mason and S. van Willigenburg,
\textit{Quasisymmetric {S}chur functions}, J. Combin. Theory
Ser. A, \textbf{118} (2011), 463--490.

\bibitem{Hof} M.E. Hoffman,
\textit{Quasi-shuffle products}, J. Alg. Combin. \textbf{11} (2000), 49--68.

\bibitem{JRZ} R.-Q. Jian, M. Rosso and J. Zhang,
\textit{Quantum quasi-shuffle algebras}, Lett. Math. Phys.,
\textbf{92} (2010), 1--6.

\bibitem{KT} D. Krob, J.-Y. Thibon,
\textit{Noncommutative symmetric functions
{IV}: quantum linear groups
and {H}ecke algebras at $q=0$}, J. Alg. Combin.
\textbf{6} (1997), 339--376.

\bibitem{Lam} T. Lam,
\textit{Quantized dual graded graphs}, Electron. J. Comb. \textbf{17} (2010)

\bibitem{Lam1} T. Lam,
\textit{Signed differential posets and sign-imbalance}, J. Combin. Theory
Ser. A, \textbf{115} (2008), 466--484.

\bibitem{LMW} K. Luoto, S. Mykytiuk and S. van Willigenburg, \textit{An introduction to
quasisymmetric {S}chur functions}, http://www.math.ubc.ca/~steph/papers/LMvW-QuasiSchurBook.pdf

%\bibitem{JR} R.-Q. Jian, M. Rosso, \textit{Braided cofree Hopf algebras and quantum multi-brace algebras}, J. reine angew. Math., \textbf{667} (2012), 193--220.
%
%\bibitem{LR} J.-L. Loday, M. Ronco, \textit{On the structure of cofree Hopf algebras}, J. reine angew. Math., \textbf{592} (2006), 123--155.

%\bibitem{BL} N. Bergeron, H. Li,
%\textit{Algebraic structures on Grothendieck groups of a tower of algebras}, J. Algebra, \textbf{321} (2009), 2068--2084.

%\bibitem{GG} Jorge A. Guccione, Juan J. Guccione,
%\textit{Theory of braided Hopf crossed products}, J. Algebra, \textbf{261} (2003), 54--101.

\bibitem{Lus} G. Lusztig, \textit{Introduction to {Q}uantum {G}roups}, Birkh$\ddot{\mbox{a}}$user, Boston, MA, Progr. Math., \textbf{110} (1993)

\bibitem{Mas} S. Mason,
\textit{A decomposition of {S}chur functions and an analogue
 of the {R}obinson-{S}chensted-{K}nuth algorithm}, S$\acute{\mbox{e}}$m. Lothar. Combin. \textbf{57} (2006)

\bibitem{Mon}, Susan Montgomery,
\textit{Hopf {A}lgebras and {T}heir {A}ctions on {R}ings}, Amer. Math. Soc., Regional Conf. Ser. in Math.,
\textbf{82} (1993)

\bibitem{MR} C. Malvenuto, C. Reutenauer,
\textit{Duality between quasi-symmetric functions and the {S}olomon descent algebra}, J. Algebra, \textbf{177} (1995), 967--982.

\bibitem{PR} S. Poirier, C. Reutenauer,
\textit{Hopf algebras of tableaux ({A}lg{$\grave{\mbox{e}}$}bres de {H}opf de tableaux)}, Ann. Sci. Math. Qu{$\acute{\mbox{e}}$}bec, \textbf{19} (1995), 79--90.

\bibitem{Rei} Astrid Reifegerste,
\textit{Permutation sign under the {R}obinson-{S}chensted correspondenc}, J. Ann. Comb. \textbf{8} (2004), 103--112.

\bibitem{Sag} B. Sagan,
\textit{The {S}ymmetric {G}roup. {R}epresentations,
{C}ombinatorial {A}lgorithms, and {S}ymmetric {F}unctions, second
ed.}, Springer-Verlag, \textbf{37} (2001)

\bibitem{Sta} R. Stanley,
\textit{Differential posets}, J. Amer. Math. Soc. \textbf{1} (1988), 919--961.

\bibitem{Stan}, R. Stanley, \textit{Enumerative {C}ombinatorics, vol. 2}, Cambridge University Press, 1999



\bibitem{TU} J.-Y. Thibon, B.-C.-V. Ung.,
\textit{Quantum quasi-symmetric functions and {H}ecke algebras}, J. Phys. A,
\textbf{29} (1996), 7337--7348.

\end{thebibliography}

\bigskip
\bibliographystyle{amsalpha}

\clearpage
\end{document}